\newtheorem{theorem}{Theorem}[section]
\newtheorem{proposition}[theorem]{Proposition}
\newtheorem{lemma}[theorem]{Lemma}
\newtheorem{corollary}[theorem]{Corollary}
\newtheorem{D}[theorem]{Definition}
\newenvironment{definition}{\begin{D} \rm }{\end{D}}
\newtheorem{R}[theorem]{Remark}
\newenvironment{remark}{\begin{R}\rm }{\end{R}}
\newtheorem{E}[theorem]{Example}
\def\Zee{\mathbb{Z}}
\def\Q{\mathbb{Q}}
\def\Ar{\mathbb{R}}
\def\Cee{\mathbb{C}}
\def\Pee{\mathbb{P}}
\def\Id{\operatorname{Id}}
\def\Ker{\operatorname{Ker}}
\def\Coker{\operatorname{Coker}}
\def\Hom{\operatorname{Hom}}
\def\Ext{\operatorname{Ext}}
\def\im{\operatorname{Im}}
\def\scrO{\mathcal{O}}
\title{The $\partial\bar{\partial}$-lemma for general Clemens manifolds}
\dedicatory{Dedicated to Sir Simon Donaldson for his $60^\text{th}$  birthday}
\author{Robert Friedman}
\address{Department of Mathematics\\
Columbia University\\
New York, NY 10027}
\email{rf@math.columbia.edu}
\begin{document}

\subjclass[2010]{14J32, 32G20, 32J17}
\date{\today}

\begin{abstract}
We show that the $\partial\bar{\partial}$-lemma holds for the non-K\"ahler compact complex manifolds of dimension three with trivial canonical bundle constructed by Clemens as deformations of Calabi-Yau threefolds contracted along smooth rational curves with normal bundle of type $(-1, -1)$, at least on an open dense set in moduli. The  proof uses the mixed Hodge structure on the singular fibers and an analysis of the variation of the Hodge filtration for the smooth fibers.
\end{abstract}
\maketitle

\section*{Introduction}

Around 1985, Herb Clemens gave a remarkable construction of compact complex manifolds of dimension three and trivial canonical bundle as follows. Let $X$ be a Calabi-Yau threefold, for example a quintic threefold in $\Pee^4$, and let $C_1, \dots, C_r$ be disjoint smooth rational curves in $X$ such that the normal bundle $N_{C_i/X} \cong \scrO_{\Pee^1}(-1) \oplus \scrO_{\Pee^1}(-1)$ for all $i$,  and such that the classes $[C_1], \dots , [C_r]$ satisfy a linear relation $\sum_im_i[C_i] =0$ in $H^4(X;\Cee)$ with all $m_i\neq 0$ and span $H^4(X;\Cee)$. If $\overline{X}$ is the singular compact complex threefold obtained by contracting the $C_i$ to ordinary double points, then $\overline{X}$ is smoothable, and small smoothings of $\overline{X}$ are compact complex manifolds of dimension three with second Betti number $b_2=0$ and trivial canonical bundle. We will  call any complex manifold obtained in this way a \textsl{Clemens manifold}. If for example $X$ is simply connected and the classes $[C_1], \dots , [C_r]$ generate $H^4(X;\Zee)$, then small smoothings of $\overline{X}$ are diffeomorphic to a connected sum of copies of $S^3\times S^3$. Moreover,  the number $r$ of  curves $C_i$ required in the construction can be arbitrarily large, giving examples of an infinite number of topologically different families of Clemens manifolds. Details of Clemens' construction were given in \cite{Friedman1986}, and the construction was subsequently generalized by Tian \cite{Tian}, Kawamata \cite{Kawamata}, and Ran \cite{Ran}, to the case where the classes $[C_i]$ do not necessarily span $H^4(X;\Cee)$.

Given the very simple topological nature of  Clemens manifolds, it is tempting to speculate that they play a fundamental role in describing the moduli of Calabi-Yau threefolds, see for example Reid \cite{Reid}. It is also natural to ask if their  cohomology in dimension three carries a polarized weight three Hodge structure. While it is easy to see that the Hodge-de Rham spectral sequence degenerates  at $E_1$ (and we recall this argument in the proof of Theorem~\ref{MHS2} below), it is not obvious that the resulting filtrations $F{}^\bullet$ and $\overline{F}^\bullet$ on $H^k$ are $k$-opposed, or equivalently that the $\partial\bar{\partial}$-lemma holds (despite the careless statement on p.\ 107 of \cite{Friedman1991}).  The goal of this paper is to show that indeed the $\partial\bar{\partial}$-lemma holds for a general Clemens manifold. Here general roughly means that the $\partial\bar{\partial}$-lemma holds outside of a proper real analytic subvariety, although it seems likely that in fact it holds for all small smoothings of $\overline{X}$. Unfortunately, the variational methods of this paper do not seem well suited to deciding if the resulting weight three Hodge structures are \textit{polarized}.  Of course, it is a general fact that on a compact complex threefold, if $\omega \in H^0(\Omega^3)$ is nonzero, then $\sqrt{-1}\langle \omega , \bar{\omega}\rangle >0$, where $\langle \cdot, \cdot \rangle$ is the usual pairing on $H^3$. But the remaining Hodge-Riemann inequality for Clemens manifolds, that the Hermitian form on $H^{2,1}$ defined by $\sqrt{-1}\langle \eta , \bar{\eta}\rangle$ is negative definite, seems more difficult to establish. 

One can also ask if there are good metrics on Clemens manifolds whose existence would imply the existence of a Hodge decomposition, and, even better, the Hodge-Riemann inequalities. Results of Fu-Li-Yau \cite{FuLiYau} show the existence of \textsl{balanced metrics} on Clemens manifolds. These are metrics such that the square of the  associated K\"ahler form is $d$-closed (in the case of complex dimension three). However, in general the existence of a balanced metric is not sufficient to imply that the $\partial\bar{\partial}$-lemma holds. 

Fine and Panov \cite{FinePanov} have constructed a complex structure with trivial canonical bundle on $2(S^3\times S^3)\#(S^2\times S^4)$. If $X$ is the corresponding compact complex threefold, they show in addition that there is a nontrivial holomorphic vector field on $X$, i.e.\ that $H^0(X; T_X) \neq 0$. It follows easily that the $\partial\bar{\partial}$-lemma does not hold for $X$. It would be interesting to know if the deformations of $X$ are obstructed, or if there are small deformations of $X$ to a complex manifold for which the $\partial\bar{\partial}$-lemma holds.   

This paper is organized as follows. In Section 1, we collect some general results about Hodge structures and the $\partial\bar{\partial}$-lemma. Section 2 deals with the deformation theory of threefolds with  ordinary double points and trivial dualizing sheaf, as well as the limiting mixed Hodge structures associated to  their smoothings. While all of this material is very well-known to specialists, we give the arguments in some detail to emphasize that it is enough to assume only that a resolution of the singular fiber satisfies the $\partial\bar{\partial}$-lemma. We could replace this assumption by the assumption that  a resolution of the singular fiber is K\"ahler, at the cost of increasing the notational complexity of the argument (see Remark~\ref{inductiveremark}, where we argue that it is enough to consider the case where there is just one smoothing direction in the deformation space). The main calculation is given in Section 3, where we use the nature of the monodromy and the variation of the Hodge filtration to show that the $\partial\bar{\partial}$-lemma holds on a nonempty open subset of the deformation space of smoothings of the singular fiber. Section 4 deals with a question on deformations of compact complex manifolds satisfying the $\partial\bar{\partial}$-lemma, which I first learned of from  Yau.

\medskip
\noindent\textit{Acknowledgements.} It is a pleasure to thank Melissa Liu, Valentino Tosatti, and Shing-Tung Yau for very helpful correspondence, and Joel Fine for several interesting discussions. I would also like to thank Junho Won for his careful reading of a preliminary version of this paper and for many helpful comments.

\section{Some preliminary remarks} 

We begin with a definition of the statement that the $\partial\bar{\partial}$-lemma holds for a compact complex manifold $V$ and its link with the existence of a Hodge structure on the cohomology of $V$.

\begin{definition} Let $V$ be a compact complex manifold and let $A^k(V)$ denote the space
of $C^{\infty }$ $k$-forms on $V$. We say that the
\textsl{$\partial \bar{\partial }$-lemma holds for $V$} if, for all
$k$, and all $\eta \in A^k(V)$ such that $\partial\eta = \bar\partial\eta = 0$, the following
property holds: the form $\eta $ is $d$-exact, i.e.\ there exists a form
$\xi $ such that $\eta =d\xi $ $\iff $ there exists an
$\alpha \in A^{k-2}(V)$ such that
$\eta = \partial \bar{\partial }\alpha $.
\end{definition}

We then have the following \cite[(5.21)]{DGMS} (for the direction (i) $\implies$ (ii), see also \cite[(4.3.1)]{DeligneHodgeII}):

\begin{theorem} Let $V$ be a compact complex manifold. Then the following conditions are equivalent:
\begin{itemize}
\item[\rm(i)] The Hodge-de Rham spectral sequence for $V$ degenerates at $E_1$ and,  for all $k$, if $F^\bullet$ is the corresponding filtration on $H^k(V;\Cee)$, then $F^\bullet$ and $\overline{F}^\bullet$ are $k$-opposed, i.e.\ for all $p$, there is an isomorphism
$$F^p\oplus \overline{F}^{k-p+1}\cong H^k(V;\Cee)$$
induced by the natural inclusions.
\item[\rm(ii)] The $\partial\bar{\partial}$-lemma holds for $V$. \qed
\end{itemize}
\end{theorem}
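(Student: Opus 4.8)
\medskip
\noindent\textit{Proof strategy.} I would treat the two implications separately; in each the mechanism is the interplay between the type decomposition of forms on $V$ and the filtrations $F^\bullet$, $\overline{F}^\bullet$ of $H^k(V;\Cee)$ by holomorphic, resp.\ antiholomorphic, degree.

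For (ii) $\Longrightarrow$ (i), I would use throughout the standard equivalent reformulations of the $\partial\bar\partial$-lemma (the ``principle of two types'' of \cite[\S5]{DGMS}): for instance, a $d$-closed form of pure type which is $\partial$-exact or $\bar\partial$-exact is $\partial\bar\partial$-exact. Put $H^{p,q}=\big(\ker d\cap A^{p,q}(V)\big)/\partial\bar\partial A^{p-1,q-1}(V)$. The goal is to show the natural map $\bigoplus_{p+q=k}H^{p,q}\to H^k(V;\Cee)$ is an isomorphism. For surjectivity, given a $d$-closed $k$-form $\eta=\sum_j\eta^{k-j,j}$, the top component $\eta^{k,0}$ is $\partial$-closed, so $\bar\partial\eta^{k,0}=d\eta^{k,0}$ is $d$-closed and $d$-exact of pure type, hence equals $\partial\bar\partial\tau$; replacing $\eta$ by $\eta+d\tau$ makes $\eta^{k,0}$ $d$-closed, and iterating on $\eta^{k-1,1},\eta^{k-2,2},\dots$ --- each correction leaving the components already treated undisturbed --- exhibits $\eta$ as $d$-cohomologous to a sum of $d$-closed forms of pure type. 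For injectivity, if $\sum_j\zeta^{k-j,j}=d\mu$ with each $\zeta^{k-j,j}$ $d$-closed, then from $\zeta^{k-j,j}=\partial\mu^{k-1-j,j}+\bar\partial\mu^{k-j,j-1}$ and $\partial\zeta^{k-j,j}=0$ one sees that $\bar\partial\mu^{k-j,j-1}$ is $\partial$-closed and $\bar\partial$-exact, hence $\partial\bar\partial$-exact; thus $\zeta^{k-j,j}$ is $\partial$-exact and, being $\bar\partial$-closed, lies in $\partial\bar\partial A^{k-j-1,j-1}(V)$. One then identifies $H^{p,q}$ with the Dolbeault group $H^q(V,\Omega^p_V)$: the ensuing dimension count forces $E_1$-degeneration of the Hodge--de Rham spectral sequence, while the same peeling shows $F^pH^k=\bigoplus_{a\ge p}H^{a,k-a}$ and $\overline{F}^qH^k=\bigoplus_{b\ge q}H^{k-b,b}$, whence $F^p\oplus\overline{F}^{k-p+1}=H^k(V;\Cee)$.

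For (i) $\Longrightarrow$ (ii), degeneration of the Hodge--de Rham spectral sequence at $E_1$ is equivalent to strictness of $d$ with respect to $F^\bullet$ --- i.e.\ every $d$-exact form lying in $F^pA^k(V)$ is $d$ of a form in $F^pA^{k-1}(V)$ --- and, $\overline{F}^\bullet$ being the complex-conjugate filtration, $d$ is strict for $\overline{F}^\bullet$ as well. Let $\eta\in A^{p,q}(V)$ be $d$-closed and $d$-exact. Since $\eta\in F^pA^{p+q}(V)\cap\overline{F}^qA^{p+q}(V)$, strictness for the two filtrations gives $\eta=d\beta=d\gamma$ with $\beta\in F^pA^{p+q-1}(V)$ and $\gamma\in\overline{F}^qA^{p+q-1}(V)$; comparing the components of type $(p,q)$ yields $\eta=\bar\partial\beta_0=\partial\gamma_0$ with $\beta_0$ of type $(p,q-1)$ and $\gamma_0$ of type $(p-1,q)$. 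I would then induct on the product $pq$; assume $p\le q$, the case $q\le p$ being symmetric. The form $\bar\partial\gamma_0$, of type $(p-1,q+1)$, is $d$-closed and $d$-exact, and $(p-1)(q+1)<pq$, so by the inductive hypothesis (trivially if $p=1$, where $\bar\partial\gamma_0=0$) it is $\partial\bar\partial$-exact; hence, after modifying $\gamma_0$ by a $\partial$-exact form, we may assume in addition that $\gamma_0$ is $\bar\partial$-closed, still with $\partial\gamma_0=\eta$. Finally, $E_1$-degeneration together with $(p+q-1)$-oppositeness lets one replace the Dolbeault class of $\gamma_0$ in $H^q(V,\Omega^{p-1}_V)$ by a $d$-closed form $\chi$ of pure type $(p-1,q)$ representing the corresponding class of the Hodge summand, so $\gamma_0=\chi+\bar\partial\rho$ for some $\rho\in A^{p-1,q-1}(V)$; as $\chi$ is $d$-closed of pure type, $\partial\chi=0$, and therefore $\eta=\partial\gamma_0=\partial\bar\partial\rho$.

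The main obstacle, I expect, is this last step of (i) $\Longrightarrow$ (ii): showing that under $E_1$-degeneration and $k$-oppositeness every class in the Hodge piece $F^pH^k\cap\overline{F}^qH^k$ has a representative that is a $d$-closed form of pure type $(p,q)$. This is precisely where oppositeness is used essentially --- $E_1$-degeneration alone furnishes only representatives lying in $F^pA^\bullet$ --- and reconciling such a representative with one lying in $\overline{F}^qA^\bullet$ calls for careful bidegree bookkeeping; it is the technical heart of the matter. Everything else is formal manipulation inside the Dolbeault bicomplex.
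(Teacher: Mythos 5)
The paper does not prove this theorem; it quotes it from \cite[(5.21)]{DGMS} and \cite[(4.3.1)]{DeligneHodgeII}, so your proposal has to stand on its own. Your argument for (ii) $\implies$ (i) is the standard one and is essentially correct: the inductive purification of a $d$-closed form into a sum of $d$-closed pieces of pure type, the identification of $H^{p,q}$ with $H^q(V;\Omega^p_V)$, and the dimension count forcing $E_1$-degeneration and oppositeness all go through, granted the standard equivalent reformulations of the $\partial\bar{\partial}$-lemma from \cite{DGMS} that you invoke at the outset.

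The direction (i) $\implies$ (ii), however, has a genuine gap, and it sits exactly where you locate ``the technical heart of the matter.'' Your final step asserts that degeneration plus oppositeness allow you to write the $\bar\partial$-closed form $\gamma_0$ of type $(p-1,q)$ as $\gamma_0=\chi+\bar\partial\rho$ with $\chi$ $d$-closed of pure type. But for this particular $\gamma_0$ that assertion is \emph{equivalent} to the conclusion you are trying to prove: if $\gamma_0=\chi+\bar\partial\rho$ with $d\chi=0$, then $\eta=\partial\gamma_0=\partial\bar\partial\rho$; and conversely, if $\eta=\partial\bar\partial\rho$, then $\chi:=\gamma_0-\bar\partial\rho$ is $d$-closed (using $\bar\partial\gamma_0=0$) and exhibits the desired decomposition. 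So the argument is circular at its key point. Nor is the missing lemma routine: $E_1$-degeneration by itself only guarantees that the Dolbeault class of $\gamma_0$ is represented by the $(p-1,q)$-component of some $d$-closed form lying in $F^{p-1}A^{p+q-1}(V)$, i.e.\ a form with components in every type $(a,\,p+q-1-a)$ with $a\ge p-1$; discarding the components of higher holomorphic degree is precisely what oppositeness must accomplish, and doing so is the entire content of this direction. The actual proofs (Deligne's lemma on two filtrations in \cite{DeligneHodgeII}, or the double-complex analysis behind \cite[(5.15), (5.17)]{DGMS}) work differently: one analyzes the bifiltered double complex $(A^{\bullet,\bullet}(V),\partial,\bar\partial)$ directly and shows that degeneration of both spectral sequences together with $k$-oppositeness of the induced filtrations forces the vanishing of all the ``zigzag'' constituents, which is equivalent to the $\partial\bar{\partial}$-lemma. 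Your induction on $pq$ is a sensible framework, but as written it does not close.
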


If either of the above conditions hold, then we define
$$H^{p,q}(V)  = F^p\cap \overline{F}^q\subseteq H^{p+q}(V;\Cee).$$
 Equivalently, by \cite[(5.4)(i)]{DeligneLefschetz}, 
$H^{p,q}(V)$ is the set of $\alpha \in H^{p+q}(V;\Cee)$ such that  there exists a de Rham representative for $\alpha$ of type $(p,q)$. Then we have the usual Hodge decomposition
$$H^k(V;\Cee) =\bigoplus_{p+q=k}H^{p,q}(V), \qquad \text{ and } \overline{H ^{p,q}(V)}= H^{q,p}(V).$$

\begin{remark}\label{Remark1.3} (i) The conditions that the Hodge-de Rham spectral sequence for $V$ degenerates at $E_1$ and that,  for all $k$,   the filtrations  $F^\bullet$ and $\overline{F}^\bullet$ are $k$-opposed are both open conditions. Hence the condition that the $\partial\bar{\partial}$-lemma holds for $V$ is an open condition. 

\smallskip 
\noindent (ii) Let $V$ be a compact complex manifold for which the $\partial\bar{\partial}$-lemma holds. It does not seem to be clear if this property is inherited by a closed holomorphic submanifold $Z$. However, if $Z$ is a closed holomorphic submanifold  of $V$ and the $\partial\bar{\partial}$-lemma holds for $Z$, then it is easy to see that the inclusion and the Gysin homomorphism are morphisms of Hodge structures. More generally if $Z$ is a compact complex  manifold such that  the $\partial\bar{\partial}$-lemma holds for $Z$, and $f\colon Z\to V$ is   a holomorphic map, then $f^*$ and $f_*$ are morphisms of Hodge structures (with the appropriate shift in the case of $f_*$ or the Gysin homomorphism). This follows since clearly $f^*H^{p,q}(V) \subseteq H^{p,q}(Z)$ and because the Poincar\'e duality isomorphism $(H^k(V;\Q))^*\cong H^{2n-k}(V;\Q) \otimes \Q(n)$ is an isomorphism of Hodge structures. 

\smallskip 
\noindent (iii) Let $V$ be a compact complex manifold for which the $\partial\bar{\partial}$-lemma holds, and let $Z$ be a submanifold of $V$ for which the $\partial\bar{\partial}$-lemma also holds. If $\rho\colon \widetilde{V} \to V$ is the blowup of $V$ along $Z$, then  the $\partial\bar{\partial}$-lemma holds for $\widetilde{V}$. There have been a number of recent preprints which address this issue \cite{RYY}, \cite{ASTT}, \cite{Stelzig}.  The main point is to show that the Hodge-de Rham spectral sequence degenerates at the $E_1$ page. This can be done by an examination of the Leray spectral sequence $E_2^{p,q} = H^p(V; R^q\rho_*\Omega^k_{\widetilde{V}}) \implies H^{p+q}(\widetilde{V};\Omega^k_{\widetilde{V}})$ and the well-known computation of the Betti numbers of $\widetilde{V}$. In fact, the Leray spectral sequence above  additionally degenerates at the $E_2$ page. The above results then  follow easily from a computation due to Gros \cite[IV Th\'eor\`eme 1.2.1]{Gros}; compare also \cite[Proposition (3.3)]{GNV}. 
\end{remark}

\begin{lemma}\label{prelimlemma1} Let $V$ be a compact complex manifold of dimension $d$ for which the Hodge-de Rham spectral sequence degenerates at $E_1$ and let $F^\bullet$ be the corresponding filtration on $H^d(V;\Cee)$. Then $F^\bullet$ is isotropic for cup product, in the sense that, for all $k$,  $(F^k)^\perp = F^{d-k+1}$. 
\end{lemma}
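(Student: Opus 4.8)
The plan is to prove, for each $k$, the two inclusions $F^{d-k+1}\subseteq (F^k)^\perp$ and $(F^k)^\perp\subseteq F^{d-k+1}$ (here and below all superscripts refer to the filtration on $H^d(V;\Cee)$): the first by a formal vanishing at the level of complexes, and the second by a dimension count. I would set things up using the holomorphic de Rham complex: by the holomorphic Poincar\'e lemma $\Omega^\bullet_V$ resolves the constant sheaf $\Cee_V$, so $H^k(V;\Cee)=\mathbb{H}^k(V,\Omega^\bullet_V)$, and by definition $F^kH^d(V;\Cee)$ is the image of $\mathbb{H}^d(V,\Omega^{\geq k}_V)$ under the map induced by the inclusion of the stupid truncation $\Omega^{\geq k}_V\hookrightarrow\Omega^\bullet_V$. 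Degeneration at $E_1$ says that the graded pieces of $F^\bullet$ on $H^d(V;\Cee)$ are $E_1^{p,d-p}=H^{d-p}(V,\Omega^p_V)$, so that $\dim F^kH^d(V;\Cee)=\sum_{p\geq k}h^{p,d-p}$, where $h^{p,q}:=\dim H^q(V,\Omega^p_V)$; in particular $\dim H^d(V;\Cee)=\sum_{p=0}^{d}h^{p,d-p}$.

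For the inclusion $F^{d-k+1}\subseteq (F^k)^\perp$: the cup product on $H^\bullet(V;\Cee)$ is induced by the wedge product $\Omega^\bullet_V\otimes_\Cee\Omega^\bullet_V\to\Omega^\bullet_V$, which as a morphism of complexes respects the stupid filtrations, carrying $\Omega^{\geq k}_V\otimes\Omega^{\geq k'}_V$ into $\Omega^{\geq k+k'}_V$. Taking $k'=d-k+1$ and using $\Omega^{\geq d+1}_V=0$ (as $\dim_\Cee V=d$), we conclude that cup product sends $F^kH^d(V;\Cee)\otimes F^{d-k+1}H^d(V;\Cee)$ into $F^{d+1}H^{2d}(V;\Cee)=0$. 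Hence $F^kH^d\cup F^{d-k+1}H^d=0$ inside $H^{2d}(V;\Cee)\cong\Cee$, i.e.\ $F^{d-k+1}\subseteq (F^k)^\perp$.

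For the reverse inclusion I would compare dimensions. Since $V$ is a compact, hence oriented, complex manifold, Poincar\'e duality makes the cup product pairing $H^d(V;\Cee)\times H^d(V;\Cee)\to H^{2d}(V;\Cee)\cong\Cee$ nondegenerate, so $\dim (F^k)^\perp=\dim H^d(V;\Cee)-\dim F^kH^d(V;\Cee)=\sum_{p=0}^{k-1}h^{p,d-p}$. On the other hand, re-indexing via $j=d-p$, $\dim F^{d-k+1}H^d(V;\Cee)=\sum_{p=d-k+1}^{d}h^{p,d-p}=\sum_{j=0}^{k-1}h^{d-j,j}$, and Serre duality on $V$ identifies $H^j(V,\Omega^{d-j}_V)$ with the dual of $H^{d-j}(V,\Omega^j_V)$ — one uses here $\Lambda^{d-j}T_V\cong\Omega^j_V\otimes K_V^{-1}$, so that $(\Lambda^{d-j}T_V)\otimes K_V\cong\Omega^j_V$ — whence $h^{d-j,j}=h^{j,d-j}$ and $\dim F^{d-k+1}H^d(V;\Cee)=\sum_{j=0}^{k-1}h^{j,d-j}=\dim (F^k)^\perp$. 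Together with the previous paragraph this gives $(F^k)^\perp=F^{d-k+1}$, as desired.

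The one place that demands care — and what I would single out as the main (if modest) obstacle — is the dimension bookkeeping just performed: it must rely on Serre duality, $h^{p,q}=h^{d-p,d-q}$, valid on any compact complex manifold, and not on the Hodge symmetry $h^{p,q}=h^{q,p}$, which is not available here (its failure being essentially the phenomenon the rest of the paper grapples with). All the other steps are formal consequences of $E_1$-degeneration and Poincar\'e duality.
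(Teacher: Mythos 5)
Your proposal is correct and follows essentially the same route as the paper: the inclusion $F^{d-k+1}\subseteq (F^k)^\perp$ by multiplicativity of the filtration under cup product, and then equality by a dimension count combining Poincar\'e duality, $E_1$-degeneration, and Serre duality $h^{p,q}=h^{d-p,d-q}$ (not Hodge symmetry), exactly as you emphasize. The only cosmetic difference is that you establish the first inclusion via the stupid filtration on the holomorphic de Rham complex, whereas the paper argues with closed $C^\infty$ de Rham representatives lying in $\bigoplus_{\ell\geq k}A^{\ell,d-\ell}(V)$.
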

\begin{proof} First, we claim that $F^{d-k+1}\subseteq (F^k)^\perp$. Every element of $F^k$ has a de Rham representative  $\eta$ with $d\eta =0$ and 
$$\eta = \sum_{\ell\geq k}\eta^{\ell, d-\ell}, \qquad \eta^{\ell, d-\ell} \in A^{\ell, d-\ell}(V),$$
and similarly for elements of  $F^{d-k+1}$. Thus, if $\xi\in F^k$ and $\xi'\in F^{d-k+1}$, then $\xi\smile \xi'$ has a de Rham representative which is a sum of forms of type $(d+a, d-a)$, $a\geq 1$, and hence is $0$, so that  $\xi\smile \xi'=0$.

Since $V$ has dimension $d$, Kodaira-Serre duality implies that 
$$\dim H^i(V;\Omega^{d-i}_V) = \dim H^{d-i}(V;\Omega^i_V).$$
It is then easy to see that $F^{d-k+1}$ and $(F^k)^\perp$ have the same dimension. Since $F^{d-k+1}\subseteq (F^k)^\perp$, we must have $F^{d-k+1}= (F^k)^\perp$.
\end{proof}

\begin{lemma}\label{prelimlemma21}  Let $V$ be a compact complex manifold of dimension $d$ for which the Hodge-de Rham spectral sequence degenerates at $E_1$. Then the natural map
$$F^1 \oplus \overline{F}^d\to H^d(V;\Cee)$$
is an isomorphism, and hence so is the map $F^d \oplus \overline{F}^1\to H^d(V;\Cee)$. 
\end{lemma}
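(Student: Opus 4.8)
The plan is to prove that the natural map $F^1\oplus\overline F^d\to H^d(V;\Cee)$ is injective and that its source and target have the same dimension; the companion statement for $F^d\oplus\overline F^1$ then follows by applying complex conjugation, using that $H^d(V;\Cee)=H^d(V;\Ar)\otimes_\Ar\Cee$ is stable under conjugation and that conjugation interchanges $F^\bullet$ and $\overline F^\bullet$ (so $\overline{\overline F^d}=F^d$).

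For the dimension count I would invoke degeneration at $E_1$: it gives $\dim F^p/F^{p+1}=\dim H^{d-p}(V;\Omega^p_V)$ for all $p$, and since $F^0=H^d(V;\Cee)$ this yields $\dim H^d(V;\Cee)-\dim F^1=\dim(F^0/F^1)=\dim H^d(V;\scrO_V)$. On the other side $\dim\overline F^d=\dim F^d=\dim H^0(V;\Omega^d_V)$, and Kodaira--Serre duality identifies $\dim H^0(V;\Omega^d_V)$ with $\dim H^d(V;\scrO_V)$. Hence $\dim F^1+\dim\overline F^d=\dim H^d(V;\Cee)$, and, by the same token applied to the conjugate filtration, $\dim F^d+\dim\overline F^1=\dim H^d(V;\Cee)$ as well.

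For injectivity it is enough to prove $F^1\cap\overline F^d=0$. Here $F^d$ is the bottom step of the Hodge filtration on $H^d(V;\Cee)$, i.e.\ the image of $H^0(V;\Omega^d_V)$ (equivalently, the classes admitting a de Rham representative of pure type $(d,0)$, necessarily a holomorphic $d$-form); consequently every element of $\overline F^d$ is of the form $[\bar\omega]$ with $\omega\in H^0(V;\Omega^d_V)$. Suppose $[\bar\omega]\in F^1$. By Lemma~\ref{prelimlemma1} with $k=d$ one has $(F^d)^\perp=F^1$ for the cup product pairing on $H^d(V;\Cee)$, so $[\bar\omega]$ pairs to zero with $[\omega]\in F^d$; that is, $\int_V\bar\omega\wedge\omega=0$. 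But $\bar\omega\wedge\omega$ is $\pm\,\omega\wedge\bar\omega$, and for a holomorphic $d$-form the form $\sqrt{-1}^{\,d^2}\,\omega\wedge\bar\omega$ is a pointwise nonnegative multiple of the volume form, strictly positive wherever $\omega\neq0$; hence $\int_V\bar\omega\wedge\omega=0$ forces $\omega\equiv0$ and so $[\bar\omega]=0$. Thus $F^1\cap\overline F^d=0$, the map $F^1\oplus\overline F^d\to H^d(V;\Cee)$ is an isomorphism, and conjugating gives the isomorphism $F^d\oplus\overline F^1\to H^d(V;\Cee)$.

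The only substantive ingredients are the isotropy of the Hodge filtration under cup product (Lemma~\ref{prelimlemma1}) and the elementary positivity of $\omega\wedge\bar\omega$ for a holomorphic top-form; the rest is bookkeeping with the degenerate spectral sequence, so I do not expect a real obstacle. If one prefers to avoid citing Lemma~\ref{prelimlemma1}, one can argue directly: representing $[\bar\omega]\in F^1$ also by a $d$-closed form $\eta$ with no $(0,d)$-component, Stokes' theorem gives $\int_V\bar\omega\wedge\omega=\int_V\eta\wedge\omega$, and $\eta\wedge\omega$ vanishes identically for bidegree reasons, so again $\omega\equiv0$.
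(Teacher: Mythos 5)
Your proof is correct and follows essentially the same route as the paper: reduce to showing $F^1\cap\overline{F}^d=0$ via the dimension count coming from $E_1$-degeneration and Kodaira--Serre duality, then use the isotropy $(F^d)^\perp=F^1$ from Lemma~\ref{prelimlemma1} together with the pointwise positivity of $\omega\wedge\bar\omega$ for a holomorphic $d$-form. You merely spell out details the paper leaves implicit (the codimension computation, the sign/positivity of $\omega\wedge\bar\omega$, and the conjugation step for the companion isomorphism), all of which are accurate.
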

\begin{proof} Since the codimension of $F^1$ in $H^d(V; \Cee)$ is the dimension of $\overline{F}^d$, it suffices to show that $F^1 \cap \overline{F}^d = 0$. Let $\omega$ be a holomorphic $d$-form and suppose that $\bar{\omega}\in F^1$.  By Lemma~\ref{prelimlemma1}, $F^1 = (F^d)^\perp$ and hence $\displaystyle \int_V\omega \wedge \bar{\omega} = 0$. It follows that $\omega =   \bar{\omega} =0$, and hence that $F^1 \cap \overline{F}^d = 0$ as claimed.
\end{proof}

Let $X$ be a compact complex manifold of dimension three for which the Hodge-de Rham spectral sequence degenerates at $E_1$ and let $F^\bullet$ be the corresponding filtration on $H^3(X;\Cee)$. Suppose that  that $H^i(X; \scrO_X) = H^0(X; \Omega_X^i) = 0$ for $i=1,2$. It follows that, for $n$ odd, $n\neq 3$, $H^n(X;\Cee) = 0$, and for $n=2k$ even, the filtration $F^\bullet$ on $H^{2k}(X;\Cee)$ satisfies: $F^pH^{2k}(X;\Cee) =0, p> k$, and $F^pH^{2k}(X;\Cee) =H^{2k}(X;\Cee)$, $p\leq k$. Thus trivially the filtrations $F^\bullet$ and $\overline{F}^\bullet$ are $2k$-opposed and induce  a Hodge structure on $H^{2k}(X;\Cee)$ for which  $H^{2k}(X;\Cee) = H^{k,k}(X)$.

\begin{corollary}\label{prelimcor} Let $X$ be a compact complex manifold of dimension $3$ for which the Hodge-de Rham spectral sequence degenerates at $E_1$ and let $F^\bullet$ be the corresponding filtration on $H^3(X;\Cee)$.  Suppose that  $H^i(X; \scrO_X) = H^0(X; \Omega_X^i) = 0$ for $i=1,2$. Then the $\partial\bar{\partial}$-lemma holds for $X$ $\iff$ $H^3(X;\Cee) \cong F^2\oplus\overline{F}^2$. \qed
\end{corollary}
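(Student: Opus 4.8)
The plan is to run everything through the theorem of \cite{DGMS} recalled above: since the Hodge--de Rham spectral sequence of $X$ is assumed to degenerate at $E_1$, the $\partial\bar{\partial}$-lemma for $X$ is equivalent to the assertion that $F^\bullet$ and $\overline{F}^\bullet$ are $k$-opposed on $H^k(X;\Cee)$ for \emph{every} $k$. So the entire point is to show that, under the stated hypotheses, $k$-opposedness is automatic for all $k\neq 3$, and that for $k=3$ the only part of it which is not already automatic is precisely the isomorphism $H^3(X;\Cee)\cong F^2\oplus\overline{F}^2$.

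First I would clear away $k\neq 3$. The cases $k=1,5$ are vacuous since $H^k(X;\Cee)=0$. For $k=0$ and $k=6$ the group $H^k(X;\Cee)$ is one-dimensional and, by $E_1$-degeneration, concentrated in Hodge bidegree $(0,0)$, respectively $(3,3)$; so $F^\bullet$ is a trivial filtration and $k$-opposedness holds on inspection. For $k=2$, $E_1$-degeneration gives $\operatorname{gr}^p_F H^2(X;\Cee)\cong H^{2-p}(X;\Omega^p_X)$, and the hypothesis $H^2(X;\scrO_X)=H^0(X;\Omega^2_X)=0$ kills $\operatorname{gr}^0_F$ and $\operatorname{gr}^2_F$; hence $F^1H^2(X;\Cee)=H^2(X;\Cee)$ and $F^2H^2(X;\Cee)=0$, and since $\dim\operatorname{gr}^q_{\overline{F}}=\dim\operatorname{gr}^q_F$ the same holds for $\overline{F}^\bullet$, from which $F^p\oplus\overline{F}^{3-p}=H^2(X;\Cee)$ for all $p$. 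The case $k=4$ is identical: the corresponding hypothesis $H^1(X;\Omega^3_X)=H^3(X;\Omega^1_X)=0$ (which, via Serre duality, is the same pair of vanishings) forces $H^4(X;\Cee)$ to be pure of type $(2,2)$, and $4$-opposedness follows.

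It then remains to analyze $k=3$, where $k$-opposedness asks that $F^p\oplus\overline{F}^{4-p}\cong H^3(X;\Cee)$ for every $p$. For $p\le 0$ this reads $H^3\oplus 0=H^3$ and for $p\ge 4$ it reads $0\oplus H^3=H^3$, because $F^pH^3(X;\Cee)=0$ for $p\ge 4$ (the graded pieces $H^{3-p}(X;\Omega^p_X)$ vanish) and $F^0H^3(X;\Cee)=H^3(X;\Cee)$, with the same statements for $\overline{F}^\bullet$. The cases $p=1$ and $p=3$ are exactly the two isomorphisms $F^1\oplus\overline{F}^3\cong H^3(X;\Cee)$ and $F^3\oplus\overline{F}^1\cong H^3(X;\Cee)$ furnished by Lemma~\ref{prelimlemma21} with $d=3$. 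The single remaining case, $p=2$, is the isomorphism $F^2\oplus\overline{F}^2\cong H^3(X;\Cee)$, which is exactly the hypothesis of the corollary, and this yields the claimed equivalence. I do not expect any genuine obstacle here: the argument is bookkeeping, and the only points requiring a moment's attention are that $E_1$-degeneration together with the stated vanishings really does make $H^2$ and $H^4$ pure of Hodge--Tate type (so that their $k$-opposedness costs nothing), and that the $p=1,3$ cases in degree three are covered word-for-word by Lemma~\ref{prelimlemma21} --- leaving $p=2$ as the unique nontrivial condition.
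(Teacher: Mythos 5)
Your proposal is correct and is precisely the argument the paper intends (the corollary is stated with no written proof, as an immediate consequence of the DGMS theorem, Lemma~\ref{prelimlemma21}, and the vanishing hypotheses): degeneration plus the stated vanishings make opposedness automatic for $k\neq 3$ and for $p\neq 2$ in degree three, leaving $F^2\oplus\overline{F}^2\cong H^3(X;\Cee)$ as the only nontrivial condition. Your bookkeeping, including the use of Serre duality to identify the $H^4$ vanishings and the appeal to Lemma~\ref{prelimlemma21} for the $p=1,3$ cases, is accurate.
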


\section{The limiting mixed Hodge structure}

\subsection{Notation} We fix the following notation for the rest of this section: Let $X_0$ be a compact complex manifold of dimension $3$ with trivial canonical bundle  for which the $\partial\bar{\partial}$-lemma holds. We assume further  that $H^i(X_0; \scrO_{X_0}) =H^0(X_0; \Omega_{X_0}^i) = 0$ for $i=1,2$.  By the Tian-Todorov theorem, the Kuranishi deformation space for $X_0$ can be identified with  the germ of the origin in $H^1(X_0; T_{X_0}) \cong H^1(X_0; \Omega^2_{X_0})$, and is thus a smooth germ of dimension $h$, where 
$$h = h^{2,1}(X_0) = \dim H^1(X_0; \Omega^2_{X_0}).$$

 Let $C_1, \dots, C_r$ be disjoint smooth curves in $X_0$ such that, for all $i$,  $C_i\cong \Pee^1$ and the normal bundle $N_{C_i/X_0} \cong \scrO_{\Pee^1}(-1) \oplus \scrO_{\Pee^1}(-1)$, i.e.\ is of type $(-1, -1)$. The  $C_i$ can be contracted in $X_0$ to points $p_i$, yielding a singular compact threefold $\overline{X}_0$.
We assume that the cohomology classes  $[C_i]$ of the $C_i$ satisfy a linear relation in $H^4(X_0; \Cee)$ of the form
$$\sum_{i=1}^rm_i [C_i] =0, m_i\in \Q,$$
where $m_i \neq 0$ for every $i$, and  that the $[C_i]$ span a subspace of $H^4(X_0; \Cee)$ of dimension $r-1$, so that no $r-1$ of the $[C_i]$ are linearly dependent. Note that we can and shall consider the case $r=1$, in which case the above assumption is simply that $[C_1] = 0$ in $H^4(X_0; \Cee)$.

\subsection{The deformation space}  To analyze the deformation theory of $\overline{X}_0$, let  $\mathbb{T}^i_{\overline{X}_0} =\Ext^i(\Omega^1_{\overline{X}_0}, \scrO_{\overline{X}_0})$ be the objects of Lichtenbaum-Schlessinger theory. Then by \cite[Theorem 4.4]{Friedman1986}, there is an exact sequence
$$0\to H^1(\overline{X}_0; T^0_{\overline{X}_0}) \to \mathbb{T}^1_{\overline{X}_0}  \to \Cee \to 0,$$
where the last term $\Cee$ is identified with the kernel of the fundamental class map
$$\bigoplus_i\Cee[C_i] \to H^4(X_0; \Cee) =H^2(X_0; \Omega^2_{X_0}),$$
$T^0_{\overline{X}_0}$ is the tangent sheaf of $\overline{X}_0$,  and $H^1(\overline{X}_0; T^0_{\overline{X}_0}) \cong H^1(X_0; T_{X_0})$ by \cite[(3.4)]{Friedman1986} and the following remarks.  

 The space $\overline{X}_0$ is smoothable. More precisely, there is the following result due independently to Tian \cite{Tian}, Kawamata \cite{Kawamata}, and Ran \cite{Ran}:

\begin{theorem} The locally semi-universal deformation space for $\overline{X}_0$ can be identified with the germ of the origin in $\mathbb{T}^1_{\overline{X}_0}$, and thus is a smooth germ of dimension $h+1$. Moreover, the germ of the hyperplane $H^1(\overline{X}_0; T^0_{\overline{X}_0})$ corresponds to locally trivial deformations of $\overline{X}_0$, which are identified with deformations of $X_0$. The points lying over the (germ of) $\mathbb{T}^1_{\overline{X}_0} -H^1(\overline{X}_0; T^0_{\overline{X}_0})$ are smooth compact complex manifolds of dimension $3$ with trivial canonical bundle. \qed
\end{theorem}

\begin{remark} Tian proves the theorem under the assumption that the $\partial\bar{\partial}$-lemma holds for $X_0$. Kawamata's result is stated under the hypothesis that $X_0$ is projective, but the proof seems to work in much greater generality. Ran's proof apparently only uses the degeneration of the Hodge-de Rham spectral sequence for $X_0$. If we make the very stringent assumption that the classes $[C_i]$ span  $H^2(X_0; \Omega_{X_0}^2)$ (the main case of interest in this paper), then the above theorem is proved in \cite{Friedman1986}, assuming only that  $K_{X_0}\cong \scrO_{X_0}$.
\end{remark}

\begin{remark}\label{inductiveremark} In what follows, to simplify notation, we will use the fact that it is possible to smooth ``one dimension at a time." More precisely, suppose that the $C_i$ are smooth rational curves of the type considered, but without the assumption that no $r-1$ of the classes $C_i$ are linearly  dependent. We can then reorder the $C_i$ so that no $s-1$ of the classes $[C_1], \dots, [C_s]$ are linearly independent and that $[C_1], \dots, [C_s]$ are linearly dependent. Let $X_1$  be a general smooth threefold  which is a small smoothing of the singular threefold $X_0'$ with  double points obtained by contracting $C_1, \dots, C_s$. Then  $K_{X_1}$ is trivial and (as we shall show) satisfies the $\partial\bar{\partial}$-lemma. The classes $C_{s+1}, \dots, C_r$ then deform to curves in $X_1$, satisfying a linear relation with nonzero coefficients, and we can then repeat the construction.
\end{remark}

\subsection{A normal crossings model} We turn next to semistable models for the deformations of $\overline{X}_0$. Let $\widetilde{X}_0$ be the blowup of $\overline{X}_0$ at the double points, or equivalently of $X_0$ along the curves $C_i$. By Remark~\ref{Remark1.3}(iii), $\widetilde{X}_0$ also satisfies the $\partial\bar{\partial}$-lemma (as is easy to check directly in this special case). Moreover one easily checks that $H^3(\widetilde{X}_0) \cong H^3(X_0)$ is an isomorphism of Hodge structures. The exceptional divisors $Q_i$ over $p_i$, or $C_i$, are smooth quadrics. Thus $Q_i \cong \Pee^1\times \Pee^1$ and the normal bundle $N_{Q_i/\widetilde{X}_0}$ of $Q_i$ in $\widetilde{X}_0$ is $\scrO_{Q_i}(-1, -1)$ (using the standard notation for line bundles on $Q_i$). For each $i$, let $E_i$ be a smooth quadric threefold in $\Pee^4$ and identify $Q_i$ with a smooth hyperplane section of $E_i$, also denoted $Q_i$, by some choice of isomorphism. (Since every element in the neutral component  of the automorphism group of $Q_i$ is induced by restriction from an automorphism of $E_i$, the choice of an isomorphism is irrelevant.) Thus $N_{Q_i/E_i}\cong \scrO_{Q_i}(1, 1)$. Let $\widetilde{Y}_0 = \widetilde{X}_0 \amalg \coprod_i E_i$ and let
$$Y_0 = \widetilde{X}_0 \amalg \coprod_i E_i/\sim\, \, ,$$
where the equivalence relation $\sim$ means that we glue $Q_i \subseteq E_i$ to $Q_i \subseteq \widetilde{X}_0$ by the choice of an isomorphism above. Note that $Y_0$ is in the natural way a $d$-semistable variety with normal crossings in the sense of \cite[(1.13)]{Friedman1983}. Let $\nu\colon \widetilde{Y}_0 \to Y_0$ be the normalization morphism.  We can exhibit a model for the smoothings of $Y_0$ as follows. Let $\bar{\pi}\colon \mathcal{X} \to \overline{S}$ be the germ of the locally semi-universal deformation of $\overline{X}_0$, where we can identify $\overline{S}$ with the germ about the origin in $\mathbb{T}_{\overline{X}_0}$. Let $S\to \overline{S}$ be the double cover of $\overline{S}$ branched along the smooth hypersurface   $\overline{S} \cap H^1(\overline{X}_0; T^0_{\overline{X}_0})$ and let $\overline{\mathcal{Y}} \to S$ be the pulled back family. If $D$ is  the ramification divisor of the cover $S\to \overline{S}$ or equivalently the inverse image of $H^1(\overline{X}_0; T^0_{\overline{X}_0})$ in $S$, then $D$ is the discriminant locus of $\bar{\pi}$, the fibers of $\overline{\mathcal{Y}}$ over $D$ have $r$ ordinary double points corresponding to the singular points and the singularities of the total space $\overline{\mathcal{Y}}$ are locally analytically isomorphic to products of ordinary double points of dimension $4$ with $D$. Blowing up these singular points gives a proper flat morphism $\pi\colon \mathcal{Y} \to S$, where $\mathcal{Y}$ is smooth, the discriminant locus of $\pi$ is $D$, and the fibers of $\pi$ over $D$ are locally trivial deformations of the normal crossings varieties $Y_0$ described above (and hence also have normal crossings). Let $\mathcal{Y}_D=\pi^{-1}(D)$. Thus $\mathcal{Y}_D$ is a divisor with normal crossings in $\mathcal{Y}$. For $s\notin D$, the fiber $Y_s$ of $\pi$ is identified with the corresponding smooth fiber $X_{\bar{s}}$ of $\bar{\pi}$, where $\bar{s}\in \overline{S}$ is the point lying under $s$. 

\subsection{A mixed Hodge structure  on $Y_0$}
By convention, all cohomology is with $\Cee$-coefficients unless otherwise specified.  We have the Mayer-Vietoris sequence for $Y_0$:
$$0 \to \Cee_{Y_0} \to \nu_*\Cee_{\widetilde{Y}_0} \to \bigoplus _i(j_i)_*\Cee_{Q_i} \to 0,$$
where $j_i \colon Q_i \to Y_0$ is the inclusion.   Using the fact that $H^1(Q_i) = H^3(Q_i) = 0$, we get an exact sequence
\begin{gather*}0 \to H^2(Y_0) \to H^2(\widetilde{X}_0)\oplus \bigoplus_iH^2(E_i) \to \bigoplus_iH^2(Q_i)\to \\
\to H^3(Y_0) \to H^3(\widetilde{X}_0) = H^3(X _0) \to 0.
\end{gather*}
If $b=\dim H^2(X_0)$ is the second Betti number $b_2(X_0)$, then the second Betti number $b_2(\widetilde{X}_0)$ of $\widetilde{X}_0$ is $b+r$ and hence  $\dim H^2(\widetilde{X}_0)\oplus \bigoplus_iH^2(E_i)=b+2r$. Moreover, $\bigoplus_iH^2(Q_i)\cong \Cee^{2r}$. In fact, $H^2(Q_i) \cong \Cee[\sigma_i]\oplus \Cee[f_i]$, where $f_i$ is a fiber of the morphism $Q_i \to C_i$, and $\sigma_i$ is a fiber of the ``other ruling" on $Q_i \cong \Pee^1\times \Pee^1$. Then, taking the positive generator $[Q_i]$ of $H^2(E_i)$, the homomorphism 
$H^2(E_i)\to H^2(Q_i)$ sends $[Q_i]$ to $[\sigma_i] + [f_i]$.  The homomorphism 
$H^2(\widetilde{X}_0)\to H^2(Q_i)$ sends $[Q_i]$ to $-[\sigma_i] - [f_i]$ and sends a class of the form $\rho^*\xi$, where $\rho\colon \widetilde{X}_0 \to X_0$ is the blowup morphism, to $(\xi \cdot [C_i])[f_i]$. A brief computation shows the following:

\begin{proposition}\label{prop2.4}  {\rm(i)} Let $W_2$ be the image of $\bigoplus_iH^2(Q_i)$ in $H^3(Y_0) = W_3$. Then $W_2$ has rank one and $W_3/W_2 \cong H^3(X _0)$. 

\smallskip
\noindent {\rm(ii)} $H^1(Y_0) = H^5(Y_0) = 0$. 

\smallskip
\noindent {\rm(iii)} $H^2(Y_0)$ has dimension $b+1$, and is isomorphic to the following subgroup of $H^2(\widetilde{X}_0)\oplus \bigoplus_iH^2(E_i)$
$$\left\{\rho^*\xi + \sum_i a_iq_i' + \sum _ib_iq_i'': a_i = b_i \text{ and } \xi \cdot [C_i] = 0  \text{ for all $i$ }\right\},$$
where $q_i'$ is the class of $Q_i$ in $H^2(\widetilde{X}_0)$ and $q_i''$ is the class of $Q_i$ in
$H^2(E_i)$.

\smallskip
\noindent {\rm(iv)} $H^4(Y_0) \cong  H^4(X_0)\oplus \Cee^r$ has dimension  $b+r$.
\qed
\end{proposition}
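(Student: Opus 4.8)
The plan is to compute the cohomology groups of $Y_0$ using the Mayer–Vietoris sequence already set up, together with what we know about the resolution $\widetilde{X}_0$ (a blowup of $X_0$ along the $C_i$, for which the $\partial\bar\partial$-lemma holds since $X_0$ does and blowing up along a submanifold for which the $\partial\bar\partial$-lemma holds preserves it) and the quadrics $Q_i\cong\Pee^1\times\Pee^1$ and $E_i\subset\Pee^4$ (smooth projective, so Hodge theory applies). The weight filtration on $H^3(Y_0)$ is the one coming from the spectral sequence of a normal crossings variety; here the combinatorics are simple because $\widetilde{Y}_0$ has the two ``strata'' $\widetilde{X}_0\amalg\coprod_iE_i$ and the double locus $\coprod_iQ_i$, and all triple intersections are empty.

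First I would record the inputs: $H^\bullet(Q_i)$ with its classes $[\sigma_i],[f_i]$ in degree $2$; $H^2(E_i)\cong\Cee[Q_i]$ with the restriction map sending $[Q_i]\mapsto[\sigma_i]+[f_i]$ (since $N_{Q_i/E_i}\cong\scrO(1,1)$ and $Q_i$ is a hyperplane section of the quadric threefold $E_i$, the Lefschetz hyperplane theorem gives $H^2(E_i)\cong H^2(Q_i)^{\text{invariant}}$, or more simply one checks the intersection numbers); $H^2(\widetilde{X}_0)\cong\rho^*H^2(X_0)\oplus\bigoplus_i\Cee[Q_i]$ with the restriction map as stated; and $H^3(\widetilde{X}_0)\cong H^3(X_0)$ and $H^1(\widetilde{X}_0)=H^1(X_0)=0$, $H^5(\widetilde{X}_0)=H^5(X_0)=0$, using the assumptions in \S2.1 and the blowup formula. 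Also $H^1(Q_i)=H^3(Q_i)=0$, which is what truncates the Mayer–Vietoris sequence on both ends.

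Then I would extract the four statements from the displayed exact sequence
$$0 \to H^2(Y_0) \to H^2(\widetilde{X}_0)\oplus \bigoplus_iH^2(E_i) \xrightarrow{\ \phi\ } \bigoplus_iH^2(Q_i)\to H^3(Y_0) \to H^3(X _0) \to 0$$
together with the even-degree pieces. For (ii), $H^1(Y_0)$ injects into $H^1(\widetilde{X}_0)\oplus\bigoplus_iH^1(E_i)=0$; and $H^5(Y_0)$ sits between $\bigoplus_iH^4(Q_i)\to H^5(Y_0)\to H^5(\widetilde{X}_0)\oplus\bigoplus_iH^5(E_i)=0$ where the map $\bigoplus_iH^4(Q_i)\to\bigoplus_iH^4(E_i)\oplus H^4(\widetilde{X}_0)$ can be checked injective (each $[pt_{Q_i}]\mapsto$ a line class in $E_i$, nonzero), so $H^5(Y_0)=0$. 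For (iii) and (iv) I need the rank and kernel/cokernel of $\phi$. Writing $\phi(\rho^*\xi+\sum a_i[Q_i]_{\widetilde X}+\sum b_i[Q_i]_{E})=\sum_i\big((\xi\cdot[C_i]-a_i)[f_i]-a_i[\sigma_i]+b_i([\sigma_i]+[f_i])\big)$ in the $([\sigma_i],[f_i])$-basis, the $[\sigma_i]$-component is $b_i-a_i$ and the $[f_i]$-component is $\xi\cdot[C_i]-a_i+b_i$. So $\ker\phi=\{b_i=a_i,\ \xi\cdot[C_i]=0\ \forall i\}$, which is exactly the description in (iii); its dimension is $\dim\{\xi\in H^2(X_0):\xi\cdot[C_i]=0\ \forall i\}+r$. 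Here the hypothesis that the $[C_i]$ span a subspace of dimension $r-1$ in $H^4(X_0;\Cee)\cong H^2(X_0)^\vee$ forces $\dim\{\xi:\xi\cdot[C_i]=0\}=b-(r-1)$, giving $\dim H^2(Y_0)=b-(r-1)+r=b+1$. The image of $\phi$ has dimension $(b+2r)-(b+1)=2r-1$, so $\operatorname{coker}\phi$ has dimension $2r-(2r-1)=1$; by exactness this cokernel is $W_2\subseteq H^3(Y_0)$, proving (i): $W_2$ has rank $1$ and $W_3/W_2\cong\operatorname{im}(H^3(Y_0)\to H^3(X_0))=H^3(X_0)$ (surjective, and injective modulo $W_2$ by exactness). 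Finally (iv): the sequence $\bigoplus_iH^3(Q_i)=0\to H^4(Y_0)\to H^4(\widetilde{X}_0)\oplus\bigoplus_iH^4(E_i)\to\bigoplus_iH^4(Q_i)$; here $H^4(\widetilde X_0)\cong H^4(X_0)\oplus\bigoplus_i\Cee[\text{line}_i]$, $H^4(E_i)\cong\Cee$, $H^4(Q_i)\cong\Cee^2$, and one computes that the restriction map $H^4(\widetilde X_0)\oplus\bigoplus_iH^4(E_i)\to\bigoplus_iH^4(Q_i)$ has image of dimension $r$ (the two line classes in $Q_i$ from the two sides span a $1$-dimensional space each, overlapping appropriately) — more carefully, $H^4(X_0)$ maps to zero (a class $\rho^*\eta$ with $\eta\in H^4(X_0)$ restricts to $(\eta\cdot$ something$)$, but dimension count via the blowup shows the relevant contribution), so that $\dim H^4(Y_0)=(b+r+r)-r=b+r$, and tracking the pieces identifies $H^4(Y_0)\cong H^4(X_0)\oplus\Cee^r$.

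The main obstacle I expect is bookkeeping the degree-$4$ maps in (iv) cleanly: one must pin down exactly which classes in $H^4(\widetilde{X}_0)$ and $H^4(E_i)$ restrict nontrivially to $\bigoplus_iH^4(Q_i)$ and with what images, using the geometry of the hyperplane section $Q_i\subset E_i$ and the exceptional divisor $Q_i\subset\widetilde X_0$ over $C_i$. The degree-$2$ and degree-$3$ assertions (i)–(iii) are essentially forced once the map $\phi$ is written in coordinates and the spanning hypothesis on the $[C_i]$ is invoked, so the ``brief computation'' referred to in the statement is really the degree-$4$ tally; everything else is reading off the Mayer–Vietoris sequence.
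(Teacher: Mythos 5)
Your computation is correct and is essentially the paper's own argument: the ``brief computation'' alluded to in the text is exactly what you do, namely reading off the Mayer--Vietoris sequence using the stated restriction maps, writing $\phi$ in the $([\sigma_i],[f_i])$-basis to get the kernel description in (iii), and invoking the hypothesis that the $[C_i]$ span an $(r-1)$-dimensional subspace (together with Poincar\'e duality on $X_0$) to get $\dim\ker\phi=b+1$ and $\dim\operatorname{coker}\phi=1$, which is (i). Two bookkeeping slips that do not affect the conclusions: $H^4(Q_i)\cong\Cee$, not $\Cee^2$ (so $\bigoplus_iH^4(Q_i)\cong\Cee^r$ and the image of dimension $r$ means the restriction map is surjective, consistent with $H^5(Y_0)=0$); and the vanishing of $H^5(Y_0)$ should be deduced from surjectivity of the restriction $H^4(\widetilde{X}_0)\oplus\bigoplus_iH^4(E_i)\to\bigoplus_iH^4(Q_i)$ (which holds because the line class in $E_i$ restricts to the point class in $Q_i$), not from injectivity of a map out of $\bigoplus_iH^4(Q_i)$, which goes the wrong way in the long exact sequence.
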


Part (i) of Proposition~\ref{prop2.4} gives a weight filtration on $H^3(Y_0)$, defined over $\Q$, with $W_1 =0$. There are also trivial (increasing) filtrations on $H^k(Y_0)$ for $k\neq 3$: take $W_\ell = H^k(Y_0)$ for $\ell\geq k$ and $W_\ell =0$ for $\ell < k$. To construct a Hodge filtration, we can use the complex $\Omega_{Y_0}^{\bullet}/\tau_{Y_0}^{\bullet}$ of \cite[(1.5)]{Friedman1983}, where $\Omega_{Y_0}^1$ is the sheaf of K\"ahler differentials on $Y_0$, $\Omega_{Y_0}^{\bullet} = \bigwedge^{\bullet}\Omega_{Y_0}^1$, and $\tau_{Y_0}^{\bullet}$ is the subcomplex of ``torsion differential," i.e.\ those supported on $(Y_0)_{\text{sing}}$. By \cite[(1.5)]{Friedman1983}, $(\Omega_{Y_0}^{\bullet}/\tau_{Y_0}^{\bullet}, d)$ is a resolution of the constant sheaf $\Cee_{Y_0}$, and there is an exact sequence 
$$0 \to \Omega_{Y_0}^{\bullet}/\tau_{Y_0}^{\bullet} \to \nu_*\Omega_{\widetilde{Y}_0}^{\bullet}\to \bigoplus _i(j_i)_*\Omega_{Q_i}^{\bullet}\to 0.$$
Taking hypercohomology gives the Mayer-Vietoris sequence above. 

\begin{theorem}\label{MHS1} The spectral sequence with $E_1$ page 
$$E_1^{p,q} =H^q(Y_0; \Omega_{Y_0}^p/\tau_{Y_0}^p)\implies \mathbb{H}^{p+q}(Y_0; \Omega_{Y_0}^{\bullet}/\tau_{Y_0}^{\bullet})=H^{p+q}(Y_0) $$
 degenerates at $E_1$. The corresponding filtration $F^\bullet$ on $H^k(Y_0)$, together with the weight filtration $W_{\bullet}$, give a mixed Hodge structure on $H^n(Y_0)$, which is pure for $n\neq 3$. More precisely, 
 \begin{enumerate} 
 \item[\rm(i)]  $H^n(Y_0) =0$ for $n=1,5$;
 \item[\rm(ii)] For $n=2k$, the mixed Hodge structure on  $H^{2k}(Y_0)$ is pure and $H^{2k}(Y_0) = H^{k,k}(Y_0)$;
 \item[\rm(iii)] As mixed Hodge structures over $\Q$, $H^3(Y_0)$ is an extension of the pure Hodge structure $H^3(X_0)$ by a pure weight two piece  $\cong \Q(-1)$.
 \end{enumerate}
\end{theorem}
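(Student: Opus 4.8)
The plan is to build the mixed Hodge structure on $H^\bullet(Y_0)$ from the Mayer--Vietoris data, using the fact that $\widetilde{X}_0$, the quadric threefolds $E_i$, and the quadric surfaces $Q_i$ all satisfy the $\partial\bar\partial$-lemma (for $\widetilde{X}_0$ this follows since it is a blowup of $X_0$ along smooth centres and blowups preserve the $\partial\bar\partial$-lemma; for $E_i$ and $Q_i$ it holds because they are projective, indeed rational). Thus $\nu_*\Omega^\bullet_{\widetilde Y_0}$ and $\bigoplus_i(j_i)_*\Omega^\bullet_{Q_i}$ each carry the usual Hodge filtration $F^\bullet = \sigma_{\geq p}$ (stupid filtration), and the exact sequence of complexes
$$0 \to \Omega^\bullet_{Y_0}/\tau^\bullet_{Y_0} \to \nu_*\Omega^\bullet_{\widetilde Y_0} \to \bigoplus_i (j_i)_*\Omega^\bullet_{Q_i} \to 0$$
is filtered, inducing $F^\bullet$ on $\Omega^\bullet_{Y_0}/\tau^\bullet_{Y_0}$ and hence on $H^k(Y_0)$. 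The degeneration of the associated $E_1$ spectral sequence I would deduce from the long exact hypercohomology sequence associated to the filtered short exact sequence above together with degeneration for the two smooth pieces: a standard diagram chase (or the five lemma applied to the comparison of spectral sequences) forces degeneration in the middle, since the maps on $E_1$-pages are strict.

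Next I would identify the pieces. For $n = 1, 5$ statement (i) is just Proposition~\ref{prop2.4}(ii). For $n = 2k$, by Proposition~\ref{prop2.4} the group $H^{2k}(Y_0)$ sits as a sub (for $k=1$) or quotient (for $k=2$) of sums of $H^{2k}$ of the smooth pieces, each of which is purely of type $(k,k)$ by the hypotheses on $X_0$ (hence on $\widetilde X_0$) and by the explicit cohomology of $E_i$ and $Q_i$; strictness of the morphisms of (mixed) Hodge structures in the Mayer--Vietoris sequence then gives that $H^{2k}(Y_0)$ is pure of type $(k,k)$, proving (ii). The one substantive point is (iii): the weight filtration on $H^3(Y_0)$ has $W_2$ of rank one, the image of $\bigoplus_i H^2(Q_i)$, and $W_3/W_2 \cong H^3(X_0)$ by Proposition~\ref{prop2.4}(i). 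I must check that $W_2$, with the induced $F^\bullet$, is a Hodge structure of weight $2$ isomorphic to $\Q(-1)$ — equivalently that the generating class is of type $(1,1)$ — and that $W_3/W_2$, with the induced filtration, recovers the given Hodge structure $F^\bullet$ on $H^3(X_0)$ (which is a genuine Hodge structure since $X_0$ satisfies the $\partial\bar\partial$-lemma). The first follows because $W_2$ is the image of algebraic cohomology classes $[\sigma_i], [f_i]$ (divisor classes on the $Q_i$), which are of Hodge type $(1,1)$ and remain so under the Gysin-type map into $H^3(Y_0)$; concretely one traces through the Mayer--Vietoris connecting map on the level of the complexes $\Omega^\bullet/\tau^\bullet$. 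The second follows since the quotient map $H^3(Y_0) \to H^3(\widetilde X_0) = H^3(X_0)$ in Mayer--Vietoris is a morphism of filtered complexes, hence strict, so the induced filtration on $W_3/W_2$ is the Hodge filtration on $H^3(X_0)$.

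The main obstacle I expect is the bookkeeping in (iii): showing that the weight-two quotient is exactly $\Q(-1)$ and not some twist requires knowing the Hodge type of the connecting homomorphism $\delta\colon \bigoplus_i H^2(Q_i) \to H^3(Y_0)$, i.e.\ that it shifts weight by $0$ but lands in weight $2 \leq 3$, which is forced by the weight filtration being defined over $\Q$ with $W_2$ the image of $\delta$; combined with the $(1,1)$-type of the source this pins down the extension class as living in $\operatorname{Ext}^1_{\mathrm{MHS}}(H^3(X_0), \Q(-1))$, and in particular the sub is $\Q(-1)$. One should also verify that $F^\bullet$ and $W_\bullet$ actually interact correctly to give a mixed Hodge structure, i.e.\ that $\operatorname{Gr}^W_k$ carries a pure Hodge structure of weight $k$ for each $k$ — but for $k \neq 3$ this is (ii) and for $k = 3$, $\operatorname{Gr}^W_3 = W_3/W_2 \cong H^3(X_0)$ which is pure of weight $3$ by assumption, so there is nothing further to check. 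A final routine remark: the degeneration at $E_1$ of the $\Omega^\bullet_{Y_0}/\tau^\bullet_{Y_0}$ spectral sequence also re-proves, via the weight spectral sequence / Mayer--Vietoris, the claimed dimension counts, giving internal consistency with Proposition~\ref{prop2.4}.
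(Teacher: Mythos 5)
Your proposal follows essentially the same route as the paper: filter the Mayer--Vietoris sequence of complexes using the pure Hodge structures on $\widetilde{X}_0$, the $E_i$ and the $Q_i$ (all satisfying the $\partial\bar{\partial}$-lemma), deduce the mixed Hodge structure and $E_1$-degeneration from strictness, and identify $W_2$ as type $(1,1)$ from the algebraic classes on the $Q_i$, with $W_3/W_2\cong H^3(X_0)$ by strictness of the quotient map. The only soft spot is your appeal to the five lemma for degeneration (note that the complex in question is the sub-term, not the middle term, of the short exact sequence, and degeneration for a sub does not follow formally from degeneration of the other two terms); the paper instead invokes the mixed Hodge complex formalism and confirms it by the explicit dimension count $\sum_{p+q=k}\dim H^q(Y_0;\Omega^p_{Y_0}/\tau^p_{Y_0})=\dim H^k(Y_0)$, which is the cleaner way to close that step.
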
 
\begin{proof} Although we have not necessarily assumed that $X_0$ is K\"ahler, its cohomology satisfies the $\partial\bar{\partial}$-lemma and the same is true for the projective varieties $E_i$ and $Q_i$. Thus all of the terms in the Mayer-Vietoris sequence carry pure Hodge structures and the morphisms are  morphisms of Hodge structures. Then the method of proof of \cite[(4.2)]{GriffithsSchmid} shows that there is a mixed Hodge structure on $H^n(Y_0)$, and the usual arguments with mixed Hodge complexes (\cite[(8.1.9)]{DeligneHodgeIII} or \cite[Theorem 3.18]{PetersSteenbrink}) show  that the above spectral sequence degenerates at $E_1$.

The other statements are proved by explicit calculation.
Starting with $\scrO_{Y_0}$, we have the usual resolution
$$0 \to \scrO_{Y_0} \to \nu_*(\scrO_{\widetilde{X}_0} \oplus \bigoplus_i\scrO_{E_i} ) \to \bigoplus_i(j_i)_*\scrO_{Q_i} \to 0.$$
It follows that $H^0(Y_0; \scrO_{Y_0}) \cong \Cee$, $H^3(Y_0; \scrO_{Y_0}) \cong H^3(X_0; \scrO_{X_0}) \cong \Cee$, and $H^k(Y_0; \scrO_{Y_0}) =0$, $k\neq 0,3$. As for $\Omega_{Y_0}^1/\tau_{Y_0}^1$, beginning with the exact sequence 
$$0 \to \Omega_{Y_0}^1/\tau_{Y_0}^1 \to \nu_*\Omega_{\widetilde{Y}_0}^1\to \bigoplus _i(j_i)_*\Omega_{Q_i}^1\to 0,$$
we see that $H^0(Y_0; \Omega_{Y_0}^1/\tau_{Y_0}^1) = H^3(Y_0; \Omega_{Y_0}^1/\tau_{Y_0}^1)=0$, that $H^1(Y_0; \Omega_{Y_0}^1/\tau_{Y_0}^1)\cong H^2(Y_0)$ and that there is an exact sequence
$$0 \to \Cee \to H^2(Y_0; \Omega_{Y_0}^1/\tau_{Y_0}^1) \to H^2(X_0; \Omega_{X_0}^1) \to 0.$$
The cases $H^q(Y_0; \Omega_{Y_0}^p/\tau_{Y_0}^p)$, $p=2,3$ are analyzed in a similar way. We remark that, by directly checking all possible cases for all $k$, it follows that
$$\sum_{p+q=k}\dim H^q(Y_0; \Omega_{Y_0}^p/\tau_{Y_0}^p) =\dim H^k(Y_0). $$
Thus we see again that the spectral sequence degenerates at $E_1$. 

The remaining statements also follow by inspection, using the compatibility of the above exact sequences with the Mayer-Vietoris exact sequence. For example, for the Hodge and weight filtrations on $H^3$, there is a surjection from $ \bigoplus _iF^2H^2(Q_i)=0$ to $F^2\cap W_2$, so that $F^2\cap W_2 =0$, and similarly  $F^1\cap W_2 = W_2$, i.e.\ $W_2$ is pure of  type $(1,1)$. 
\end{proof}

\subsection{The limiting mixed Hodge structure} We begin by constructing the relative log complex. Recall that $S$ is the base of the deformation $\mathcal{Y}$ of $Y_0$, with discriminant locus $D$, and that $\mathcal{Y}_D \to D$ is the locally trivial part of the deformation of $Y_0$. After shrinking, we will assume that $S$ is a polydisk $\Delta^{h+1}$ and that $D$ is the divisor $\Delta^h\times \{0\}$. Let $S^* = \Delta^h \times \Delta^*$, where $\Delta^*$ is the punctured unit disk, and let $\pi^* \colon\mathcal{Y}^* \to S^*$ be the restriction of $\pi$ to $S^*$. Thus $R^n(\pi^*)_*\Cee = \underline{H}^n$ is a local system over $S^*$.

Define the sheaf  $\Omega^1_{\mathcal{Y}/S}(\log \mathcal{Y}_D)$ by the exact sequence
$$0 \to \pi^*\Omega^1_S(\log D) \to \Omega^1_{\mathcal{Y}}(\log \mathcal{Y}_D) \to  \Omega^1_{\mathcal{Y}/S}(\log \mathcal{Y}_D) \to 0.$$
It is a locally free sheaf of rank $3$. Define the relative log complex via
$$\Omega^\bullet_{\mathcal{Y}/S}(\log \mathcal{Y}_D)  = \bigwedge^\bullet\Omega^1_{\mathcal{Y}/S}(\log \mathcal{Y}_D),$$
with the usual differential.
For a fiber $Y_s$, $s\notin D$, $\Omega^\bullet_{\mathcal{Y}/S}(\log \mathcal{Y}_D)|Y_s \cong \Omega^\bullet_{Y_s}$. For the singular fiber $Y_0$, we set $\Lambda^\bullet_{Y_0} = \Omega^\bullet_{\mathcal{Y}/S}(\log \mathcal{Y}_D)|Y_0$. The complex $\Omega^\bullet_{\mathcal{Y}/S}(\log \mathcal{Y}_D)$ is the relative log complex of Deligne-Steenbrink with extra parameters coming from the locally trivial deformations of $Y_0$. In fact, if $\Delta \to S$ is a morphism of the disk to $S$, transverse to the discriminant locus $D$, then the pullback of $\Omega^\bullet_{\mathcal{Y}/S}(\log \mathcal{Y}_D)$ to $\Delta$ is the usual one parameter relative log complex. The arguments of \cite{Steenbrink} or \cite[Corollary 11.18]{PetersSteenbrink} show:

\begin{theorem} The hypercohomology $\mathbb{H}^n(Y_0; \Lambda^\bullet_{Y_0})$ is isomorphic to the cohomology $H^n(\mathcal{Y}^*\times _{S^*} \widetilde{S ^*};\Cee)$, where $\widetilde{S ^*}= \Delta^h \times \widetilde{\Delta^*}$ is the universal cover of $S^*$. (Here $\widetilde{\Delta^*}\cong \mathfrak{H}$ is the universal cover of $\Delta^*$.) The sheaf 
$$\overline{\mathcal{H}}^n =\mathbb{R}^n\pi_*\Omega^\bullet_{\mathcal{Y}/S}(\log \mathcal{Y}_D)$$
is locally free and satisfies: $\overline{\mathcal{H}}^n|S^*$ is the holomorphic flat vector bundle $\mathcal{H}^n =\underline{H}^n\otimes _\Cee \scrO_{S^*}$, and $\overline{\mathcal{H}}^n$ is Deligne's canonical extension of $\mathcal{H}^n $. \qed
\end{theorem}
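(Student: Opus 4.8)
The plan is to reduce the statement to the one–parameter Deligne--Steenbrink theory of the limiting mixed Hodge structure, the point being that the extra holomorphic parameters coming from the locally trivial deformations of $Y_0$ play no essential role. Write $S\cong\Delta^h\times\Delta$ with $D=\Delta^h\times\{0\}$, so that $\Omega^\bullet_{\mathcal{Y}/S}(\log\mathcal{Y}_D)$ is, from the point of view of the $\Delta$–factor alone, the relative log de Rham complex of a semistable degeneration, now with locally free coefficients over the trivial factor $\Delta^h=D$. The two assertions to be proved are (a) the identification of $\mathbb{H}^n(Y_0;\Lambda^\bullet_{Y_0})$ with the cohomology of the canonical fiber, and (b) the identification of $\overline{\mathcal{H}}^n$ with Deligne's canonical extension; I would treat them in that order.

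First I would compare the two sides over $S^*$. Since $D$ does not meet $S^*$ and $\mathcal{Y}_D$ does not meet $\mathcal{Y}^*$, the defining exact sequence of the relative log complex restricts over $\mathcal{Y}^*$ to the relative cotangent sequence of the smooth proper morphism $\pi^*\colon\mathcal{Y}^*\to S^*$, so $\Omega^\bullet_{\mathcal{Y}/S}(\log\mathcal{Y}_D)|_{\mathcal{Y}^*}$ is the relative holomorphic de Rham complex of $\pi^*$. By the relative holomorphic Poincar\'e lemma this complex resolves the topological pullback of $\scrO_{S^*}$, and hence $\overline{\mathcal{H}}^n|_{S^*}\cong\underline{H}^n\otimes_\Cee\scrO_{S^*}=\mathcal{H}^n$, compatibly with the flat (Gauss--Manin) connection; recall that $\underline{H}^n=R^n(\pi^*)_*\Cee$ is a local system by Ehresmann's theorem.

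Next I would analyze the behaviour along $D$. For a disk $\iota\colon\Delta\hookrightarrow S$ transverse to $D$, base change identifies $\iota^*\Omega^\bullet_{\mathcal{Y}/S}(\log\mathcal{Y}_D)$ with the one–parameter relative log complex of the semistable degeneration $\pi^{-1}(\Delta)\to\Delta$, whose singular fiber $Y_0$ is a reduced normal crossings variety (here of the particularly simple shape: two smooth components meeting transversally along the disjoint quadrics $Q_i$); in particular the monodromy about $D$ is unipotent. Steenbrink's resolution of the nearby cycle complex then shows that $\Lambda^\bullet_{Y_0}=\Omega^\bullet_{\mathcal{Y}/S}(\log\mathcal{Y}_D)|_{Y_0}$ is quasi-isomorphic, in the derived category on $Y_0$, to $R\psi_{\pi}\Cee$. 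Taking hypercohomology and using that $\mathbb{H}^n(Y_0;R\psi_{\pi}\Cee)$ is the cohomology of the canonical fiber --- the pullback of $\mathcal{Y}^*$ along the universal cover $\widetilde{S^*}=\Delta^h\times\widetilde{\Delta^*}$ of $S^*$ --- gives the first assertion $\mathbb{H}^n(Y_0;\Lambda^\bullet_{Y_0})\cong H^n(\mathcal{Y}^*\times_{S^*}\widetilde{S^*};\Cee)$. Since $\widetilde{S^*}$ is contractible this group is of course just the cohomology of a smooth fiber $Y_s$, but the stated description is the one that carries the natural monodromy action.

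For the second assertion, properness of $\pi$ gives coherence of $\overline{\mathcal{H}}^n=\mathbb{R}^n\pi_*\Omega^\bullet_{\mathcal{Y}/S}(\log\mathcal{Y}_D)$, and the $E_1$–degeneration of the Hodge–to–de Rham spectral sequence of the relative log complex (a consequence of the limiting mixed Hodge theory on $Y_0$, cf.\ \cite[Cor.\ 11.18]{PetersSteenbrink}) together with cohomology and base change shows that $\overline{\mathcal{H}}^n$ is locally free and commutes with restriction to fibers. Its Gauss--Manin connection, obtained by the standard Katz--Oda construction from the filtration of $\Omega^\bullet_{\mathcal{Y}}(\log\mathcal{Y}_D)$ by powers of $\pi^*\Omega^1_S(\log D)$ --- i.e.\ as the connecting homomorphism of the exact sequence $0\to\pi^*\Omega^1_S(\log D)\otimes\Omega^{\bullet-1}_{\mathcal{Y}/S}(\log\mathcal{Y}_D)\to\Omega^\bullet_{\mathcal{Y}}(\log\mathcal{Y}_D)/\pi^*\Omega^2_S(\log D)\wedge\Omega^{\bullet-2}_{\mathcal{Y}}(\log\mathcal{Y}_D)\to\Omega^\bullet_{\mathcal{Y}/S}(\log\mathcal{Y}_D)\to 0$ --- has logarithmic poles along $D$, restricts on $S^*$ to the flat connection found above, and has residue along $D$ equal, up to the usual scalar, to the logarithm of the unipotent monodromy, hence nilpotent; a vector bundle extension of $\mathcal{H}^n$ over $S$ whose connection has log poles along $D$ with nilpotent (in particular, integer–eigenvalue-free on $(0,1)$) residue is, by the uniqueness of the canonical extension, Deligne's canonical extension. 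The step that requires genuine care, and the one where the geometry of $Y_0$ actually enters, is the identification $\Lambda^\bullet_{Y_0}\simeq R\psi_{\pi}\Cee$ and the attendant residue computation; everything else is bookkeeping, checking that Steenbrink's arguments are unaffected by the trivial extra factor $\Delta^h$, which changes neither the local analytic normal form of $\mathcal{Y}\to S$ near $\mathcal{Y}_D$ nor the monodromy. This is why it suffices to cite \cite{Steenbrink} and \cite[Cor.\ 11.18]{PetersSteenbrink}.
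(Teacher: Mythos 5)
Your proposal is correct and follows essentially the same route as the paper, which simply delegates the proof to the one--parameter Deligne--Steenbrink theory (\cite{Steenbrink}, \cite[Corollary 11.18]{PetersSteenbrink}) after observing that the extra factor $\Delta^h$ of locally trivial deformations changes neither the local normal form of $\pi$ near $\mathcal{Y}_D$ nor the monodromy; you have merely written out what those references supply. (One cosmetic slip: $Y_0$ has $r+1$ components, namely $\widetilde{X}_0$ and the quadric threefolds $E_1,\dots,E_r$, not two, though this does not affect the argument since there are still no triple intersections.)
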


The arguments of \cite[Theorem 11.22 and Corollaries 11.23 and 11.24]{PetersSteenbrink} as well as the method of proof of Theorem~\ref{MHS1} then show:

\begin{theorem}\label{MHS2} {\rm(i)} Denote 
$$\mathbb{H}^n(Y_0; \Lambda^\bullet_{Y_0})\cong H^n(\mathcal{Y}^*\times _{S^*} \widetilde{S^*};\Cee)$$ by $H^n_{\text{\rm{lim}}}$. Then there is a mixed Hodge structure on $H^n_{\text{\rm{lim}}}$, the \textsl{limiting mixed Hodge structure}, some of whose properties we recall below.

 \smallskip
 \noindent {\rm(ii)}
The spectral sequence with $E_1$ page 
$$E_1^{p,q} =H^q(Y_0; \Lambda_{Y_0}^p)\implies \mathbb{H}^{p+q}(Y_0; \Lambda_{Y_0}^{\bullet}) =H^{p+q}_{\text{\rm{lim}}}$$
 degenerates at $E_1$ and the corresponding filtration on $H^{p+q}_{\text{\rm{lim}}}$ is the Hodge filtration.
 
  \smallskip
 \noindent {\rm(iii)} Possibly after shrinking $S$, the spectral sequence of coherent sheaves on $S$ whose $E_1$ page is 
 $$E_1^{p,q} = R^q\pi_*\Omega^p_{\mathcal{Y}/S}(\log \mathcal{Y}_D) \implies \mathbb{R}^{p+q}\pi_*\Omega^\bullet_{\mathcal{Y}/S}(\log \mathcal{Y}_D) = \overline{\mathcal{H}}^{p+q}$$
 degenerates at $E_1$. Thus, for $t\in S^*$, the Hodge-de Rham spectral sequence for $Y_t$ degenerates at $E_1$. Moreover, the sheaves $R^q\pi_*\Omega^p_{\mathcal{Y}/S}(\log \mathcal{Y}_D)$ are locally free. \qed
\end{theorem}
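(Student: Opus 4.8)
The plan is to derive (i) and (ii) from the existence of a cohomological mixed Hodge complex whose hypercohomology is $H^{\bullet}_{\mathrm{lim}}$, constructed exactly as in Steenbrink's treatment of a one-parameter semistable degeneration, and then to deduce (iii) from (ii) by base change and upper semicontinuity. For (i) and (ii) the starting point is that a disk $\Delta\to S$ meeting $D$ transversally at the origin pulls $\Omega^{\bullet}_{\mathcal{Y}/S}(\log\mathcal{Y}_D)$ back to the classical one-parameter Deligne--Steenbrink relative log complex, of which $\Lambda^{\bullet}_{Y_0}$ is the restriction to the central fibre, while the remaining $h$ coordinates merely parametrize locally trivial deformations of $Y_0$ and so leave unchanged the configuration of components $\widetilde{X}_0$, $E_i$ and double curves $Q_i$. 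One therefore runs Steenbrink's construction verbatim (cf.\ \cite{Steenbrink} and \cite[Ch.~11]{PetersSteenbrink}): form the double complex resolving the complex of nearby cycles $R\psi_{\pi}\Q$, equip it with the monodromy weight filtration $W_{\bullet}$ and with the Hodge filtration $F^{\bullet}$ induced by the b\^ete filtration of $\Lambda^{\bullet}_{Y_0}$, and check that the weight-graded pieces, which are assembled from Tate twists of the cohomology of $\widetilde{X}_0$, the $E_i$, the $Q_i$ and their intersections, constitute a cohomological mixed Hodge complex. The only input that is not formal, and the only point at which one uses more than the $E_1$-degeneration of the Hodge--de Rham spectral sequence of $X_0$, is that $\widetilde{X}_0$ must carry pure Hodge structures of the expected weights for which the Mayer--Vietoris maps are morphisms; this is exactly what was verified in the proof of Theorem~\ref{MHS1} using the hypothesis that $X_0$ satisfies the $\partial\bar{\partial}$-lemma, the $E_i$ and $Q_i$ being projective. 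Once the mixed Hodge complex is in hand, Deligne's formalism yields the mixed Hodge structure on $H^n_{\mathrm{lim}}$ and forces the spectral sequence of the b\^ete filtration of $\Lambda^{\bullet}_{Y_0}$, that is $E_1^{p,q}=H^q(Y_0;\Lambda^p_{Y_0})\Rightarrow H^{p+q}_{\mathrm{lim}}$, to degenerate at $E_1$ with associated filtration the Hodge filtration. This proves (i) and (ii), and in particular gives $\sum_{p+q=n}\dim H^q(Y_0;\Lambda^p_{Y_0})=\dim H^n_{\mathrm{lim}}$ for every $n$.

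For (iii) write $\Omega^{\bullet}=\Omega^{\bullet}_{\mathcal{Y}/S}(\log\mathcal{Y}_D)$, a bounded complex of locally free $\scrO_{\mathcal{Y}}$-modules, each of which is flat over $S$ since $\pi$ is flat. For every $s\in S$ the fibre hypercohomology $\mathbb{H}^n(Y_s;\Omega^{\bullet}|Y_s)$ has dimension $\dim H^n_{\mathrm{lim}}$: for $s\in S^*$ it equals $H^n(Y_s;\Cee)$, whose dimension is the Betti number of a smooth fibre, and $H^{\bullet}_{\mathrm{lim}}$ is itself the cohomology of a smooth fibre because $\mathcal{Y}^*\times_{S^*}\widetilde{S^*}$ lies over the contractible base $\widetilde{S^*}$; for $s=0$ it equals $H^n_{\mathrm{lim}}$ by the preceding theorem. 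Hence $s\mapsto\dim\mathbb{H}^n(Y_s;\Omega^{\bullet}|Y_s)$ is constant. Since $s\mapsto\dim H^q(Y_s;\Omega^p|Y_s)$ is upper semicontinuous (Grauert), after shrinking $S$ about the origin we have, for all $s$ and all $n$, the Fr\"olicher inequality $\dim\mathbb{H}^n(Y_s;\Omega^{\bullet}|Y_s)\le\sum_{p+q=n}\dim H^q(Y_s;\Omega^p|Y_s)$ together with the semicontinuity estimate $\sum_{p+q=n}\dim H^q(Y_s;\Omega^p|Y_s)\le\sum_{p+q=n}\dim H^q(Y_0;\Lambda^p_{Y_0})$; as both ends equal $\dim H^n_{\mathrm{lim}}$ these must all be equalities. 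Thus the Fr\"olicher spectral sequence of every fibre $Y_s$ degenerates at $E_1$; in particular the Hodge--de Rham spectral sequence of each smooth fibre $Y_t$, $t\in S^*$, degenerates at $E_1$, which is the argument for that degeneration promised in the Introduction. Summing over $n$ shows $\sum_{p,q}\dim H^q(Y_s;\Omega^p|Y_s)$ is independent of $s$, and since each summand is upper semicontinuous with value at $s$ no larger than its value at $0$, every $s\mapsto\dim H^q(Y_s;\Omega^p|Y_s)$ is constant. By Grauert's theorem ($S$ being reduced) the sheaves $\mathbb{R}^q\pi_*\Omega^p_{\mathcal{Y}/S}(\log\mathcal{Y}_D)$ are then locally free and compatible with base change; consequently tensoring the relative spectral sequence $E_1^{p,q}=\mathbb{R}^q\pi_*\Omega^p_{\mathcal{Y}/S}(\log\mathcal{Y}_D)\Rightarrow\mathbb{R}^{p+q}\pi_*\Omega^{\bullet}=\overline{\mathcal{H}}^{p+q}$ with $k(s)$ recovers the Fr\"olicher spectral sequence of $Y_s$, and since the latter degenerates at $E_1$ for every $s$ while $S$ is reduced, each higher differential of the relative spectral sequence is a map of locally free sheaves vanishing at every point, hence is zero. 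This gives (iii).

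The only genuinely substantial step is the one flagged above: that Steenbrink's cohomological mixed Hodge complex continues to exist when ``$X_0$ is projective'' is weakened to ``$X_0$ satisfies the $\partial\bar{\partial}$-lemma'', equivalently that the weight-graded pieces of the complex of nearby cycles carry the pure Hodge structures that the formalism demands. This is the same issue settled in the proof of Theorem~\ref{MHS1}, so it calls for careful bookkeeping rather than a new idea. Everything in (iii) is then soft, resting only on the base-change and semicontinuity theorems for the proper morphism $\pi$ and on the rigidity of maps of locally free sheaves over the reduced base $S$.
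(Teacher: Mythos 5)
Your proposal is correct and follows essentially the same route as the paper, which simply invokes the arguments of Steenbrink (Peters--Steenbrink, Theorem 11.22 and Corollaries 11.23--11.24) together with the method of proof of Theorem~\ref{MHS1}; your reduction to the one-parameter case via a transverse disk, the use of the $\partial\bar{\partial}$-lemma on the components to get a cohomological mixed Hodge complex, and the dimension-count/semicontinuity/Grauert argument for (iii) are exactly the content of those citations. Nothing to add.
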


In particular, there is a filtration of $\overline{\mathcal{H}}^n$ by holomorphic subbundles $F^\bullet$, which we will somewhat inaccurately call the \textsl{Hodge filtration}. By Lemma~\ref{prelimlemma1}, for $n=3$ this filtration is isotropic over $S^*$ (and in fact over $S$). 

\subsection{The monodromy weight filtration} There is an increasing filtration $V_\bullet$ on the complex $\Lambda^\bullet_{Y_0}$. Because $Y_0$ consists of smooth components meeting transversally along smooth divisors, it takes the following simple form
$$0 \to V_0 \to V_1 = \Lambda^\bullet_{Y_0} \to V_1/V_0 \to 0.$$
Here $V_0 \cong \Omega_{Y_0}^{\bullet}/\tau_{Y_0}^{\bullet}$ and $V_1/V_0  \cong \bigoplus_i (j_i)_*\Omega^{\bullet -1}_{Q_i}$, by \cite[(3.5)]{Friedman1983} or \cite[11.2.5]{PetersSteenbrink}. By  \cite[Theorem 11.29]{PetersSteenbrink} (and the discussion prior to the statement), we have

\begin{theorem} The homomorphism 
$$\mathbb{H}^n(Y_0; \Omega_{Y_0}^{\bullet}/\tau_{Y_0}^{\bullet}) \to \mathbb{H}^n(Y_0;\Lambda^\bullet_{Y_0})$$
is the specialization homomorphism $H^n(Y_0; \Cee) \to H^n_{\text{\rm{lim}}}$,
and it is a morphism of mixed Hodge structures. \qed
\end{theorem}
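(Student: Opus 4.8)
The plan is to deduce the statement from the classical one‑parameter result of Steenbrink, in the form \cite[Theorem 11.29]{PetersSteenbrink} together with the discussion preceding it, by restricting to a transverse slice of $S$ and checking that the extra locally trivial deformation parameters (the $\Delta^h$ factor) play no role. Concretely, the map in the statement is the one induced on hypercohomology by the inclusion of complexes $V_0 \hookrightarrow V_1 = \Lambda^\bullet_{Y_0}$ from the weight filtration $V_\bullet$, followed by the quasi‑isomorphism $V_0 = \Omega_{Y_0}^{\bullet}/\tau_{Y_0}^{\bullet} \simeq \Cee_{Y_0}$ of \cite[(1.5)]{Friedman1983}; so the first task is to identify this algebraic map with the topological specialization morphism $H^n(Y_0;\Cee) \to H^n_{\lim}$.

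First I would recall how specialization is realized topologically: one chooses a retraction of a tubular neighborhood $U$ of $Y_0$ in $\mathcal{Y}$ onto $Y_0$, and composes the resulting isomorphism $H^n(Y_0;\Cee) \cong H^n(U;\Cee)$ with restriction to a punctured neighborhood, lifted to the universal cover $\widetilde{S^*}$; under the identification $\mathbb{H}^n(Y_0;\Lambda^\bullet_{Y_0}) \cong H^n(\mathcal{Y}^*\times_{S^*}\widetilde{S^*};\Cee) \cong H^n_{\lim}$ of Theorem~\ref{MHS2}, this is compared with the algebraic map above as in \cite[\S11.2]{PetersSteenbrink}. The only point beyond the classical case is that the restriction $\Omega^\bullet_{\mathcal{Y}/S}(\log\mathcal{Y}_D)|Y_0 = \Lambda^\bullet_{Y_0}$, its filtration $V_\bullet$, the canonical extension $\overline{\mathcal{H}}^n$, and the identification of $\mathbb{H}^n(Y_0;\Lambda^\bullet_{Y_0})$ all pull back, along a disk $\Delta \to S$ transverse to $D$, to the usual one‑parameter objects. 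This is exactly the compatibility noted just before the present theorem (``if $\Delta\to S$ is transverse to $D$, then the pullback of $\Omega^\bullet_{\mathcal{Y}/S}(\log\mathcal{Y}_D)$ to $\Delta$ is the usual one parameter relative log complex''), so the classical identification of \cite[Theorem 11.29]{PetersSteenbrink} applies verbatim.

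Next, for compatibility with the Hodge filtrations: both $\Omega_{Y_0}^{\bullet}/\tau_{Y_0}^{\bullet}$ and $\Lambda^\bullet_{Y_0}$ carry their bête filtrations, the inclusion $V_0\to V_1$ is filtered degree by degree (it is the inclusion $\Omega^p_{Y_0}/\tau^p_{Y_0}\to\Lambda^p_{Y_0}$ in each degree $p$), and by the $E_1$‑degeneration in Theorem~\ref{MHS1} and Theorem~\ref{MHS2}(ii) the induced filtrations on $\mathbb{H}^n$ are precisely the Hodge filtrations on $H^n(Y_0;\Cee)$ and on $H^n_{\lim}$. Hence the specialization map is $F^\bullet$‑filtered. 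For the weight filtrations one compares the normal‑crossings weight filtration on $H^n(Y_0;\Cee)$ (coming from $V_\bullet$, i.e. from Mayer–Vietoris, as used in Theorem~\ref{MHS1}) with the monodromy weight filtration centered at $n$ on $H^n_{\lim}$; the strict compatibility of $\mathrm{sp}$ with $W_\bullet$ in this comparison, and hence the fact that $\mathrm{sp}$ is a morphism of mixed Hodge structures, is the content of \cite[Theorem 11.29]{PetersSteenbrink}, and again carries over since everything in sight is obtained by transverse pullback from the one‑parameter situation.

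The main obstacle I anticipate is not a hard computation but the bookkeeping needed to make the reduction to the classical case airtight: one must check that, after shrinking $S$, the relative log complex, its weight filtration $V_\bullet$, the canonical extensions $\overline{\mathcal{H}}^n$, and the topological model $H^n(\mathcal{Y}^*\times_{S^*}\widetilde{S^*};\Cee)$ are all compatible with restriction to a transverse disk $\Delta\subseteq S$ (and with shrinking the base). Granting the local‑freeness and base‑change statements already recorded in Theorem~\ref{MHS2}(iii), this reduction is formal, and the conclusion is then exactly \cite[Theorem 11.29]{PetersSteenbrink} (together with the arguments of \cite{Steenbrink}).
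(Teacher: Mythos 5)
Your proposal is correct and matches the paper's approach: the paper gives no independent argument here, simply citing \cite[Theorem 11.29]{PetersSteenbrink} together with the discussion preceding the theorem (the observation that pulling back $\Omega^\bullet_{\mathcal{Y}/S}(\log \mathcal{Y}_D)$ along a disk transverse to $D$ recovers the usual one-parameter relative log complex), which is exactly the reduction you carry out. Your additional bookkeeping --- identifying the algebraic map induced by $V_0\hookrightarrow V_1$ with topological specialization and checking $F^\bullet$- and $W_\bullet$-compatibility --- is just a fuller spelling-out of what that citation packages.
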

Consider now the long exact sequence associated to the short exact sequence $0\to V_0 \to V_1\to V_1/V_0 \to 0$. In particular, we get the two exact sequences of mixed Hodge structures (all groups with $\Cee$-coefficients) 
$$ 0 \to H^1(Y_0) \to  H^1_{\text{\rm{lim}}}\to \bigoplus_iH^0(Q_i)(-1) \to H^2(Y_0) \to H^2_{\text{\rm{lim}}} \to 0$$
and
$$0 \to H^3(Y_0) \to H^3_{\text{\rm{lim}}} \to \bigoplus_iH^2(Q_i)(-1) \to H^4(Y_0) \to H^4_{\text{\rm{lim}}} \to 0.$$
  The map $\bigoplus_iH^0(Q_i)(-1) \to H^2(Y_0)$ is injective, since the composite map
$$\bigoplus_iH^0(Q_i)(-1) \to H^2(Y_0) \to H^2(\widetilde{X}_0)\oplus \bigoplus_iH^2(E_i)$$
is injective (it restricts to  the Gysin map $H^0(Q_i)(-1) \to  H^2(E_i)$ on each summand). Thus $H^2_{\text{\rm{lim}}}$ has dimension $b-r+1$, and the same must be true for  $H^4_{\text{\rm{lim}}}$. Then, since $H^4(Y_0)$ has dimension $b+r$ and the dimension of $H^4_{\text{\rm{lim}}}$ is $b-r+1$,  the image of $\bigoplus_iH^2(Q_i)(-1)$ in $H^4(Y_0)$ has dimension $2r-1$ and hence the kernel of this map has dimension one. Explicitly, it is easy to check that the kernel of $\bigoplus_iH^2(Q_i)(-1) \to H^4(Y_0)\subseteq H^4(\widetilde{X}_0)\oplus \bigoplus_iH^4(E_i)$ is identified with
$$\left\{(m_1([\sigma_1]-[f_1]),\dots,  m_r([\sigma_r]-[f_r])):\sum_im_i[C_i] = 0\right\}.$$

Summarizing,

\begin{theorem}\label{MSH3} {\rm(i)} $H^1_{\text{\rm{lim}}}=H^5_{\text{\rm{lim}}}=0$.

  \smallskip
 \noindent {\rm(ii)} $H^2_{\text{\rm{lim}}}$ and $H^4_{\text{\rm{lim}}}$ are pure of weights two and four respectively and dimension $b-r+1$, with $H^2_{\text{\rm{lim}}}=  H^{1,1}_{\text{\rm{lim}}}$ and $H^4_{\text{\rm{lim}}}=  H^{2,2}_{\text{\rm{lim}}}$.
 
   \smallskip
 \noindent {\rm(iii)} There is an exact sequence of mixed Hodge structures
 $$0 \to H^3(Y_0) \to H^3_{\text{\rm{lim}}}  \to \Q(-2) \to 0.$$
 Thus the weight filtration on  $H^3_{\text{\rm{lim}}}$ is given by
 $$0 \subseteq W_2 \subseteq W_3 \subseteq W_4 = H^3_{\text{\rm{lim}}},$$
 where $W_3= H^3(Y_0)$, $W_3/W_2 \cong H^3(X_0)$, $W_2\cong \Q(-1)$ and $W_4/W_3\cong \Q(-2)$. \qed
 \end{theorem}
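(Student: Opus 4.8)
The plan is to obtain all three parts by feeding the computations of $H^\bullet(Y_0)$ from Theorem~\ref{MHS1} and Proposition~\ref{prop2.4} into the two exact sequences of mixed Hodge structures obtained above from the filtration $0\to V_0\to V_1=\Lambda^\bullet_{Y_0}\to V_1/V_0\to 0$, where $V_0\cong\Omega^\bullet_{Y_0}/\tau^\bullet_{Y_0}$ computes $H^\bullet(Y_0)$ and $\mathbb{H}^n(Y_0;V_1/V_0)\cong\bigoplus_i H^{n-1}(Q_i)(-1)$, and then to do the resulting dimension counts. The inputs I need are: $H^1(Q_i)=H^3(Q_i)=0$, $H^0(Q_i)$ and $H^4(Q_i)$ one-dimensional, $H^2(Q_i)$ two-dimensional of type $(1,1)$; from \S2, $\dim H^2(Y_0)=b+1$, $\dim H^4(Y_0)=b+r$, the purity $H^2(Y_0)=H^{1,1}(Y_0)$ and $H^4(Y_0)=H^{2,2}(Y_0)$, and the weight filtration $0\subseteq W_2\subseteq W_3=H^3(Y_0)$ on $H^3(Y_0)$ with $W_2\cong\Q(-1)$ and $W_3/W_2\cong H^3(X_0)$; and the description of the connecting maps $\bigoplus_i H^{k-2}(Q_i)(-1)\to H^k(Y_0)$ as the appropriate Gysin maps under the embedding $H^k(Y_0)\hookrightarrow H^k(\widetilde X_0)\oplus\bigoplus_i H^k(E_i)$.

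For (i) and (ii): In the first exact sequence the map $\bigoplus_i H^0(Q_i)(-1)\to H^2(Y_0)$ is injective, since on the $i$-th summand it becomes the Gysin map $H^0(Q_i)(-1)\to H^2(E_i)$, which carries the generator to the nonzero hyperplane class of $E_i$. Hence the preceding map $H^1_{\mathrm{lim}}\to\bigoplus_i H^0(Q_i)(-1)$ is zero, and since $H^1(Y_0)=0$ we get $H^1_{\mathrm{lim}}=0$; then $H^5_{\mathrm{lim}}=0$ because $H^n_{\mathrm{lim}}\cong H^n(Y_t;\Cee)$ for a nearby smooth fiber $Y_t$ (a compact complex threefold), so $\dim H^5_{\mathrm{lim}}=\dim H^1_{\mathrm{lim}}$ by Poincar\'e duality. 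The first sequence then collapses to $0\to\bigoplus_i H^0(Q_i)(-1)\to H^2(Y_0)\to H^2_{\mathrm{lim}}\to 0$, so $\dim H^2_{\mathrm{lim}}=(b+1)-r=b-r+1$, and since $H^2_{\mathrm{lim}}$ is a quotient, in the category of mixed Hodge structures, of the pure weight-two Hodge structure $H^2(Y_0)=H^{1,1}(Y_0)$, strictness forces $H^2_{\mathrm{lim}}$ to be pure of weight $2$ with $H^2_{\mathrm{lim}}=H^{1,1}_{\mathrm{lim}}$. Dually, the second sequence (using $H^3(Q_i)=0$) presents $H^4_{\mathrm{lim}}$ as a quotient mixed Hodge structure of $H^4(Y_0)=H^{2,2}(Y_0)$, hence pure of weight $4$ with $H^4_{\mathrm{lim}}=H^{2,2}_{\mathrm{lim}}$, and $\dim H^4_{\mathrm{lim}}=\dim H^4(Y_t)=\dim H^2(Y_t)=b-r+1$.

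For (iii): Since $H^1(Q_i)=0$, the second sequence shows $H^3(Y_0)\to H^3_{\mathrm{lim}}$ is injective with cokernel $K:=\ker\bigl(\bigoplus_i H^2(Q_i)(-1)\to H^4(Y_0)\bigr)$. From exactness of $\bigoplus_i H^2(Q_i)(-1)\to H^4(Y_0)\to H^4_{\mathrm{lim}}\to 0$, the image of that first map in $H^4(Y_0)$ has dimension $(b+r)-(b-r+1)=2r-1$, so $\dim K=2r-(2r-1)=1$. Being a one-dimensional sub-mixed-Hodge-structure of the pure Hodge structure $\bigoplus_i H^2(Q_i)(-1)$, which is of type $(2,2)$, and defined over $\Q$, $K$ must be isomorphic to $\Q(-2)$; thus $0\to H^3(Y_0)\to H^3_{\mathrm{lim}}\to\Q(-2)\to 0$ is exact. (One can moreover check, from the Gysin descriptions in \S2, that $K$ is spanned by $(m_i([\sigma_i]-[f_i]))_i$ where $\sum_i m_i[C_i]=0$.) Finally, strictness of morphisms of mixed Hodge structures for the weight filtration determines $W_\bullet H^3_{\mathrm{lim}}$ from $W_\bullet H^3(Y_0)$ on the sub-object and from the pure weight-four quotient $\Q(-2)$: one gets $W_2 H^3_{\mathrm{lim}}=W_2 H^3(Y_0)\cong\Q(-1)$, $W_3 H^3_{\mathrm{lim}}=H^3(Y_0)$, $W_4 H^3_{\mathrm{lim}}=H^3_{\mathrm{lim}}$, with $W_3/W_2\cong H^3(X_0)$ and $W_4/W_3\cong\Q(-2)$, as asserted.

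The argument is essentially bookkeeping; the one place where something beyond formal Hodge theory enters is the equality $\dim K=1$ in (iii), which rests on the value $\dim H^4_{\mathrm{lim}}=b-r+1$ and hence, through Proposition~\ref{prop2.4}, on the hypotheses on the classes $[C_i]$ (a single relation $\sum_i m_i[C_i]=0$ with all $m_i\neq 0$, and no $r-1$ of the $[C_i]$ linearly dependent). The subtlety to watch is keeping the signs in the Mayer--Vietoris and Gysin identifications of \S2 consistent, so that the maps $\bigoplus_i H^{k-2}(Q_i)(-1)\to H^k(Y_0)$ — and in particular their kernels and images — are the ones described.
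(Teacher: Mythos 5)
Your proposal is correct and follows essentially the same route as the paper: the two long exact sequences coming from $0\to V_0\to V_1\to V_1/V_0\to 0$, injectivity of $\bigoplus_i H^0(Q_i)(-1)\to H^2(Y_0)$ via the Gysin maps into $H^2(E_i)$, and the dimension count showing the kernel of $\bigoplus_i H^2(Q_i)(-1)\to H^4(Y_0)$ is one-dimensional. The only cosmetic difference is that you identify that kernel with $\Q(-2)$ abstractly (as a one-dimensional $\Q$-sub-Hodge structure of a type $(2,2)$ structure), while the paper also writes it down explicitly as the span of $(m_i([\sigma_i]-[f_i]))_i$.
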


 \begin{remark} (1) Somewhat more general formulas for $b_2(Y_s)$ and $b_3(Y_s)$ are given in \cite[Lemma 8.1]{Friedman1991} by comparing the Mayer-Vietoris sequences for $X_0$ and $X_t$. 
 
 \smallskip
 \noindent (2) In our main case of interest, the classes $[C_i]$ span $H^2(X_0)$ and satisfy one linear relation. Hence $b = r-1$ and thus $H^2_{\text{\rm{lim}}}=0$, i.e.\ $H^2(Y_t; \Zee)$ is torsion for $t\notin D$. 
 \end{remark}
 
 An easy argument using Theorem~\ref{MHS2} then shows:
 
 \begin{corollary} If $Y_s$ is a small smoothing of $Y_0$, then $H^i(Y_s; \scrO_{Y_s}) =H^0(Y_s; \Omega_{Y_s}^i) = 0$ for $i=1,2$. \qed
 \end{corollary}

 We relate the weight filtration to the monodromy filtration on $H^3$ as follows. Let $T$ be the monodromy of the family acting on $H^3$ and let $N = T-I$. Thus $N$ is a nilpotent matrix, and in fact $N^2 =0$. More precisely,
 
 \begin{theorem} $\Ker N = W_3 = \im H^3(Y_0)$ and $\im N = W_2$.
 \end{theorem}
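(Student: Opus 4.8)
The plan is to identify the weight filtration $W_\bullet$ on $H^3_{\text{\rm{lim}}}$ from Theorem~\ref{MSH3} with the monodromy weight filtration of $N$ centred at weight $3$, and then to read off the two assertions by linear algebra. On a disk $\Delta\to S$ transverse to $D$ the relative log complex $\Omega^\bullet_{\mathcal{Y}/S}(\log\mathcal{Y}_D)$ restricts to the usual one-parameter relative log complex of Deligne-Steenbrink, so $N$ is the (logarithm of the) monodromy of a one-parameter semistable degeneration; $N^2=0$ as already noted, because $Y_0$ has only double points. I shall use the standard fact that $N$, as an endomorphism of $H^3_{\text{\rm{lim}}}$, is a morphism of mixed Hodge structures of type $(-1,-1)$; in particular $N(W_k)\subseteq W_{k-2}$ for all $k$, so that $W_4=H^3_{\text{\rm{lim}}}$ and $W_1=0$ (Theorem~\ref{MSH3}(iii)) already give $\im N\subseteq W_2$ and $W_3\subseteq\Ker N$.

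The key input is that $W_\bullet$ is the monodromy weight filtration of $N$. Since the components $\widetilde{X}_0$, $E_i$ and the double locus $\coprod_iQ_i$ of $Y_0$ all satisfy the $\partial\bar{\partial}$-lemma (for $\widetilde{X}_0$ this is part of the standing hypotheses of the section; $E_i$ and $Q_i$ are projective), Steenbrink's construction of the limiting mixed Hodge structure through the weight spectral sequence (\cite{Steenbrink}; \cite[\S11, in particular Theorem 11.22]{PetersSteenbrink}) goes through exactly as in the proofs of Theorems~\ref{MHS1} and~\ref{MHS2}, and its weight filtration is by construction the monodromy weight filtration relative to the weight $3$. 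Concretely, on the $E_1$-page of that spectral sequence $N$ acts, up to a Tate twist, as the identity on the double-locus cohomology $\bigoplus_iH^2(Q_i)$, and hence descends to an isomorphism $W_4/W_3\xrightarrow{\ \sim\ }W_2$ (recall $W_1=0$, so that $W_2=W_2/W_1$).

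Granting this, the conclusion is immediate: since $W_3\subseteq\Ker N$, the map $N$ factors as $H^3_{\text{\rm{lim}}}=W_4\twoheadrightarrow W_4/W_3\xrightarrow{\ \sim\ }W_2\hookrightarrow H^3_{\text{\rm{lim}}}$, whence $\Ker N=W_3$ and $\im N=W_2$; and $W_3=\im\bigl(H^3(Y_0)\to H^3_{\text{\rm{lim}}}\bigr)$ is precisely the content of Theorem~\ref{MSH3}(iii). As a consistency check, $\dim W_2=\dim W_4/W_3=1$ since $W_2\cong\Q(-1)$ and $W_4/W_3\cong\Q(-2)$, so $N$ necessarily has rank one.

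The only genuinely nontrivial point, and hence the main obstacle, is the identification of $W_\bullet$ with the monodromy weight filtration — equivalently, the non-vanishing of $N$ on $W_4/W_3$, i.e.\ that the family has nontrivial monodromy on $H^3$ even though the limiting mixed Hodge structures on $H^2$ and $H^4$ are pure. One cannot read $N\neq0$ off the shape of $W_\bullet$ alone: were $N$ zero, $H^3_{\text{\rm{lim}}}$ would be pure of weight $3$, contradicting $W_2\cong\Q(-1)\neq0$, but justifying the implication ``$N=0\Rightarrow H^3_{\text{\rm{lim}}}$ is pure'' already rests on the same theory. The honest route is therefore to invoke Steenbrink's weight spectral sequence directly; its hypotheses — purity of the Hodge structures on the strata of $Y_0$ and the fact that the maps between them are morphisms of Hodge structures — were arranged in the earlier subsections, and the rest of the argument is formal.
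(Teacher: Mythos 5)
Your overall skeleton matches the paper's: $N$ is a morphism of mixed Hodge structures of type $(-1,-1)$, so $W_3\subseteq\Ker N$ and $\im N\subseteq W_2$; since $W_2$ and $W_4/W_3$ are both one-dimensional, everything reduces to showing $N\neq 0$. But that last point is exactly where your argument has a genuine gap. You justify it by saying that the weight filtration of the limiting mixed Hodge structure ``is by construction the monodromy weight filtration'' and that $N$, acting as $\pm\Id$ on $\bigoplus_iH^2(Q_i)$ at the level of the weight spectral sequence, ``hence descends to an isomorphism $W_4/W_3\to W_2$.'' Neither claim is formal. The identification of the Steenbrink weight filtration with the monodromy weight filtration is a theorem whose proof in the projective case uses hard Lefschetz and polarizations on the strata; it is not available for free in this non-K\"ahler setting, and invoking it here is circular, since the statement $N\colon \mathrm{Gr}^W_4\xrightarrow{\sim}\mathrm{Gr}^W_2$ is precisely what is to be proved. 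More concretely, the map induced by $\pm\Id$ on $\bigoplus_iH^2(Q_i)$ goes from
$$\Ker\Bigl(\bigoplus_iH^2(Q_i)\to H^4(\widetilde{X}_0)\oplus\bigoplus_iH^4(E_i)\Bigr)\ \text{to}\ \Coker\Bigl(H^2(\widetilde{X}_0)\oplus\bigoplus_iH^2(E_i)\to\bigoplus_iH^2(Q_i)\Bigr),$$
i.e.\ it is the composite of an inclusion of a subspace with a projection onto a quotient of the same ambient space. Such a composite can perfectly well be zero; being induced by the identity proves nothing by itself.

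What is actually needed, and what the paper supplies, is a computation showing that the generator $(m_1([\sigma_1]-[f_1]),\dots,m_r([\sigma_r]-[f_r]))$ of the kernel does not lie in the image of the restriction map, equivalently that $(m_1,\dots,m_r)$ is not of the form $((\xi\cdot[C_1]),\dots,(\xi\cdot[C_r]))$ for $\xi\in H^2(X_0)$. This follows because $\sum_im_i[C_i]=0$ forces $(m_1,\dots,m_r)$ to be orthogonal to that subspace under the standard inner product on $\Q^r$ — this is the one place where the hypothesis on the relation among the $[C_i]$ with all $m_i\neq 0$ genuinely enters, and your write-up never uses it. The paper's primary argument is even more direct and avoids the weight spectral sequence altogether: by Picard--Lefschetz, $T(\alpha)=\alpha+\sum_i2\langle\alpha,\xi_i\rangle\xi_i$ with each vanishing cycle $\xi_i=r_i\xi$ a rational multiple of a single class $\xi$ of infinite order and not all $r_i=0$, so $T(\alpha)=\alpha+r\langle\alpha,\xi\rangle\xi$ with $r>0$ and hence $T\neq I$. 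You should supply one of these two arguments; without it the proof is incomplete at its only nontrivial step.
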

 \begin{proof} By general theory \cite[Theorem 11.28]{PetersSteenbrink}, $N$ is a morphism of mixed Hodge structures of type $(-1, -1)$, and hence $W_3 \subseteq \Ker N$ and $\im N \subseteq W_2$. Thus $N$ induces a homomorphism of one dimensional $\Q$--vector spaces $W_4/W_3 \to W_2$.  To see the statement of the theorem, it therefore   suffices to prove that $N\neq 0$, or equivalently that $T\neq I$. This follows from Picard-Lefschetz theory: associated to each double point $p_i$, is a vanishing cycle $\xi_i$, viewed as an element of cohomology. By assumption, there exists a $\xi\in H^3(Y_t;\Zee)$ of infinite order such that each $\xi_i$ is a multiple $r_i\xi$ of $\xi$ and the $\Q$-span of the $\xi_i$ is equal to $\Q\cdot \xi$, so that not all of the $r_i$ can be $0$. By the Picard-Lefschetz formula,
 $$T(\alpha) = \alpha + \sum_i2 \langle \alpha, \xi_i\rangle \xi_i = \alpha + \left(\sum_i2r_i^2 \right)\langle \alpha, \xi \rangle \xi,$$
 where the $2$ reflects the base change of order $2$ in the passage from deformations of $\overline{X}_0$ to deformations of the semistable model $Y_0$.  Thus there exists a positive rational number $r$ such that  $$T(\alpha) = \alpha + r\langle \alpha, \xi\rangle \xi$$
and so $T\neq I$.  

We can give a direct argument that $N\colon W_4/W_3\to W_2$ is an isomorphism as follows.  The action of $N$ on the graded pieces $W_4/W_3 \to W_2$ is calculated in  \cite[11.2.5]{PetersSteenbrink}, and one checks (cf.\ \cite[\S11.3]{PetersSteenbrink}) that it is  the  homomorphism (induced by $\pm \Id \colon \bigoplus _iH^2(Q_i) \to \bigoplus _iH^2(Q_i)$):
\begin{gather*}
\Ker\left( \bigoplus _iH^2(Q_i) \to H^4(\widetilde{X}_0) \oplus \bigoplus_iH^4(E_i)\right)\\
\to \Coker\left(H^2(\widetilde{X}_0) \oplus \bigoplus_iH^2(E_i) \to  \bigoplus _iH^2(Q_i)\right).
\end{gather*}
To see that  $N \colon W_4/W_3 \to W_2$ is an isomorphism, using the comments before Proposition~\ref{prop2.4} and Theorem~\ref{MSH3}, it suffices to show that, if $(m_1, \dots, m_r)\in \Q^r$ is a nonzero vector such that $\sum_im_i[C_i] = 0$ in $H^4(X_0)$, then $(m_1, \dots, m_r)$ is not in the subspace 
$$I= \{((\xi\cdot [C_1]),  \dots, (\xi\cdot [C_r]):\xi\in H^2(X_0)\}.$$
But $(m_1, \dots, m_r)$ is orthogonal to $I$ under the standard inner product on $\Q^r$, so that $(m_1, \dots, m_r)\in I$ $\implies$ $(m_1, \dots, m_r) =0$.
 \end{proof}
 
 \begin{proposition} With the alternating nondegenerate pairing $\langle \cdot, \cdot \rangle$ on $H^3_{\text{\rm{lim}}} \cong H^3(Y_t)$, $W_2^\perp = W_3$. Hence, if $\xi$ is a generator for $W_2$ and $\eta$ generates $W_4/W_3$, then $\langle \xi, \eta \rangle\neq 0$.
 \end{proposition}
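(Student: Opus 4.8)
The plan is to derive the proposition from two facts already in hand: the monodromy $T$ is an isometry of the alternating pairing $\langle\cdot,\cdot\rangle$ on $H^3_{\text{lim}}\cong H^3(Y_t)$ — because this pairing is the flat cup-product pairing carried by the local system $\underline{H}^3$, hence invariant under monodromy — and, by the preceding theorem, $W_2 = \im N$ and $W_3 = \Ker N$, where $N = T - I$ (with $N^2 = 0$).

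First I would record that $N$ is infinitesimally skew, i.e.\ $\langle Nx, y\rangle = -\langle x, Ny\rangle$ for all $x, y\in H^3_{\text{lim}}$. Expanding $\langle (I+N)x, (I+N)y\rangle = \langle x, y\rangle$ gives
$$\langle Nx, y\rangle + \langle x, Ny\rangle + \langle Nx, Ny\rangle = 0,$$
and the cross term $\langle Nx, Ny\rangle$ vanishes: either because $\im N = W_2$ is one-dimensional (Theorem~\ref{MSH3}) and an alternating form on a line is zero, or because substituting $Ny$ for $y$ in the displayed identity and using $N^2 = 0$ forces $\langle Nx, Ny\rangle = 0$ directly. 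With skew-symmetry in hand, nondegeneracy of $\langle\cdot,\cdot\rangle$ yields
$$W_2^\perp = (\im N)^\perp = \{v : \langle v, Nx\rangle = 0 \text{ for all } x\} = \{v : \langle Nv, x\rangle = 0 \text{ for all } x\} = \Ker N = W_3,$$
which is the first assertion of the proposition. (Two variants: one may instead invoke the Picard--Lefschetz formula $N\alpha = r\langle\alpha,\xi\rangle\xi$ with $r > 0$ from the proof of the preceding theorem, which shows at once that $Nv = 0$ iff $\langle v,\xi\rangle = 0$, hence $W_3 = \Ker N = \xi^\perp = W_2^\perp$; or one may argue by weights, since $\langle\cdot,\cdot\rangle$ is a morphism of mixed Hodge structures into $\Q(-3)$, which is pure of weight $6$, so that $\langle W_a, W_b\rangle = 0$ when $a+b < 6$, and then a dimension count closes the argument.)

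For the last sentence, let $\xi$ span $W_2$ and choose $\eta\in W_4 = H^3_{\text{lim}}$ representing a generator of $W_4/W_3$, so that $\eta\notin W_3$. Because $\langle\cdot,\cdot\rangle$ is alternating, $W_2^\perp = \xi^\perp = \{v : \langle\xi, v\rangle = 0\}$, and by the previous step this equals $W_3$; hence $\eta\notin W_3$ is precisely the statement $\langle\xi,\eta\rangle\neq 0$.

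All the substantive input has already been established, so there is no genuine obstacle; the only step needing a moment's care is the vanishing of the cross term $\langle Nx, Ny\rangle$ in the isometry relation — the place where one-dimensionality of $W_2$ (equivalently $N^2 = 0$) enters — after which the proof is a two-line manipulation with $\im N$, $\Ker N$, and nondegeneracy.
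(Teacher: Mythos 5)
Your proof is correct and follows the same lines as the paper, which gives both of your arguments: the Picard--Lefschetz formula $N(\alpha)=r\langle\alpha,\xi\rangle\xi$ (so $\Ker N=\xi^\perp$) as the main route, and the skew-symmetry $\langle N\alpha,\beta\rangle=-\langle\alpha,N\beta\rangle$ as the parenthetical alternative. Your careful justification of the vanishing of the cross term $\langle Nx,Ny\rangle$ via $N^2=0$ is a nice touch, but there is nothing substantively different here.
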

 \begin{proof} The first statement is clear since $N(\alpha) = r\langle \alpha, \xi\rangle \xi$, with $\xi \neq0$, so that $\im N = \Cee \cdot \xi$ and $\Ker N = \xi^\perp$. (It also follows from the fact that $\langle N(\alpha), \beta \rangle = -\langle \alpha , N(\beta) \rangle$.)
 The final statement follows because $\langle \cdot, \cdot \rangle$ is nondegenerate. 
 \end{proof}

\subsection{The differential of the period map} The flat vector bundle $\mathcal{H}^3$ has an integrable connection $\nabla$ and a decreasing  filtration $F^\bullet$ by holomorphic subbundles. Moreover, for every $s\in S^*$, the associated graded 
$$F^p_s/F^{p+1}_s \cong H^{3-p}(Y_s; \Omega^p_{Y_s}).$$
In any small simply connected open subset $U$ of $S^*$, or on the universal cover $\widetilde{S^*}$, the restriction or pullback of $\mathcal{H}^3$ is canonically trivialized by $\nabla$. Given such a trivialization, we define the \textsl{period map} on $U$ to be the holomorphic map from $U$ to an appropriate flag manifold defined by sending $s\in U$ to the subspaces $F^p_s$ of $H^3(Y_s)$. By openness of versality, the tangent space to $S^*$ at $s$ is identified with $H^1(Y_s; T_{Y_s})$; more precisely, the Kodaira-Spencer map $T_{S^*,s}\to H^1(Y_s; T_{Y_s})$ is an isomorphism. The standard arguments in the K\"ahler case (see e.g.\ \cite[Proposition 7.7]{DeligneRSP}) show that the differential of the period map is computed at the point $s$ via the natural homomorphism
$$H^1(Y_s; T_{Y_s}) \to \bigoplus _p\Hom(H^{3-p}(Y_s; \Omega^p_{Y_s}), H^{3-p+1}(Y_s; \Omega^{p-1}_{Y_s}))$$ 
given by  cup product and contraction. A similar statement holds globally: the differential of the period map is given by the homomorphism induced by cup product:
$$R^1\pi_*T_{\mathcal{Y}^*/S^*} \to  \bigoplus _p\Hom(R^{3-p}\pi_* \Omega^p_{\mathcal{Y}^*/S^*} , R^{3-p+1}\pi_*\Omega^{p-1}_{\mathcal{Y}^*/S^*}).$$
Since $\Omega^3_{Y_t} \cong \scrO_{Y_t}$, the cup product homomorphism
$$H^1(Y_t; T_{Y_t}) \to \Hom(H^0(Y_t; \Omega^3_{Y_t}), H^1(Y_t; \Omega^2_{Y_t}))\cong H^1(Y_t; \Omega^2_{Y_t})$$ 
is an isomorphism. Similarly, after trivializing the line bundle $R^0\pi_* \Omega^3_{\mathcal{Y}^*/S^*}$, i.e.\ after choosing an everywhere generating  section of $\Omega^3_{\mathcal{Y}^*/S^*}$, the cup product homomorphism
$$R^1\pi_*T_{\mathcal{Y}^*/S^*} \to \Hom(R^0\pi_* \Omega^3_{\mathcal{Y}^*/S^*} , R^1\pi_*\Omega^2_{\mathcal{Y}^*/S^*})\cong R^1\pi_*\Omega^2_{\mathcal{Y}^*/S^*}$$
is an isomorphism.

\section{The variational argument}

\subsection{The basic setup} We begin by abstracting the situation of \S2. Let $H$ be a vector space with a nondegenerate  alternating bilinear form $\langle \cdot, \cdot \rangle$ and a standard symplectic basis $e_0, \dots, e_{h+1}, f_0, \dots, f_{h+1}$ (i.e.\ $\langle e_i, f_j \rangle =\delta_{ij}$, and $\langle e_i, e_j \rangle =\langle f_i, f_j \rangle = 0$ for all $i,j$). We assume that $H$ is in fact defined over $\Q$, i.e.\ is the complexification of a $\Q$-vector space $H_\Q$, and that the above basis is a $\Q$-basis. In particular, $H$ is defined over $\Ar$ so that complex conjugation is defined on $H$. Let $N\colon H \to H$ be the rational linear map defined by: $N(e_i) = 0$ for all $i$, $N(f_i) =0$ for $i\neq h+1$, and $N(f_{h+1}) = e_{h+1}$. Then
$$\langle N(\alpha), \beta\rangle + \langle \alpha, N(\beta)\rangle = 0$$
for all $\alpha, \beta \in H$. Define
$$W_2 = \Cee e_{h+1} \subseteq W_3 = \text{  span } \{e_0, \dots, e_{h+1}, f_0, \dots,f_h\} \subseteq W_4 = H.$$

Let $S = \Delta^h\times \Delta$, with coordinates $t_1, \dots, t_h, q$,  let $S^* = \Delta^h\times \Delta^*\subseteq S$, and let $D= \Delta^h\times\{0\}$. We shall abbreviate $(t_1, \dots,t_h, q)$ by $(t,q)$. Write $q = e^{2\pi \sqrt{-1}z}$, where $z$ is the usual coordinate on the upper half plane $\mathfrak{H} = \widetilde{\Delta ^*}$; equivalently,
$$z = \frac{\log q}{2\pi \sqrt{-1}}.$$
Let   $\varphi \colon \widetilde{S ^*}=\Delta^h\times\mathfrak{H}\to S^*$ be the universal cover map. 
Setting $T =\exp N$ defines an action of $\pi_1(S^*) \cong \Zee$ on $H$, where $1$ acts as $T$,  and hence a local system $\underline{H}$ over $S^*$. Let $\mathcal{H}= \underline{H} \otimes _\Cee\scrO_{S^*}$ be the corresponding holomorphic vector bundle over $S^*$ and $\overline{\mathcal{H}}$ the canonical extension of $\mathcal{H}$ to $S$. By \cite{DeligneRSP}, we can take $\overline{\mathcal{H}} \cong H\otimes_\Cee \scrO_S$, the trivial holomorphic vector bundle over $S$ with fiber $H$, with the meromorphic connection $\nabla$ whose associated connection $1$-form is 
$\displaystyle  -\frac{N}{2\pi \sqrt{-1}}\frac{dq}{q}$. 
The bundle $\varphi^*\mathcal{H}$ is trivialized by $\nabla$ and the fiber at any point of $\widetilde{S^*}$ is identified with $H$. The fiber of $\mathcal{H}$ at any point is identified with $H$ modulo the action of $\{T^k=\exp(kN): k\in \Zee\}$. The fiber of $\overline{\mathcal{H}}$ over $0\in D$ is identified with $H$ up to the action of the unipotent group $\{\exp(\lambda N): \lambda\in \Cee\}$. 
The local flat sections of $\mathcal{H}$ over $S^*$ are then sections locally of the form $\exp(zN)v$, where $v\in H$. A holomorphic section $\sigma$ of $\mathcal{H}$, viewed as a holomorphic section $\sigma$ of the trivial bundle $\varphi^*\mathcal{H}\cong H\otimes _\Cee \scrO_{\widetilde{S^*}}$ with the invariance property $\sigma(t, z+1) = T\sigma(t,z)$,  extends to a holomorphic section of $\overline{\mathcal{H}}$ if and only if the   section $\exp(-zN)\varphi^*\sigma$, viewed as a holomorphic section of $\varphi^*\mathcal{H}$, extends to a single-valued holomorphic function from $S$ to $H$. Given a holomorphic section $\sigma$ of $\mathcal{H}$, we denote $\nabla_{\partial /\partial t_i}\sigma$ by $\displaystyle \frac{\partial \sigma}{\partial t_i}$, and similarly for the coordinate $q$.   

Finally, we are given a filtration of $\overline{\mathcal{H}}$ by holomorphic subbundles $F^\bullet$. It satisfies: 
\begin{itemize}
\item[\rm(i)] $F^3$ is a line bundle, hence $F^3 = \scrO_S\cdot \tilde\omega(t,q)$ for some nowhere vanishing holomorphic function $\tilde\omega(t,q)$ with values in $H$. 
\item[\rm(ii)] Over $S^*$, a basis for $F^2|S^*$ is given by
$$\tilde\omega, \frac{\partial \tilde\omega}{\partial t_1}, \dots , \frac{\partial \tilde\omega}{\partial t_h}, \frac{\partial \tilde\omega}{\partial q}.$$
 We can also replace the last term  $\displaystyle\frac{\partial \tilde\omega}{\partial q}$ by $\displaystyle\frac{\partial \tilde\omega}{\partial z}$ on $\varphi^*\mathcal{H}$, since
 $$\frac{1}{2\pi \sqrt{-1}} \frac{\partial}{\partial z} = q\frac{\partial}{\partial q}.$$ 
\item[\rm(iii)] (First Hodge-Riemann bilinear relation) With respect to the form $\langle \cdot, \cdot \rangle$, $(F^3)^\perp = F^1$ and $(F^2)^\perp = F^2$, so that $F^2$ is a maximal isotropic subbundle.
\item[\rm(iv)] For $s\in D = \Delta^h\times \{0\}$, the filtrations $F^\bullet_s$ and $W_\bullet$ define a mixed Hodge structure on $H$, with $W_2\cong \Q(-1)$, $W_4/W_3\cong \Q(-2)$, and $W_3/W_2$ is a pure weight three Hodge structure with $h^{3,0} = h^{0,3} = 1$, and hence $h^{2,1} = h^{1,2} = h$.
\end{itemize}

As a consequence, we record the following facts:

\begin{lemma}\label{Fprops} Under the above assumptions,
\begin{itemize}
\item[\rm(i)] The subbundle $F^2$ has rank $h+2$.
\item[\rm(ii)] For $s\in D$, $F^3_s\subseteq W_3$ and $F^2_s+W_3 = W_4$. Equivalently, there exists a $v\in F^2_s$ such that, writing $v$ as a linear combination of the $e_i, f_i$,  the coefficient of $f_{h+1}$ in $v$ is $1$.
\item[\rm(iii)] For $s\in D$, $F^2_s\cap W_2 = 0$.  \qed
\end{itemize}
\end{lemma}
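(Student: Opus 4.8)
The plan is to derive all three items from one principle: by property~(iv), for each $s\in D$ the pair $(W_\bullet,F^\bullet_s)$ is a genuine mixed Hodge structure on $H$, so Deligne's canonical bigrading $H=\bigoplus_{p,q}I^{p,q}$ is at our disposal, with $W_k=\bigoplus_{p+q\le k}I^{p,q}$ and $F^p_s=\bigoplus_{p'\ge p}I^{p',q}$. Since the prescribed weight filtration has $W_1=0$ and $W_4=H$, we get $I^{p,q}=0$ unless $2\le p+q\le 4$; and the prescribed extremal graded pieces pin down the ends: $\operatorname{gr}^W_2\cong\Q(-1)$ is of type $(1,1)$, so among $p+q=2$ only $I^{1,1}$ is nonzero and $W_2=I^{1,1}$, while $\operatorname{gr}^W_4=W_4/W_3\cong\Q(-2)$ is of type $(2,2)$, so among $p+q=4$ only $I^{2,2}$ is nonzero, it is one-dimensional, and $H=W_3\oplus I^{2,2}$. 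Item~(i), by contrast, uses only the symplectic property~(iii): $H$ has dimension $2(h+2)$, and for any subspace $U$ one has $\dim U+\dim U^\perp=2(h+2)$, so $(F^2_s)^\perp=F^2_s$ forces $\dim F^2_s=h+2$ at every $s\in S$; as $F^2$ is a holomorphic subbundle, it has rank $h+2$. (Equivalently, the $h+2$ local generators in property~(ii) bound the rank above, and maximal isotropy gives equality.)

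For item~(ii), fix $s\in D$ and read off the bigrading. The only summands of $F^3_s=\bigoplus_{p'\ge 3}I^{p',q}$ that could be nonzero are $I^{3,0}$, $I^{3,1}$, $I^{4,0}$; but $I^{3,1}=I^{4,0}=0$, being weight-$4$ summands distinct from $I^{2,2}$, so $F^3_s=I^{3,0}\subseteq W_3$. On the other hand $F^2_s\supseteq I^{2,2}$, hence $F^2_s+W_3=H=W_4$. The stated reformulation is then immediate: writing $H=W_3\oplus\Cee f_{h+1}$, the condition $F^2_s+W_3=H$ says exactly that $F^2_s$ contains a vector $v$ whose $f_{h+1}$-coefficient is nonzero, which we may rescale to $1$.

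For item~(iii), again fix $s\in D$; then $F^2_s\cap W_2=\bigl(\bigoplus_{p'\ge 2}I^{p',q}\bigr)\cap I^{1,1}=0$. I do not expect any genuine obstacle in this lemma: the content is the formalism of mixed Hodge structures together with one line of symplectic linear algebra. The only point deserving care is the appeal to property~(iv) — one should record that it supplies an honest mixed Hodge structure fiberwise over $D$ with the indicated graded quotients, so that the compatibility of $F^\bullet$ with $W_\bullet$ (equivalently, Deligne's splitting; equivalently, the fact that $F^\bullet$ induces the Hodge filtration on each $\operatorname{gr}^W_k$ and on each weight-subspace) is legitimately available — after which everything reduces to bookkeeping with the vanishing of $I^{p,q}$ outside $2\le p+q\le 4$ and the types of $\operatorname{gr}^W_2$ and $\operatorname{gr}^W_4$.
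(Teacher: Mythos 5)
Your proof is correct and matches what the paper intends: the lemma is stated with no proof (a bare \qed), precisely because it is immediate bookkeeping from the listed properties, namely maximal isotropy of $F^2$ for rank $h+2$ in (i), and the fact that for $s\in D$ the pair $(W_\bullet,F^\bullet_s)$ is a mixed Hodge structure whose $\operatorname{gr}^W_2$ and $\operatorname{gr}^W_4$ are of types $(1,1)$ and $(2,2)$ for (ii) and (iii). Your invocation of Deligne's $I^{p,q}$ splitting is slightly heavier machinery than strictly needed (the statements follow directly from $F^\bullet_s$ inducing the correct pure Hodge filtrations on $\operatorname{gr}^W_2$ and $\operatorname{gr}^W_4$), but it is a valid and complete justification.
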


\subsection{The bundle $\overline{\mathcal{H}}_\#$} By the above, $e_{h+1}$ defines a global holomorphic section of $\mathcal{H}$ and of $\overline{\mathcal{H}}$. We define $\overline{\mathcal{H}}_\# = (e_{h+1})^\perp/\scrO_S\cdot e_{h+1}$. It is a flat vector bundle canonically isomorphic to $H_\#\otimes _\Cee\scrO_S$, where 
$$H_\# = (e_{h+1})^\perp/\Cee\cdot e_{h+1}= W_3/W_2.$$ 
Here we take $(e_{h+1})^\perp$ inside the vector space $H$, not the holomorphic bundle $\overline{\mathcal{H}}$. For each $s\in D$, the filtration $F^\bullet_s$ induces a pure weight three Hodge structure on $H_\#$.
The bundle  $\overline{\mathcal{H}}_\#$ has rank $2h+2$.  There is an induced nondegenerate alternating bilinear form $\langle \cdot, \cdot \rangle$  on $H_\#$ and on $\overline{\mathcal{H}}_\#$.  

Define $F^2_\#$ to be the image of $F^2\cap (e_{h+1})^\perp$ in $\overline{\mathcal{H}}_\#$.

\begin{lemma}\label{Fsharp} Possibly after shrinking $S$, $F^2_\#$ is a holomorphic isotropic subbundle of $\overline{\mathcal{H}}_\#$ of rank $h+1$, and, as $C^\infty$ bundles, $F^2_\# \oplus \overline{F}^2_\#\cong H_\#\otimes_\Cee C^\infty_S$.
\end{lemma}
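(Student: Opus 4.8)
The plan is to verify the three assertions of Lemma~\ref{Fsharp} --- that $F^2_\#$ is a subbundle, that it has rank $h+1$, and that it is isotropic, and finally that $F^2_\# \oplus \overline{F}^2_\#$ fills up $H_\#\otimes C^\infty_S$ --- by reducing everything to the fiberwise linear algebra already recorded in Lemma~\ref{Fprops}, and then invoking semicontinuity to upgrade pointwise statements on $D$ (or at a single point) to statements on a neighborhood.

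First I would analyze the map $F^2 \cap (e_{h+1})^\perp \to \overline{\mathcal{H}}_\#$ fiberwise. Over $S^*$ and over the generic point, the first Hodge--Riemann relation (iii) says $F^2 = (F^2)^\perp$, so $e_{h+1} \in F^3 \subseteq F^2 = (F^2)^\perp$ forces $F^2 \subseteq (e_{h+1})^\perp$; thus $F^2\cap(e_{h+1})^\perp = F^2$ has rank $h+2$, and its image in the quotient by $\scrO_S\cdot e_{h+1}$ has rank $h+1$, since $e_{h+1}\in F^3\subseteq F^2$ is a nonzero section. Over $D$ I would argue the same way using Lemma~\ref{Fprops}: part (iii) gives $F^2_s \cap W_2 = 0$, i.e. $e_{h+1}\notin F^2_s$, and one still has $F^2_s\subseteq (e_{h+1})^\perp$ because $e_{h+1}\in F^3_s\subseteq F^2_s=(F^2_s)^\perp$ (the orthogonality relation (iii) holds on all of $S$, being a closed condition). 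Hence at \emph{every} point $s\in S$ the composite $F^2_s\cap(e_{h+1})^\perp = F^2_s \hookrightarrow (e_{h+1})^\perp \twoheadrightarrow H_\#$ is injective with image of dimension $h+2$ over $S^*$ and $h+1$ over $D$ --- wait, the image always has dimension $h+1$: over $S^*$, $e_{h+1}\in F^2_s$ so the kernel of $F^2_s\to H_\#$ is $\Cee e_{h+1}$, dropping the rank to $h+1$; over $D$, $e_{h+1}\notin F^2_s$ but also $F^2_s + W_3 = W_4$ by (ii), and one checks $F^2_s\cap W_2 = 0$ forces the image in $W_3/W_2 = H_\#$ to again have dimension $h+1$ once one accounts for the fact that $F^2_s$ may not lie in $(e_{h+1})^\perp\cap W_3$; here I would use that $(e_{h+1})^\perp \supseteq W_3$ and that the part of $F^2_s$ outside $W_3$ maps isomorphically onto $W_4/W_3$, which is killed in passing to $H_\#$. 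So the rank is constantly $h+1$, and therefore $F^2_\#$ --- defined as the image of the coherent sheaf $F^2\cap(e_{h+1})^\perp$ --- is a subbundle of $\overline{\mathcal{H}}_\#$ of rank $h+1$ on a neighborhood of $D$.

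Next, isotropy: since $F^2$ is isotropic for $\langle\cdot,\cdot\rangle$ on $\overline{\mathcal{H}}$ and $e_{h+1}\in(e_{h+1})^\perp$ is in the radical of the induced form on $(e_{h+1})^\perp$, the induced form on $H_\#$ descends, and the image of any isotropic subspace contained in $(e_{h+1})^\perp$ is isotropic; applying this fiberwise gives that $F^2_\#$ is isotropic, and since $\dim H_\# = 2h+2$ and $\operatorname{rank} F^2_\# = h+1$, it is in fact a maximal (Lagrangian) subbundle. Finally, for the $C^\infty$ splitting $F^2_\# \oplus \overline{F}^2_\# \cong H_\#\otimes C^\infty_S$: at a point $s\in D$ this is exactly the statement that $F^\bullet_s$ and $\overline{F}^\bullet_s$ induce the pure weight three Hodge structure on $H_\# = W_3/W_2$ asserted in hypothesis (iv), which gives $H_\# = F^2_{\#,s}\oplus \overline{F}^1_{\#,s} = F^2_{\#,s}\oplus\overline{F}^2_{\#,s}$ (using $h^{2,1}=h$, so $F^2_\#$ and $\overline{F}^2_\#$ each have dimension $h+1$ and $F^{1}_\# = F^2_\#$ on the graded piece since there is no $(3,0)$ or $(0,3)$ part surviving in $W_3/W_2$... more precisely $F^2_\# = F^{3}_\#\oplus(\text{the }(2,1)\text{ part})$ which is $(h+1)$-dimensional). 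The condition ``$F^2_{\#,s}\cap\overline{F}^2_{\#,s} = 0$'' is open in $s$, so it persists on a neighborhood of $s$, hence (possibly shrinking $S$) on all of $S$; combined with the dimension count this gives the $C^\infty$ direct sum decomposition everywhere.

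The main obstacle I anticipate is the bookkeeping at points of $D$: off $D$ everything follows cleanly from $e_{h+1}\in F^3\subseteq F^2 = (F^2)^\perp$, but over $D$ one has $e_{h+1}\notin F^2_s$, and one must carefully combine all three parts of Lemma~\ref{Fprops} --- $F^3_s\subseteq W_3$, $F^2_s + W_3 = W_4$, and $F^2_s\cap W_2 = 0$ --- to see that the composite $F^2_s \to H_\#$ has the right rank and that no spurious kernel or cokernel appears. The other point requiring a little care is that ``subbundle'' requires constancy of rank, so one must confirm the rank is $h+1$ at \emph{every} point, not just generically, before concluding $F^2_\#$ is locally free; here the openness of the maximal-rank locus together with the lower bound $h+1$ coming from the fiberwise linear algebra does the job, which is why the lemma is stated ``possibly after shrinking $S$.''
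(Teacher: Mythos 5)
Your overall strategy --- reduce everything to the fiberwise linear algebra of Lemma~\ref{Fprops} on $D$ and then shrink $S$ by openness --- is the right one and is what the paper does. But the proposal is built around a false premise that you use repeatedly: the claim that $e_{h+1}\in F^3\subseteq F^2$, hence $F^2=(F^2)^\perp\subseteq (e_{h+1})^\perp$ and $F^2\cap(e_{h+1})^\perp=F^2$ has rank $h+2$. There is no reason for $e_{h+1}$ to lie in $F^3$; in fact it cannot. The class $e_{h+1}$ spans $W_2\cong\Q(-1)$, which is of type $(1,1)$ in the limiting mixed Hodge structure, so $e_{h+1}\in F^1\setminus F^2$; this is exactly part (iii) of Lemma~\ref{Fprops} ($F^2_s\cap W_2=0$). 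Likewise the conclusion $F^2_s\subseteq(e_{h+1})^\perp$ directly contradicts part (ii) of Lemma~\ref{Fprops}, since $(e_{h+1})^\perp=W_3$ and (ii) says $F^2_s+W_3=W_4$, i.e.\ $\langle e_{h+1},F^2_s\rangle\neq 0$. At one point you assert in the same sentence that $e_{h+1}\notin F^2_s$ (from (iii)) \emph{and} that $e_{h+1}\in F^3_s\subseteq F^2_s$, which is an outright contradiction; and your rank bookkeeping over $S^*$ (kernel $\Cee e_{h+1}$ of $F^2_s\to H_\#$) rests on the same false inclusion.

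The correct mechanism is the one you stumble onto in your ``over $D$'' accounting, and it is the paper's proof: by (ii), $F^2_s\not\subseteq(e_{h+1})^\perp$ for $s\in D$, so $F^2\cap(e_{h+1})^\perp$ has constant rank $h+1$ near $D$ (a genuine hyperplane cut of the rank-$(h+2)$ bundle $F^2$, not all of $F^2$); by (iii), $F^2_s\cap\Cee e_{h+1}=0$, so the projection of $F^2\cap(e_{h+1})^\perp$ to $\overline{\mathcal{H}}_\#$ is fiberwise injective near $D$. Both conditions are open, whence the rank-$(h+1)$ subbundle after shrinking $S$; no separate generic-point analysis over $S^*$ is needed. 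Your isotropy argument and your use of hypothesis (iv) for the $C^\infty$ splitting $F^2_{\#,s}\oplus\overline{F}^2_{\#,s}=H_\#$ (the standard $F^2\oplus\overline{F}^2=H$ for a weight-three Hodge structure) are correct and match the paper, though the parenthetical claim that no $(3,0)$ or $(0,3)$ part survives in $W_3/W_2$ again contradicts (iv), which stipulates $h^{3,0}=h^{0,3}=1$ there; fortunately that claim is not needed for the splitting.
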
 
\begin{proof} By (ii) of Lemma~\ref{Fprops}, for all $s\in D$, $\langle e_{h+1}, F^2_s\rangle\neq 0$. Thus, possibly after shrinking $S$, we can assume that $F^2\cap (e_{h+1})^\perp$ is a holomorphic subbundle of $\overline{\mathcal{H}}$ of rank $h+1$. By (iii) of Lemma~\ref{Fprops}, for all $s\in D$, $F^2_s \cap \Cee\cdot e_{h+1}=0$. Thus, again possibly after shrinking $S$, we can assume that the projection $F^2\cap (e_{h+1})^\perp \to \overline{\mathcal{H}}_\#$ is injective and of maximal rank at every point of $S$. It follows that $F^2_\#$ is a holomorphic  subbundle of $\overline{\mathcal{H}}_\#$ of rank $h+1$, and it is isotropic (i.e.\ $\langle F^2_\#, F^2_\#\rangle =0$) because $F^2$ is isotropic. 

Finally, for $s\in D$, $(F^2_\#)_s \oplus (\overline{F}^2_\#)_s\cong H_\#$ because, for each $s\in S$, $H_\#$ carries a weight 3 Hodge structure for which $(F^2_\#)_s$ is the corresponding piece of the Hodge filtration. After shrinking $S$, we can assume that, for all $s\in S$, $(F^2_\#)_s \oplus (\overline{F}^2_\#)_s\cong H_\#$. Hence $F^2_\# \oplus \overline{F}^2_\#\cong H_\#\otimes_\Cee C^\infty_S$.
\end{proof}

\subsection{Normalizing the holomorphic form} Begin by choosing an arbitrary holomorphic, nowhere vanishing section $\tilde{\omega}$ of the line bundle $F^3$. We can write (using the basis of flat sections $e_0, \dots, e_{h+1}, f_0, \dots, f_{h+1}$ of $\varphi^*\mathcal{H}$)
$$\varphi^*\tilde{\omega}(t,z) =  \sum_{i=0}^{h+1}\widetilde{A}_ie_i + \sum_{i=0}^{h+1}\widetilde{B}_if_i,$$
where the $\widetilde{A}_i$, $\widetilde{B}_i$ are holomorphic in $t_1, \dots, t_h, z$. The invariance property, that $\tilde{\omega}$ defines a holomorphic section of $\overline{\mathcal{H}}$, gives: $\widetilde{A}_0, \dots, \widetilde{A}_h$ and $\widetilde{B}_0, \dots, \widetilde{B}_{h+1}$ are holomorphic functions of $t$ and $q$ on $S$, and
$$\widetilde{A}_{h+1}= C(t,q) + z\widetilde{B}_{h+1}(t,q),$$
where $C(t,q)$ is a holomorphic function of $t$ and $q$ on $S$. Equivalently, viewed as a holomorphic section of the bundle $\overline{\mathcal{H}} \cong H\otimes_\Cee \scrO_S$,
$$\tilde{\omega}(t,z) =  \sum_{i=0}^h\widetilde{A}_ie_i + Ce_{h+1} +\sum_{i=0}^{h+1}\widetilde{B}_i f_i.$$

In the limit (i.e.\ for $s\in D$), $F^3_s \subseteq  W_3$ and hence $\widetilde{B}_{h+1}(t,0) = 0$. Nonetheless:

\begin{lemma} The coefficient $\widetilde{B}_{h+1}$ is not identically $0$.
\end{lemma}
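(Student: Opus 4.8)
The plan is to argue by contradiction, so suppose $\widetilde{B}_{h+1}\equiv 0$ and let us see that this is incompatible with the structure of the limiting mixed Hodge structure along $D$. The first point is bookkeeping: when $\tilde{\omega}$ is regarded as a holomorphic section of $\overline{\mathcal{H}}\cong H\otimes_\Cee\scrO_S$, its $f_{h+1}$-component is exactly $\widetilde{B}_{h+1}$, as in the displayed formula $\tilde{\omega}=\sum_{i=0}^{h}\widetilde{A}_ie_i+Ce_{h+1}+\sum_{i=0}^{h+1}\widetilde{B}_if_i$ above. Hence the assumption says precisely that $\tilde{\omega}$ takes values in the fixed subspace $W_3\subseteq H$. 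The strategy is then to promote this from the line bundle $F^3=\scrO_S\cdot\tilde{\omega}$ to the whole subbundle $F^2$, and to contradict part (iii) of Lemma~\ref{Fprops}.

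To carry out the promotion, note that under the assumption $N\tilde{\omega}=\widetilde{B}_{h+1}e_{h+1}=0$, so the connection form $-\tfrac{N}{2\pi\sqrt{-1}}\tfrac{dq}{q}$ annihilates $\tilde{\omega}$ and $\nabla$ acts on $\tilde{\omega}$ as the ordinary differential of an $H$-valued holomorphic function on $S$. Consequently $\nabla_{\partial/\partial t_1}\tilde{\omega},\dots,\nabla_{\partial/\partial t_h}\tilde{\omega}$ and $\nabla_{\partial/\partial q}\tilde{\omega}$ are genuine holomorphic partial derivatives of a function valued in the fixed linear subspace $W_3$, hence are themselves $W_3$-valued on all of $S$. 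By property (ii) of the filtration $F^\bullet$, over $S^*$ these $h+1$ sections together with $\tilde{\omega}$ span $F^2$; therefore $F^2_s\subseteq W_3$ for $s\in S^*$, and since $\{s:F^2_s\subseteq W_3\}$ is the vanishing locus of the holomorphic bundle map $F^2\hookrightarrow H\otimes_\Cee\scrO_S\to(H/W_3)\otimes_\Cee\scrO_S$, hence closed, and contains the dense set $S^*$, we conclude $F^2_s\subseteq W_3$ for every $s\in S$, in particular for $s\in D$.

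Finally, by the first Hodge-Riemann relation (property (iii) of $F^\bullet$), $F^2$ is a maximal isotropic subbundle, so $(F^2_s)^\perp=F^2_s$ for all $s$; and a direct computation with the symplectic basis gives $W_3^\perp=\Cee\cdot e_{h+1}=W_2$. Taking perpendiculars in $F^2_s\subseteq W_3$ yields $W_2=W_3^\perp\subseteq(F^2_s)^\perp=F^2_s$ for every $s$, so for $s\in D$ we would have $F^2_s\cap W_2=W_2\neq 0$, contradicting part (iii) of Lemma~\ref{Fprops}. This gives the claim. The argument is short; the only spot needing care — and the one I would flag as its real content — is the step asserting that $\nabla_{\partial/\partial q}\tilde{\omega}$ is holomorphic and $W_3$-valued across $D$ in spite of the pole of $\nabla$ along $D$, which works only because $N\tilde{\omega}=0$ under the hypothesis being refuted (and note that one must use $\nabla_{\partial/\partial q}\tilde{\omega}$ rather than $\nabla_{q\,\partial/\partial q}\tilde{\omega}=\tfrac{1}{2\pi\sqrt{-1}}\nabla_{\partial/\partial z}\tilde{\omega}$, which vanishes identically on $D$ and so would carry no information there).
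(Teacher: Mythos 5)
Your argument is correct and is essentially the paper's proof: assuming $\widetilde{B}_{h+1}\equiv 0$ forces $\tilde{\omega}$, hence its covariant derivatives, hence all of $F^2$, into the flat subbundle $W_3=(e_{h+1})^\perp$, contradicting Lemma~\ref{Fprops}. The only cosmetic differences are that you justify the derivative step via $N\tilde{\omega}=0$ rather than via flatness of $(e_{h+1})^\perp$, and you derive the contradiction from part (iii) of Lemma~\ref{Fprops} using $W_3^\perp=W_2$ and maximal isotropy of $F^2$, whereas the paper contradicts part (ii) ($F^2_s+W_3=W_4$) directly.
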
 
\begin{proof} Suppose instead that $\widetilde{B}_{h+1}$ is identically $0$, so that $\tilde{\omega}$ lies in the (flat) subbundle $(e_{h+1})^\perp$ of $\overline{\mathcal{H}}$. Then, over $S^*$, the sections
$$\tilde\omega, \frac{\partial \tilde\omega}{\partial t_1}, \dots , \frac{\partial \tilde\omega}{\partial t_h}, \frac{\partial \tilde\omega}{\partial q}$$ 
all lie in $(e_{h+1})^\perp$. It follows that $F^2|S^*$ lies in $(e_{h+1})^\perp$, and hence so does $F^2$. But this contradicts (ii) of Lemma~\ref{Fprops}. 
\end{proof}

We define the normalized meromorphic section $\omega$ of $F^3$ by dividing  $\tilde{\omega}$ by $\widetilde{B}_{h+1}$. Thus $\omega = (\widetilde{B}_{h+1})^{-1}\tilde{\omega}$ and
$$\varphi^*\omega = \sum_{i=0}^hA_ie_i + (A' + z)e_{h+1} + \sum_{i=0}^hB_if_i + f_{h+1},$$
where $A_i$, $B_i$, and $A'$ are meromorphic functions of $t$ and $q$ on $S$. We write this as 
$$\varphi^*\omega =\psi + ze_{h+1}.$$

Henceforth we shall ignore the $\varphi^*$ and view $\omega$ and its derivatives as meromorphic functions either on $\widetilde{S^*}$ or on $S^*$. Note that 
$$\frac{\partial \omega}{\partial t_i} = (\widetilde{B}_{h+1})^{-1}\frac{\partial \tilde{\omega}}{\partial t_i} + \frac{\partial}{\partial t_i}(\widetilde{B}_{h+1})^{-1}\cdot\tilde{\omega},$$
and similarly for the partial derivatives with respect to $q$ or $z$.  Thus, over the nonempty open subset of $S^*$ where $\widetilde{B}_{h+1} \neq 0$, the span of $\omega$ and its derivatives with respect to $t_1, \dots, t_h, q$, or equivalently with respect to $t_1, \dots, t_h, z$, is the holomorphic subbundle $F^2$. 

\subsection{The main calculation} First, a preliminary definition:

\begin{definition} A \textsl{real meromorphic function} on $S$ is an element of the field of quotients $\mathcal{K}(S)$ of the ring of (complex valued) real analytic functions on $S$ (which is an integral domain). A nonzero real meromorphic function on $S$  is defined and real analytic on an open dense subset of $S$. Real meromorphic functions on $S^*$ are defined similarly, and we let $\mathcal{K}(S^*)$ be the field of all such. In particular $\mathcal{K}(S)$ is a subfield of $\mathcal{K}(S^*)$.
\end{definition}

The function $\log |q|$ is real analytic on $S^*$, hence is an element of $\mathcal{K}(S^*)$. However:

\begin{lemma}\label{notrealmero} The function $\log |q|$ is not a real meromorphic function on $S$.
\end{lemma}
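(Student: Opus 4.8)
The statement to prove is that $\log|q|$ is not a real meromorphic function on $S = \Delta^h\times\Delta$, i.e.\ not in the field $\mathcal{K}(S)$ of quotients of real analytic functions on $S$. The plan is to argue by contradiction: suppose $\log|q| = f/g$ with $f,g$ real analytic on $S$ and $g\not\equiv 0$, so that $g(t,q)\log|q| = f(t,q)$ identically on the open dense set where $g\neq 0$, hence on all of $S$ by continuity (both sides extend continuously, since $g\log|q|\to 0$ as $q\to 0$ wherever $g$ is bounded). Restricting to a generic fixed value $t = t_0$ at which $g(t_0,\cdot)\not\equiv 0$, we reduce to the one-variable claim: there are no real analytic functions $f(q), g(q)$ on $\Delta$ with $g\not\equiv 0$ and $g(q)\log|q| = f(q)$ on $\Delta$.

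First I would handle the one-variable reduction carefully: the set of $t_0$ for which $g(t_0,\cdot)\equiv 0$ is the zero set of a real analytic function on $\Delta^h$ (a suitable coefficient in the power series expansion in the $q$-variable), hence either all of $\Delta^h$ — impossible since $g\not\equiv 0$ — or a proper real analytic subvariety; so a generic $t_0$ works, and at such $t_0$ the function $f(t_0,\cdot)$ is also not identically zero. Then for the one-variable statement, the cleanest route is to use the order of vanishing at $q=0$ along the positive real axis, or better, to exploit analyticity: write $q = x + \sqrt{-1}y$ and consider the restriction to the real segment $q = x \in (-\delta,\delta)$, where $g(x)\log|x| = f(x)$. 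Since $f$ and $g$ are real analytic in $x$ near $0$, if $g(x) = x^k h(x)$ with $h(0)\neq 0$, then $f(x) = x^k h(x)\log|x|$; but the right-hand side is not real analytic at $0$ (its $(k+1)$-st derivative, or the presence of the $\log$ factor, prevents a convergent power series expansion), whereas $f$ is — contradiction. One can make the "not real analytic" step rigorous by noting that $x^k h(x)\log|x|$ is $C^{k}$ but not $C^{k+1}$ at $x=0$, or by observing that any real analytic function vanishing to infinite order would be zero, and tracking the precise asymptotics $x^k\log|x|$ against a genuine power series.

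The main obstacle I expect is the bookkeeping in the one-variable step — ensuring that one extracts a genuine contradiction from "$f$ real analytic" versus "$g\log|q|$ with $g$ real analytic and not identically zero," since $g$ could itself vanish to high order at $q=0$ and one must divide out this common factor before the $\log$ singularity becomes visible. A slick alternative that avoids case analysis: observe that $q\,\partial/\partial q$ and $\bar q\,\partial/\partial\bar q$ both kill nothing of interest but that the real analytic function ring on $S$ (or $\Delta$) is preserved under $\partial/\partial q$ and $\partial/\partial\bar q$, and under multiplication; applying $q\partial_q - \bar q\partial_{\bar q}$ (or simply $\partial_q\partial_{\bar q}$, noting $\log|q|^2$ is harmonic away from $0$) to a putative identity $g\log|q| = f$ and iterating produces relations forcing $g$ to vanish to infinite order at the origin, hence $g\equiv 0$. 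Either way, the real content is local analyticity at $q = 0$, and once the reduction to one variable is in hand the contradiction is elementary; this lemma is then used to separate the "interesting" monodromy contribution $z = \log q/(2\pi\sqrt{-1})$, whose real part involves $\log|q|$, from the ambient real meromorphic functions on $S$ in the subsequent variational argument.
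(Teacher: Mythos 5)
Your proposal is correct and follows essentially the same route as the paper: reduce to one real variable along a line through $q=0$, write $g = x^k h$ with $h(0)\neq 0$, and derive a contradiction from the fact that $x^k\log|x|$ is not $C^\infty$ (hence not real analytic) at the origin. The only cosmetic difference is that you restrict to the real axis $q=x$ while the paper restricts to a generic ray $q=re^{\sqrt{-1}\theta}$ (choosing $\theta$ so that the restriction of $g$ is not identically zero), a point you should also account for.
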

\begin{proof} Write $q= re^{\sqrt{-1}\cdot \theta}$. Suppose that $\log|q|=\log r$ is of the form $F/G$ where $F$ and $G$ are real analytic functions on $S$ and $G\neq 0$. Choose values of $\theta$ and $t$ so that $G(t, re^{\sqrt{-1}\cdot \theta})$ is not identically zero and is convergent at $r=0$ as a power series in $r$. Then, for $0< r \ll 1$,  $\log r=f(r)/g(r)$, where $f(r)$, $g(r)$ are convergent power series in $r$ (at $r=0$) and $g(r)$ is not identically $0$. Thus $g(r) = r^ag_0(r)$ for some nonnegative integer $a$, where $g_0(0) \neq 0$. Hence there exists  a nonnegative  integer $a$ such that $r^a\log r$ extends to a $C^\infty$ function in some interval around $r=0$. This is a contradiction, since the $a^{\text{th}}$ derivative of $r^a\log r$ is unbounded at $0$.
\end{proof}

Our goal now is to prove:

\begin{theorem}\label{maincalc} With $\omega$ the normalized meromorphic section of $F^3$ given above, there exist real meromorphic functions $M_1$ and $M_2$ on $S$ with $M_1\neq 0$, such that, as an element of $\bigwedge^{2h+4}H\otimes_\Cee\mathcal{K}(S^*)$,
\begin{gather*}
\omega\wedge \bar\omega \wedge \frac{\partial \omega}{\partial z} \wedge \overline{\frac{\partial \omega}{\partial z}} \wedge  \frac{\partial \omega}{\partial t_1} \wedge \overline{\frac{\partial \omega}{\partial t_1}} \wedge  \cdots \wedge\frac{\partial \omega}{\partial t_h} \wedge \overline{\frac{\partial \omega}{\partial t_h}} =\\
=(  (z-\bar{z}) M_1  + M_2)(e_0\wedge \cdots \wedge e_{h+1}\wedge f_0\wedge \cdots \wedge f_{h+1}).
\end{gather*}
\end{theorem}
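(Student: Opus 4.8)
The plan is to expand the $(2h+4)$-fold wedge product by substituting the explicit expression $\omega = \psi + z e_{h+1}$, where $\psi$ is a meromorphic $H$-valued function on $S^*$ whose $e_{h+1}$-component involves no $z$, and $\bar\omega = \bar\psi + \bar z\, e_{h+1}$. First I would record that $\partial\omega/\partial z = \partial\psi/\partial z + e_{h+1}$ and $\partial\omega/\partial t_i = \partial\psi/\partial t_i$, since $z$ does not depend on $t_i$ and $e_{h+1}$ is flat; similarly for the conjugates. Thus every one of the $2h+4$ vectors in the wedge is of the form (something holomorphic or antiholomorphic in the $\psi$-data) plus a rational multiple of $z$, $\bar z$, or $1$ times $e_{h+1}$. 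Since $e_{h+1}\wedge e_{h+1}=0$, when we multiply out, each surviving monomial uses the "$e_{h+1}$ part" of \emph{at most one} factor. Collecting terms, the whole product becomes a sum: the term with no $e_{h+1}$ contribution plus, for each of the $2h+4$ slots, a term where that slot is replaced by its $e_{h+1}$-coefficient times $e_{h+1}$ wedged with the remaining $2h+3$ factors.

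Next I would exploit the first Hodge--Riemann relation, property (iii): $F^2$ is isotropic, so $\langle\omega,\omega\rangle = \langle\omega, \partial\omega/\partial z\rangle = \langle\omega,\partial\omega/\partial t_i\rangle = 0$, and more generally all pairings among $\omega$ and its first derivatives vanish (they all lie in $F^2$). Taking $\partial/\partial t_j$ and $\partial/\partial z$ of these relations shows the second derivatives pair trivially with $\omega$ too, which will be what forces the coefficient structure. The key observation is that the only place $z$ (as opposed to $q$, hence as opposed to a single-valued function on $S$) enters is through the explicit $z e_{h+1}$ in $\omega$ and $\bar z\, e_{h+1}$ in $\bar\omega$. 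So among the $2h+4$ replacement terms above, the only ones producing a genuine $z$ or $\bar z$ come from replacing the $\omega$-slot (coefficient $z$ times $e_{h+1}$) or the $\bar\omega$-slot (coefficient $\bar z$). Every other slot's $e_{h+1}$-coefficient is a real meromorphic function on $S$ (built from $\psi$, $\bar\psi$ and their derivatives, which are functions of $t$ and $q$). I would then argue that when the $\omega$-slot's $z e_{h+1}$ is wedged with the remaining $2h+3$ factors, the $\bar\omega$ among those remaining factors still contributes its $\bar z e_{h+1}$, but that now meets the $e_{h+1}$ we already pulled out, giving zero — hence only the $\bar\psi$-part of $\bar\omega$ survives there, leaving a clean $z$ times a real meromorphic coefficient; symmetrically for the $\bar\omega$-slot giving $\bar z$. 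The remaining slots give $z$-free, $\bar z$-free contributions.

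Thus the product collapses to $z\cdot M_1 + \bar z \cdot M_2 + N_0$ times the top form $e_0\wedge\cdots\wedge f_{h+1}$, where $M_1, M_2, N_0 \in \mathcal{K}(S)$. To get the stated form $((z-\bar z)M + N)$ I would check that $M_1 = -M_2$; this should follow from the reality/conjugation symmetry of the whole expression. Indeed, the wedge product is built symmetrically in $\omega\wedge\bar\omega$ and in each pair $\frac{\partial\omega}{\partial t_i}\wedge\overline{\frac{\partial\omega}{\partial t_i}}$; applying complex conjugation to the ambient space (which is defined over $\Ar$, so fixes $e_0\wedge\cdots\wedge f_{h+1}$) and reordering the factors multiplies the product by $(-1)^{h+2}$ and swaps the roles of $\omega$ and $\bar\omega$, i.e. sends $z\mapsto \bar z$; comparing the two expressions forces $\overline{M_1} = \pm M_2$ and, combined with the fact that the "$N$ part" collects into a real meromorphic function, yields $z M_1 + \bar z M_2 = (z-\bar z)M + (\text{real meromorphic})$ after absorbing the symmetric part of $M_1, M_2$ into $N$. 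Finally I would need $M\neq 0$; this is where I expect the main obstacle. Vanishing of $M$ would mean the coefficient of $z$ (equivalently of $\log q$, up to a nonzero constant and a single-valued function) is zero, so the product would be a real meromorphic function on $S$ — but one then argues this contradicts the nontriviality of the monodromy $N$ (Theorem on $\ker N, \operatorname{im} N$) combined with Lemma~\ref{notrealmero}: if $M\equiv 0$ the period integral data would extend single-valuedly across $D$ in a way incompatible with $N^2=0$, $N\neq 0$. Making this last contradiction precise — identifying $M$ with an explicit pairing like $\langle \tfrac{\partial\omega}{\partial z}, e_{h+1}\rangle$ times lower-order data and showing it is not killed by the isotropy relations — is the delicate point; the rest is bookkeeping with the expansion and the relations $\langle F^2, F^2\rangle = 0$.
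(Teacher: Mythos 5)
Your expansion of the wedge product is essentially the paper's: writing $\omega=\psi+z e_{h+1}$, observing that $\partial\omega/\partial z$ and the $\partial\omega/\partial t_i$ are meromorphic in $(t,q)$, and noting that $e_{h+1}\wedge e_{h+1}=0$ kills the $z\bar z$ term, so the product has the shape $zM_1+\bar z M_2+N_0$ with $M_1,M_2,N_0\in\mathcal{K}(S)$. Two points, however. First, your route to $M_1=-M_2$ does not work as stated: conjugation symmetry only gives $M_2=(-1)^{h+2}\overline{M_1}$, and you cannot ``absorb the symmetric part into $N$'' because $z+\bar z=\arg q/\pi$ is neither single-valued on $S^*$ nor in $\mathcal{K}(S)$. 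The correct reason is more direct: the normalization forces the $f_{h+1}$-coefficient of $\psi$ (hence of $\bar\psi$) to be exactly $1$, so the $z$- and $\bar z$-terms are $-z\,\bar\psi\wedge e_{h+1}\wedge(\cdots)$ and $+\bar z\,\psi\wedge e_{h+1}\wedge(\cdots)$, whose $f_{h+1}$-components produce literally the same $(2h+2)$-form with opposite signs, giving the factor $(z-\bar z)$ multiplying a single $\Psi\wedge e_{h+1}\wedge f_{h+1}$. (Alternatively, descent of the product from $\widetilde{S^*}$ to $S^*$ under $z\mapsto z+1$ forces $M_1+M_2=0$.)

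The genuine gap is $M\neq 0$, which you flag but attack with the wrong tool. Nonvanishing of $M$ has nothing to do with the nontriviality of the monodromy or with Lemma~\ref{notrealmero}; those are used only \emph{afterwards} (in Corollary~\ref{maincor}) to conclude that $(z-\bar z)M+N\not\equiv 0$ once $M\neq 0$ is known. In the paper, $M$ is identified, modulo $e_{h+1}$, with the coefficient of $e_0\wedge f_0\wedge\cdots\wedge e_h\wedge f_h$ in $\frac{\partial\psi}{\partial z}\wedge\overline{\frac{\partial\psi}{\partial z}}\wedge\frac{\partial\omega}{\partial t_1}\wedge\cdots\wedge\overline{\frac{\partial\omega}{\partial t_h}}$, and its nonvanishing is a Hodge-theoretic input: the sections $\frac{\partial\omega}{\partial z},\frac{\partial\omega}{\partial t_1},\dots,\frac{\partial\omega}{\partial t_h}$ are linearly independent over the meromorphic function field (they span $F^2$ together with $\omega$, by property (ii)), they lie in $F^2\cap(e_{h+1})^\perp$ and inject into the quotient bundle $F^2_{\#}\subseteq\overline{\mathcal{H}}_{\#}$, and --- this is the crux, Lemma~\ref{Fsharp} --- $F^2_{\#}\oplus\overline{F}{}^2_{\#}\cong H_{\#}\otimes_{\Cee}C^\infty_S$ because the graded piece $W_3/W_2$ of the limiting mixed Hodge structure carries a \emph{pure} weight-three Hodge structure (property (iv), i.e.\ the $\partial\bar\partial$-lemma for $X_0$). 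This gives $M=\pm|\det G|^2A$ with $A$ a nowhere-zero real analytic function and $\det G\neq 0$. Your candidate $\langle\partial\omega/\partial z,e_{h+1}\rangle$ is in fact identically zero (the $f_{h+1}$-coefficient of $\partial\omega/\partial z$ vanishes), and the isotropy relations $\langle F^2,F^2\rangle=0$ are used only to control the form of the answer, not to produce the nonvanishing. Without invoking the purity of $W_3/W_2$ at the boundary, the argument does not close.
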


First, we show how to apply Theorem~\ref{maincalc} toward establishing  the $\partial\overline{\partial}$-lemma: 

\begin{corollary}\label{maincor} There exists a nonempty open dense subset of $S^*$, the complement of a proper real analytic subvariety in $S^*$,  such that, for all $s\in S^*$,
$$F^2_s \oplus \overline{F}^2_s \cong H.$$ 
\end{corollary}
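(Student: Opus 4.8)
\textbf{Proof plan for Corollary~\ref{maincor}.}
The plan is to deduce the corollary from Theorem~\ref{maincalc} by a dimension count combined with the fact, established in Lemma~\ref{notrealmero}, that $\log|q|$ is not a real meromorphic function on $S$. First I would observe that the wedge product displayed in Theorem~\ref{maincalc} is, up to a nowhere-vanishing holomorphic factor (the $(\widetilde{B}_{h+1})^{-1}$ and its conjugate appearing in the passage from $\tilde\omega$ to $\omega$, plus the fact that $\partial/\partial z$ and $\partial/\partial t_i$ applied to $\omega$ differ from the corresponding derivatives of $\tilde\omega$ by multiples of $\omega$ which wedge to zero), a nonzero multiple of the wedge of a basis of $F^2|S^*$ with a basis of $\overline{F}^2|S^*$; here one uses property (ii) of the filtration, which says $\tilde\omega$ and its $h+1$ first derivatives span $F^2$ over $S^*$. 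Since $F^2$ has rank $h+2$ by Lemma~\ref{Fprops}(i), and $\dim H = 2h+4 = 2(h+2)$, the natural map $F^2_s\oplus\overline{F}^2_s\to H$ is an isomorphism if and only if this top wedge product is nonzero at $s$.

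Next I would combine this with Theorem~\ref{maincalc}: the obstruction to the isomorphism $F^2_s\oplus\overline{F}^2_s\cong H$ is exactly the vanishing locus of the scalar function $(z-\bar z)M + N$ on $S^*$ (after multiplying by the nonvanishing holomorphic correction factor, which does not affect the vanishing locus). Now $z - \bar z = 2\sqrt{-1}\,\mathrm{Im}\,z$, and from $q = e^{2\pi\sqrt{-1}z}$ one has $\mathrm{Im}\,z = -\frac{1}{2\pi}\log|q|$, so $(z-\bar z)M + N = -\frac{\sqrt{-1}}{\pi}(\log|q|)\,M + N$. Suppose for contradiction that this function vanished identically on $S^*$. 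Since $M\neq 0$ as an element of $\mathcal{K}(S)$, and $\mathcal{K}(S)\subseteq\mathcal{K}(S^*)$, we could divide to conclude $\log|q| = \frac{\pi}{\sqrt{-1}}N/M$ as elements of $\mathcal{K}(S^*)$, with $N/M\in\mathcal{K}(S)$. But this exhibits $\log|q|$ as a real meromorphic function on $S$, contradicting Lemma~\ref{notrealmero}. Hence $(z-\bar z)M+N$ is a nonzero element of $\mathcal{K}(S^*)$, so it is defined and nonzero on an open dense subset of $S^*$, namely the complement of the proper real analytic subvariety where $M$, $N$ are defined and $(z-\bar z)M+N$ vanishes or is undefined.

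On that open dense set the top wedge product is nonzero, hence $F^2_s\oplus\overline{F}^2_s\cong H$ there, which is the assertion. The one point requiring a little care — and the main thing to get right rather than a genuine obstacle — is the bookkeeping of the holomorphic correction factors relating the wedge of derivatives of $\omega$ to the honest wedge of a frame of $F^2$ with a frame of $\overline{F}^2$: one must check that the discrepancy is a product of a nonvanishing holomorphic function and its conjugate (coming from $\widetilde{B}_{h+1}$) together with a determinant of a triangular change of frame whose diagonal entries are the nonvanishing factors, so that it neither introduces spurious zeros nor cancels the $\log|q|$ term. Once that is in place the corollary is immediate. I would also note explicitly that ``proper real analytic subvariety'' is exactly the right language: the zero set of a nonzero real meromorphic function, together with the locus where its defining numerator and denominator fail to be real analytic, is such a subvariety by the definition given before Lemma~\ref{notrealmero}.
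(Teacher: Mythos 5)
Your proposal is correct and follows essentially the same route as the paper: interpret the nonvanishing of the top wedge product in Theorem~\ref{maincalc} as the spanning condition $F^2_s\oplus\overline{F}^2_s\cong H$, then rule out identical vanishing of $(z-\bar z)M+N$ by observing that it would force $\log|q|=-\pi\sqrt{-1}\,N/M\in\mathcal{K}(S)$, contradicting Lemma~\ref{notrealmero}. The paper's proof is just a terser version of the same argument, leaving implicit the bookkeeping of the $\widetilde{B}_{h+1}^{-1}$ factors that you spell out.
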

\begin{proof} By Theorem~\ref{maincalc}, if $F^2_s $ and $\overline{F}^2_s$ do not span $H$ on an open subset where $M_1$, $M_2$, and $\widetilde{B}_{h+1}^{-1}$ are defined, then $(z-\bar{z}) M_1  + M_2$ is identically $0$. We have 
$$z-\bar{z} = \frac{1}{\pi\sqrt{-1}}\log r = \frac{1}{\pi\sqrt{-1}}\log |q|.$$
 Thus $\log |q| = -\pi\sqrt{-1}M_2/M_1$ is a real meromorphic function on $S$, contradicting Lemma~\ref{notrealmero}.  
  
 Hence $(z-\bar{z}) M_1  + M_2$ is not identically $0$ on $S^*$. Let $U$ be the nonempty open dense subset  of $S^*$ where  $M_1$, $M_2$, and $\widetilde{B}_{h+1}^{-1}$ are defined, and for which   $(z-\bar{z}) M_1  + M_2$ does not vanish. Then $U$ is the complement of a proper real analytic subvariety in $S^*$.   For $s\in U$, we have $F^2_s \oplus \overline{F}^2_s \cong H$ as claimed.
\end{proof}

Combining Corollary~\ref{maincor} and Corollary~\ref{prelimcor}, we obtain:

\begin{corollary}\label{firstreduction} Let $\pi\colon \mathcal{Y}\to S$ be as in \S2.3, \S2.5. There exists a nonempty open dense subset of $S^*$, the complement of a proper real analytic subvariety in $S^*$,  such that, for all $s\in S^*$, the fiber $Y_s$ satisfies the $\partial\overline{\partial}$-lemma. \qed  
\end{corollary}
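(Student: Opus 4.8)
The plan is to prove Theorem~\ref{maincalc} by a direct wedge-product computation in $\bigwedge^{2h+4}H\otimes_\Cee\mathcal{K}(S^*)$, exploiting the explicit form $\varphi^*\omega = \psi + ze_{h+1}$ of the normalized section, where $\psi = \sum_{i=0}^h A_ie_i + A'e_{h+1} + \sum_{i=0}^h B_if_i + f_{h+1}$ has meromorphic coefficients in $(t,q)$ and $z = (\log q)/(2\pi\sqrt{-1})$ is the only multivalued quantity. The first step is to record how the $z$-dependence propagates to the derivatives: since $\partial/\partial z$ acting on the $(t,q)$-meromorphic coefficients is a total derivative with respect to $q$ weighted by $q\partial/\partial q$, we get $\partial\omega/\partial z = \partial\psi/\partial z + e_{h+1}$ (the $z$-linear term in $e_{h+1}$ contributes the constant $e_{h+1}$), while $\partial\omega/\partial t_i = \partial\psi/\partial t_i + z\,(\partial/\partial t_i)$-corrections; crucially, each of $\omega$, $\partial\omega/\partial z$, $\partial\omega/\partial t_i$ is of the form (meromorphic in $t,q$) $+\,z\cdot$(meromorphic in $t,q$)$\cdot e_{h+1}$, and modulo $e_{h+1}$ the dependence on $z$ disappears entirely. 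Passing to conjugates introduces $\bar z$ in the analogous way, with $\bar z$ multiplying $e_{h+1}$ only.

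Next I would expand the $(2h+4)$-fold wedge multilinearly. Because every vector in the list is (something $z$- and $\bar z$-free) plus a multiple of $e_{h+1}$ or $\overline{e_{h+1}}=e_{h+1}$ (note $e_{h+1}$ is defined over $\Q$, hence real, so its conjugate is itself), the wedge is a polynomial in $z$ and $\bar z$. But $e_{h+1}\wedge e_{h+1}=0$, so at most one factor can contribute its $e_{h+1}$-component: the result is \emph{affine} in each of $z,\bar z$ and moreover no monomial $z\bar z$ survives. Collecting terms, the wedge equals $P + zQ + \bar z R$ for some $P,Q,R\in\bigwedge^{2h+4}H\otimes_\Cee\mathcal{K}(S)$ (genuinely single-valued meromorphic on $S$, since all coefficient functions are). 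Writing out the antisymmetry and using that the ambient space $\bigwedge^{2h+4}H$ is one-dimensional spanned by $e_0\wedge\cdots\wedge e_{h+1}\wedge f_0\wedge\cdots\wedge f_{h+1}$, we identify $P,Q,R$ with scalar real meromorphic functions on $S$. Reality of the whole expression under the conjugation that swaps the two halves of each conjugate pair forces $R = -Q$ up to sign bookkeeping (swapping a holomorphic factor with its conjugate partner reverses orientation an even number of times overall but negates the role of $z$ versus $\bar z$), so the wedge is $(z-\bar z)Q + P$; set $M = Q$ and $N = P$. The remaining content is showing $M\not\equiv 0$: one computes the coefficient of $z-\bar z$, which is the wedge of $\omega$ with $\partial\omega/\partial z$ \emph{replaced by} $e_{h+1}$ and the rest unchanged, i.e.\ $e_{h+1}\wedge(\text{the }z,\bar z\text{-free parts of the other }2h+3\text{ vectors})$, and this is nonzero precisely because $F^2_s$ is a maximal isotropic subbundle meeting $(e_{h+1})^\perp$ transversally to $e_{h+1}$ — equivalently by Lemma~\ref{Fsharp}, $F^2_\#\oplus\overline{F}^2_\#\cong H_\#$ on a dense open set, which says exactly that the $e_{h+1}$-slotted wedge is generically nonzero.

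The final statement, Corollary~\ref{firstreduction}, is then immediate: Corollary~\ref{maincor} gives a dense open $U\subseteq S^*$, the complement of a proper real analytic subvariety, on which $F^2_s\oplus\overline{F}^2_s\cong H$; combined with the structural facts from \S2 (Theorem~\ref{MHS1}, Theorem~\ref{MSH3}) that each smooth fiber $Y_s$ has $H^1=H^5=0$, has $H^2$ and $H^4$ of pure Hodge type $(1,1)$ and $(2,2)$ respectively, has $K_{Y_s}$ trivial with $H^0(\Omega^2)=H^2(\scrO)=0$, and (by Theorem~\ref{MHS2}(iii)) has degenerate Hodge--de Rham spectral sequence, Corollary~\ref{prelimcor} applies verbatim with $X = Y_s$, $F^\bullet$ the Hodge filtration on $H^3(Y_s)$, and the condition $H^3(Y_s)\cong F^2\oplus\overline{F}^2$ now verified for $s\in U$. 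Hence the $\partial\bar\partial$-lemma holds for $Y_s$ for every $s\in U$.

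The main obstacle I expect is the bookkeeping in the middle step: carefully tracking the $z$- and $\bar z$-linearity through the multilinear expansion, and in particular verifying that the coefficient $M$ of $z-\bar z$ is the ``$e_{h+1}$-substituted'' wedge and identifying its nonvanishing with the transversality statement of Lemma~\ref{Fsharp}. The reality/conjugation-symmetry argument that pins down $R=-Q$ also requires care about orientation signs in $\bigwedge^{2h+4}H$, though it is routine once set up. Everything else — that the coefficient functions are genuinely meromorphic on $S$ rather than merely on $S^*$ (this is where normalization by $\widetilde B_{h+1}$ and the canonical extension are used), and the invocation of Lemma~\ref{notrealmero} to conclude $(z-\bar z)M+N\not\equiv 0$ — is formal given the preceding lemmas.
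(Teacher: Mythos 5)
Your overall route is the same as the paper's: normalize $\omega$ so that $\varphi^*\omega=\psi+ze_{h+1}$ with $\langle\psi,e_{h+1}\rangle=1$, observe that $z$ and $\bar z$ enter the top wedge only through the $e_{h+1}$-components of $\omega$ and $\bar\omega$ (so the product is affine in $z,\bar z$ with no $z\bar z$ term), identify the coefficient of $z-\bar z$ with an $e_{h+1}$-slotted wedge whose generic nonvanishing is exactly Lemma~\ref{Fsharp}, and then conclude via Lemma~\ref{notrealmero}, Corollary~\ref{maincor}, Theorems~\ref{MHS2} and \ref{MSH3}, and Corollary~\ref{prelimcor}. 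The final deduction of Corollary~\ref{firstreduction} is correct and is precisely what the paper does.

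The one step whose justification does not work as written is the passage from $P+zQ+\bar zR$ to $(z-\bar z)M+N$. The conjugation that swaps each factor with its conjugate partner sends $P+zQ+\bar zR$ to $\bar P+\bar z\,\bar Q+z\bar R$ and multiplies the wedge by $(-1)^{h+2}$; matching coefficients therefore gives $R=(-1)^{h+2}\overline{Q}$, not $R=-Q$, and with only this information the $z,\bar z$-dependent part is $zQ\pm\bar z\overline{Q}$, which involves $\arg q$ as well as $\log|q|$ and so does not feed directly into Lemma~\ref{notrealmero}. The identity $R=-Q$ is true, but for a different reason: either by the paper's direct expansion (the $z$-term forces $f_{h+1}$ to come from $\bar\psi$ and the $\bar z$-term forces it to come from $\psi$, and the two resulting wedges differ exactly by the transposition of $e_{h+1}$ and $f_{h+1}$, hence by a sign), or, more structurally, because $\Xi\wedge\Xi'$ is a single-valued section of $\det\mathcal{H}|_{S^*}$ relative to the flat frame $e_0\wedge\cdots\wedge f_{h+1}$ (as $\det T=1$), so invariance under $z\mapsto z+1$ forces $Q+R=0$. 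Either repair is immediate with the tools already in place, so this is a flaw in the stated mechanism rather than in the strategy. Relatedly, in your description of $M$ the factor replaced by $e_{h+1}$ should be $\omega$ (whose $ze_{h+1}$-component carries the $z$), not $\partial\omega/\partial z$; with that correction the identification of $M\neq 0$ with Lemma~\ref{Fsharp} (together with the fact, from property (ii) of the filtration, that $\omega$ and its derivatives generically span $F^2$) is the paper's argument.
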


\begin{proof}[Proof of Theorem~\ref{maincalc}] Write $\omega = \psi + ze_{h+1}$, where $\psi$ is a meromorphic section of $\overline{\mathcal{H}}$, i.e.\  whose coordinates are meromorphic functions of $(t,q)$, and such that $\langle \psi, e_{h+1}\rangle = 1$, i.e.\ the coefficient of $f_{h+1}$ in $\psi$ is $1$. Thus
$$ \omega\wedge \bar{\omega} = \bar{z}(\psi \wedge e_{h+1}) - z (\bar{\psi}\wedge e_{h+1})  
+(\psi \wedge \bar{\psi}).$$
Then $\bar\omega = \bar{\psi} + \bar{z}e_{h+1}$. Taking derivatives, we have
$$\frac{\partial \omega}{\partial z} = \frac{\partial \psi}{\partial z}+ e_{h+1}.$$
Here, since $\displaystyle \frac{1}{2\pi \sqrt{-1}} \frac{\partial}{\partial z} = q\frac{\partial} {\partial q}$, $\displaystyle  \frac{\partial \psi}{\partial z}$ and $\displaystyle  \frac{\partial \omega}{\partial z}$ are meromorphic sections of $\overline{\mathcal{H}}$ (their coefficients are meromorphic functions of $(t,q)$), and the coefficient of $f_{h+1}$ in each is $0$. Similarly 
$$\overline{\frac{\partial \omega}{\partial z}}=  \overline{\frac{\partial \psi}{\partial z}}+ e_{h+1}.$$
Computing, we see that
\begin{gather*}
\Xi = \omega\wedge \bar{\omega} \wedge \frac{\partial \omega}{\partial z} \wedge \overline{\frac{\partial \omega}{\partial z}} =\\
= [\bar{z}(\psi \wedge e_{h+1}) - z (\bar{\psi}\wedge e_{h+1})+ (\psi \wedge \bar{\psi})] \wedge \left(\frac{\partial \psi}{\partial z} \wedge e_{h+1}  -  \overline{\frac{\partial \psi}{\partial z}}\wedge e_{h+1} + \frac{\partial \psi}{\partial z}\wedge\overline{\frac{\partial \psi}{\partial z}}\right)\\
=\bar{z}\left(\psi \wedge e_{h+1}\wedge\frac{\partial \psi}{\partial z}\wedge\overline{\frac{\partial \psi}{\partial z}}\right)  -z\left(\bar{\psi}\wedge e_{h+1}\wedge \frac{\partial \psi}{\partial z}\wedge\overline{\frac{\partial \psi}{\partial z}}\right)\\
+ \psi \wedge \bar{\psi}  \wedge \left(\frac{\partial \psi}{\partial z} \wedge e_{h+1}  -  \overline{\frac{\partial \psi}{\partial z}}\wedge e_{h+1} + \frac{\partial \psi}{\partial z}\wedge\overline{\frac{\partial \psi}{\partial z}}\right).
\end{gather*}
Setting 
$$\Phi = \psi \wedge \bar{\psi}  \wedge \left(\frac{\partial \psi}{\partial z} \wedge e_{h+1}  -  \overline{\frac{\partial \psi}{\partial z}}\wedge e_{h+1} + \frac{\partial \psi}{\partial z}\wedge\overline{\frac{\partial \psi}{\partial z}}\right),$$
we can write the expression above, as 

\begin{equation*}
\Xi = (z-\bar{z})\left(\frac{\partial \psi}{\partial z}\wedge\overline{\frac{\partial \psi}{\partial z}}\right)\wedge e_{h+1}\wedge f_{h+1}+   \Phi + \dots, \tag{$*$}
\end{equation*}
 where the remaining terms do not involve $f_{h+1}$ (but might involve $z$ or $\bar{z}$).  
 
Consider the wedge product  
$$\Psi = \frac{\partial \psi}{\partial z}\wedge\overline{\frac{\partial \psi}{\partial z}}\wedge\frac{\partial \omega}{\partial t_1} \wedge \overline{\frac{\partial \omega}{\partial t_1}} \wedge  \cdots \wedge\frac{\partial \omega}{\partial t_h} \wedge \overline{\frac{\partial \omega}{\partial t_h}}.$$
Note that none of the terms in the wedge product  involve $f_{h+1}$. In fact, we have the following:

\begin{lemma} There exists a nonzero real meromorphic function $M_1$ on $S$ such that  $\Psi$ is of the form
$$M_1e_0\wedge f_0\wedge \cdots \wedge e_h\wedge f_h + \Omega\wedge e_{h+1},$$
for some  $\Omega \in \mathcal{K}(S)\otimes _\Cee\bigwedge^{2h+1}H$. 
\end{lemma}
\begin{proof} It is enough to prove the corresponding statement for the form $\Psi'$ where, in the definition of $\Psi$, we replace $\displaystyle \frac{\partial \psi}{\partial z}\wedge\overline{\frac{\partial \psi}{\partial z}}$ with $\displaystyle \frac{\partial \omega}{\partial z}\wedge\overline{\frac{\partial \omega}{\partial z}}$. Clearly, we can view $\Psi$ or $\Psi'$ as an element of $\mathcal{K}(S)\otimes _\Cee\bigwedge^{2h+2}H$. Consider the meromorphic sections 
$$\frac{\partial \omega}{\partial z}, \frac{\partial \omega}{\partial t_1}, \dots, \frac{\partial \omega}{\partial t_h}$$
of $F^2\cap(e_{h+1})^\perp$. Over the field of meromorphic functions on $S^*$, the span of 
$$\omega,  \frac{\partial \omega}{\partial z}, \frac{\partial \omega}{\partial t_1}, \dots, \frac{\partial \omega}{\partial t_h}$$
is the same as the span of 
$$\tilde\omega, \frac{\partial \tilde\omega}{\partial t_1}, \dots , \frac{\partial \tilde\omega}{\partial t_h}, \frac{\partial \tilde\omega}{\partial q}.$$
Hence $\displaystyle  \frac{\partial \omega}{\partial z}, \frac{\partial \omega}{\partial t_1}, \dots, \frac{\partial \omega}{\partial t_h}$ are linearly independent over the field of meromorphic functions on $S^*$ and hence on $S$. Since $F^2\cap (e_{h+1})^\perp \to \overline{\mathcal{H}}_\#$ is injective and of maximal rank, the above sections remain linearly independent when viewed as meromorphic sections of $F^2_\#$.    

Let $\sigma_0, \dots, \sigma_h$ be a basis of holomorphic sections for the holomorphic bundle $F^2_\#$. Then, by Lemma~\ref{Fsharp},  there exists a nonzero real analytic function $A$ such that
$$\sigma_0\wedge \bar{\sigma}_0\wedge \cdots \wedge \sigma_h\wedge \bar{\sigma}_h = Ae_0\wedge f_0\wedge \cdots \wedge e_h\wedge f_h.$$
There exists an $(h+1)\times (h+1)$ matrix $G$ whose entries are meromorphic functions on $S$ expressing the images of the meromorphic sections $\displaystyle  \frac{\partial \omega}{\partial z}, \frac{\partial \omega}{\partial t_1}, \dots, \frac{\partial \omega}{\partial t_h}$ in $F^2_\#$ as linear combinations of  $\sigma_0, \dots, \sigma_h$. Furthermore,  $\det G \neq 0$, because $\displaystyle  \frac{\partial \omega}{\partial z}, \frac{\partial \omega}{\partial t_1}, \dots, \frac{\partial \omega}{\partial t_h}$ are linearly independent over the field of meromorphic functions on $S$.  Then, working in $H_\#$ and the associated $C^\infty$ bundle (i.e.\ mod $e_{h+1}$), 
\begin{align*}
\Psi \bmod e_{h+1} =\Psi' \bmod e_{h+1}  &=
 \pm |\det G|^2\sigma_0\wedge \bar{\sigma}_0\wedge \cdots \wedge \sigma_h\wedge \bar{\sigma}_h   \\
&= \pm |\det G|^2Ae_0\wedge f_0\wedge \cdots \wedge e_h\wedge f_h.
\end{align*}
This says that, for some nonzero real meromorphic function $M_1$,
$$
\Psi =  M_1e_0\wedge f_0\wedge \cdots \wedge e_h\wedge f_h 
$$
mod $e_{h+1}$, and thus completes the proof of the lemma. 
\end{proof}

To finish the proof of Theorem~\ref{maincalc}, our goal is to calculate 
$$\Xi \wedge \Xi' = \omega\wedge \bar\omega \wedge \frac{\partial \omega}{\partial z} \wedge \overline{\frac{\partial \omega}{\partial z}} \wedge  \frac{\partial \omega}{\partial t_1} \wedge \overline{\frac{\partial \omega}{\partial t_1}} \wedge  \cdots \wedge\frac{\partial \omega}{\partial t_h} \wedge \overline{\frac{\partial \omega}{\partial t_h}}$$
which is the wedge product of $\displaystyle \Xi= \omega\wedge \bar\omega \wedge \frac{\partial \omega}{\partial z} \wedge \overline{\frac{\partial \omega}{\partial z}}$ with 
$$\Xi'= \frac{\partial \omega}{\partial t_1} \wedge \overline{\frac{\partial \omega}{\partial t_1}} \wedge  \cdots \wedge\frac{\partial \omega}{\partial t_h} \wedge \overline{\frac{\partial \omega}{\partial t_h}}.$$
In our previous notation,
$$\Psi = \frac{\partial \psi}{\partial z} \wedge \overline{\frac{\partial \psi}{\partial z}} \wedge \Xi'.$$
Since $\Xi'$  does not involve $f_{h+1}$,  any terms of $\Xi$ which do not involve $f_{h+1}$ will drop out of $\Xi\wedge \Xi'$.
By the above lemma and Equation $(*)$, $\Xi\wedge \Xi'$ is of the form 
\begin{align*}
\Xi\wedge \Xi'&= (z-\bar{z})\Psi \wedge  e_{h+1}\wedge f_{h+1} + \Phi \wedge \Xi'\\
&=(z-\bar{z})M_1e_0\wedge f_0\wedge \cdots \wedge e_{h+1}\wedge f_{h+1} + \Phi \wedge \Xi',
\end{align*}
with $M_1\neq 0$.
Since  the coefficients of $\Phi$, $\Xi'$  are real meromorphic functions, $\Phi \wedge \Xi'\in   \bigwedge^{2h+4}H\otimes_\Cee\mathcal{K}(S)$  and we can write
$$\Phi \wedge \Xi' = M_2e_0\wedge f_0\wedge \cdots \wedge e_{h+1}\wedge f_{h+1}$$
for some real meromorphic function $M_2$. Thus $\Xi\wedge \Xi'$ is as claimed. 
\end{proof}

\subsection{The main theorem} We can now prove the main theorem of this paper:

\begin{theorem} 
Let $X$ be a compact complex manifold of dimension $3$ with $K_X\cong \scrO_X$ for which the $\partial\bar{\partial}$-lemma holds, and such that that $H^i(X; \scrO_{X}) = 0$ for $i=1,2$ and  $H^0(X; \Omega_{X}^j) = 0$ for $j=1,2$.  Suppose that  $C_1, \dots, C_r$ are disjoint smooth rational curves in $X$ such that  $N_{C_i/X} \cong \scrO_{\Pee^1}(-1) \oplus \scrO_{\Pee^1}(-1)$. Assume that the classes  $[C_i]$ of the $C_i$ satisfy a linear relation in $H^4(X; \Cee)$ of the form
$$\sum_{i=1}^rm_i [C_i] =0,$$
where $m_i \neq 0$ for every $i$. Let $\overline{X}$ be the singular compact threefold obtained by contracting the $C_i$. Then there exist smoothings of $\overline{X}$ for which the $\partial\bar{\partial}$-lemma holds.
\end{theorem}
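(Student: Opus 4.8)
The plan is to run an induction on the number $r$ of curves, reducing to the situation analyzed in Sections 2 and 3, along the lines of Remark~\ref{inductiveremark}.

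The base case of the induction, and more generally the case in which the relation $\sum_i m_i[C_i]=0$ is \emph{minimal} in the sense that the $[C_i]$ span a subspace of $H^4(X;\Cee)$ of dimension exactly $r-1$ (this includes $r=1$, where the hypothesis is simply $[C_1]=0$), is exactly the setting of Sections 2 and 3: the pair $(X,C_1,\dots,C_r)$ satisfies all the hypotheses imposed there on $(X_0,C_1,\dots,C_r)$, so Corollary~\ref{firstreduction} — together with the Tian--Kawamata--Ran smoothing theorem recalled in Section 2, which guarantees that the relevant smooth fibers are genuinely smoothings of $\overline{X}$ with trivial canonical bundle — produces a smoothing of $\overline{X}$ for which the $\partial\bar{\partial}$-lemma holds.

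For the inductive step, suppose the relation is not minimal, and after reordering choose a minimal linearly dependent subset $[C_1],\dots,[C_s]$ with $1\le s<r$; then $[C_1],\dots,[C_s]$ span a subspace of dimension $s-1$ and satisfy a relation with all coefficients nonzero. Let $\overline{X}^{(1)}$ be obtained by contracting only $C_1,\dots,C_s$ in $X$; the curves $C_{s+1},\dots,C_r$ then lie in the smooth locus of $\overline{X}^{(1)}$ and remain disjoint rational curves of normal bundle type $(-1,-1)$. Applying the analysis of Sections 2 and 3 to $(X,C_1,\dots,C_s)$ and using Corollary~\ref{firstreduction}, I obtain a smooth fiber $X_1$ of the smoothing family of $\overline{X}^{(1)}$, with $K_{X_1}\cong\scrO_{X_1}$, for which the $\partial\bar{\partial}$-lemma holds. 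I would then verify that $X_1$ again satisfies the hypotheses of the theorem: the vanishings $H^i(X_1;\scrO_{X_1})=0$ for $i=1,2$ follow from the computation $H^i(Y_0;\scrO_{Y_0})=0$ for $i=1,2$ in Theorem~\ref{MHS1}, together with upper semicontinuity of $\chi(\scrO)$ and constancy of $h^0$ and $h^3$; and $H^0(X_1;\Omega^j_{X_1})=0$ for $j=1,2$ then follows from these vanishings, from $H^1(X_1;\Cee)=0$ and $H^2(X_1;\Cee)$ being of type $(1,1)$ (Theorems~\ref{MHS1} and \ref{MSH3}), and from the Hodge decomposition on $X_1$ supplied by the $\partial\bar{\partial}$-lemma. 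Finally, being rigid and unobstructed in the smooth locus, the curves $C_{s+1},\dots,C_r$ deform to disjoint $(-1,-1)$ curves $C_{s+1}',\dots,C_r'$ in $X_1$, and, as noted in Remark~\ref{inductiveremark} (or by tracking the classes of the relative curves under the specialization map $H^4(Y_0)\to H^4_{\mathrm{lim}}$), these still satisfy a linear relation $\sum_{i>s}m_i[C_i']=0$ with all coefficients nonzero.

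Since $r-s<r$, the induction hypothesis applied to $(X_1,C_{s+1}',\dots,C_r')$ produces a smoothing $X_2$ of $\overline{X}^{(2)}$, the contraction of $C_{s+1}',\dots,C_r'$ in $X_1$, for which the $\partial\bar{\partial}$-lemma holds. It remains to recognize $X_2$ as a smoothing of $\overline{X}$. For this I would contract the relative family of $(-1,-1)$ curves deforming $C_{s+1},\dots,C_r$ over the smoothing family of $\overline{X}^{(1)}$; the result is a deformation of $\overline{X}$, its central fiber being the contraction of $C_{s+1},\dots,C_r$ in $\overline{X}^{(1)}$, namely $\overline{X}$, and a general fiber being $\overline{X}^{(2)}$. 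Thus $\overline{X}^{(2)}$ is a small deformation of $\overline{X}$; by openness of versality, any deformation of $\overline{X}^{(2)}$ — in particular the smoothing $X_2$ — occurs as a fiber of the locally semiuniversal deformation of $\overline{X}$, so $X_2$ is a smoothing of $\overline{X}$ satisfying the $\partial\bar{\partial}$-lemma.

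I expect the delicate point to be the inductive bookkeeping of the hypotheses on $X_1$: the vanishings are handled by the mixed Hodge structure computations of Section 2 and upper semicontinuity, but one must also check that the surviving curve classes $[C_i']$ ($i>s$) still satisfy a linear relation with no vanishing coefficient — a priori, smoothing the first $s$ nodes might introduce a spurious relation or destroy a coefficient. This is controlled by the transversality of the vanishing cycles attached to $p_1,\dots,p_s$ to the given relation among the $[C_i]$, the same mechanism that makes $N\colon W_4/W_3\to W_2$ an isomorphism in Section 2. The remaining ingredients — that $(-1,-1)$ curves deform rigidly and can be contracted in families — are standard.
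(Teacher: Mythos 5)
Your proposal is correct and follows essentially the same route as the paper: reduce to the minimal-relation case handled by Corollary~\ref{firstreduction}, then induct by smoothing a minimal dependent subset of the curves first and deforming the remaining curves, whose classes satisfy $\sum_{i>s}m_i[C_i']=0$ in $H^4(X_1)\cong H^4(X)/\sum_{j\le s}\Cee[C_j]$ with the same nonzero coefficients. The extra verifications you flag (that $X_1$ inherits the cohomological vanishing hypotheses, and that the final smoothing is genuinely a smoothing of $\overline{X}$) are legitimate points the paper leaves implicit, and your proposed justifications for them are sound.
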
 
\begin{proof}  If $s$ is the smallest positive integer such that there exists a subset of the $C_i$ whose classes are linearly dependent, then, possibly after reordering the $C_i$, we can assume that there exist $n_1, \dots, n_s\in \Q$, such that $n_i\neq 0$ for all $i$, 
$$\sum_{i=1}^sn_i [C_i] =0,$$   
and  the $[C_i]$ span a subspace of $H^4(X)$ of dimension $s-1$. Let $\overline{X}_1$ be the singular threefold obtained by contracting $C_1, \dots, C_s$ and let $X_1$ be a general small smoothing  of $\overline{X}_1$. By Corollary~\ref{firstreduction}, the $\partial\bar{\partial}$-lemma holds for $X_1$. In particular, we have proved the corollary in case $r=s$, and hence in case $r=1$. Now assume the result by induction for all positive integers less than $r$ and suppose that $s< r$. The curves $C_{s+1}, \dots, C_r$ deform to disjoint smooth rational curves $C_i'$ in $X_1$. Since $H^4(X_1) \cong H^4(X)/\sum_{j=1}^s\Cee\cdot [C_j]$,   $\sum_{i= s+1}^rm_i[C_i']=0$ in $H^4(X_1)$. Let $\overline{X}_1$ be the threefold obtained by contracting $C_{s+1}', \dots, C_r'$ in $X_1$. Then $\overline{X}_1$ is smoothable, and by induction the $\partial\bar{\partial}$-lemma holds for general small smoothings $X_2$ of $\overline{X}_1$. Such a smoothing will also be a general small smoothing of $\overline{X}$, completing the proof of the theorem.  
\end{proof}

\section{Concluding remarks}

First we recall the following standard definition:

\begin{definition} Let $V_1$ and $V_2$ be two compact complex manifolds. Then $V_1$ and $V_2$ are \textsl{deformation equivalent} if there exists a proper smooth morphism $\pi \colon\mathcal{V}\to S$, where $\mathcal{V}$ and $S$ are connected analytic spaces, and two points $s_1, s_2\in S$, such that $\pi^{-1}(s_i) \cong V_i$, $i=1,2$.
\end{definition}

I am grateful to S.-T.\ Yau for calling my attention to the following question: is every compact complex manifold for which the $\partial\bar{\partial}$-lemma holds deformation equivalent to a compact complex manifold bimeromorphic to a K\"ahler manifold (also called \textsl{of class $\mathcal{C}$})?   The answer to this question is no:

\begin{proposition} A Clemens manifold is not deformation equivalent to a compact complex manifold bimeromorphic to a K\"ahler manifold.
\end{proposition}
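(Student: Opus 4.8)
The plan is to derive a contradiction from the topological simplicity of Clemens manifolds, specifically the vanishing of $b_2$. First I would recall the key topological features: a Clemens manifold $X$ arising from contracting $C_1, \dots, C_r$ in a Calabi-Yau threefold $X_0$ (with the $[C_i]$ spanning $H^4(X_0;\Cee)$ and satisfying a single linear relation) has $b_2(X) = 0$, i.e.\ $H^2(X;\Cee) = 0$, and also $H^1(X;\Cee) = 0$; moreover $K_X \cong \scrO_X$, so $H^0(X;\Omega^3_X) \neq 0$ and $b_3(X) \geq 2$. These are deformation invariants. So it suffices to show that no compact complex manifold $V$ of class $\mathcal{C}$ of dimension $3$ can simultaneously have $b_1(V) = b_2(V) = 0$ and $K_V \cong \scrO_V$ (or more weakly $H^0(V;\Omega^3_V) \neq 0$), since deformation equivalence preserves all Betti numbers and, by a theorem on deformation invariance of plurigenera / the Hodge numbers in class $\mathcal{C}$, also preserves the relevant Hodge-theoretic data.

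The heart of the argument is that a manifold of class $\mathcal{C}$ satisfies the $\partial\bar\partial$-lemma (being bimeromorphic to a compact Kähler manifold, by the standard result that the $\partial\bar\partial$-lemma is a bimeromorphic invariant among compact complex manifolds in dimension $\geq$ the relevant range, or more simply by Deligne-Griffiths-Morgan-Sullivan applied to a Kähler resolution together with the fact that a bimeromorphic modification does not destroy it). Hence $V$ carries a genuine Hodge decomposition on all its cohomology, and in particular a pure weight-two Hodge structure on $H^2(V;\Cee)$. Now I would invoke the Hodge-index-type positivity: if $V$ were bimeromorphic to a Kähler manifold $V'$, then pulling back a Kähler class gives a nonzero class in $H^2(V;\Ar)$ (it pairs nontrivially, e.g.\ its cube against the fundamental class is positive after accounting for the modification, or one uses that the pullback of a Kähler form remains a nonzero real $(1,1)$-class). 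This contradicts $b_2(V) = 0$. So the chain is: Clemens manifold has $b_2 = 0$; a class $\mathcal{C}$ threefold bimeromorphic to Kähler has $b_2 > 0$; deformation equivalence preserves $b_2$; contradiction.

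The step I expect to be the main obstacle is making precise and rigorous the claim that a compact complex threefold bimeromorphic to a Kähler manifold must have $b_2 \neq 0$ — i.e.\ that one cannot "lose all of $H^2$" under a bimeromorphic modification in a way compatible with being of class $\mathcal{C}$. The clean way is: if $f\colon \widehat{V} \to V$ is a proper bimeromorphic morphism with $\widehat{V}$ Kähler (which exists by Hironaka/desingularization of the graph of the bimeromorphic map), then $f^*\colon H^2(V;\Cee) \to H^2(\widehat{V};\Cee)$ is injective (it is a morphism of Hodge structures once the $\partial\bar\partial$-lemma is known, and it is split by $f_*$ composed with the projection formula, since $f_* f^* = \mathrm{id}$ on cohomology). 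Then the Kähler class $\kappa$ on $\widehat{V}$ need not lie in the image of $f^*$, so injectivity of $f^*$ alone does not immediately give $b_2(V) > 0$; instead one argues the other direction: $f_*\colon H^4(\widehat{V};\Cee) \to H^4(V;\Cee)$ carries $\kappa^2 \mapsto f_*(\kappa^2)$, and one shows $f_*(\kappa^2) \neq 0$ by pairing with the holomorphic $3$-form picture, or more directly that $H^{1,1}(V) \neq 0$ because the bimeromorphic invariance of the algebraic dimension / the existence of enough meromorphic functions forces a nonzero $(1,1)$-class. A cleaner route avoiding this: use that class $\mathcal{C}$ manifolds have $b_1$ even and satisfy the $\partial\bar\partial$-lemma, and a compact complex threefold in class $\mathcal{C}$ with $H^2 = 0$ would have, by $\chi(\scrO_V) = \sum (-1)^q h^{0,q}$ and $K_V \cong \scrO_V$ forcing $h^{0,0} = h^{0,3} = 1$, $h^{0,1} = h^{0,2}$; combined with $b_2 = 0$ giving $h^{1,1} = h^{0,2} = h^{2,0} = 0$, hence $h^{0,1} = 0$, so $V$ is a "complex $3$-torus-quotient"-type object with trivial $H^1, H^2$ — and then one shows such a $V$ cannot be of class $\mathcal{C}$, e.g.\ because a Moishezon or Kähler model would have to be a Calabi-Yau threefold with $b_2 \geq 1$ (a Kähler Calabi-Yau threefold always has a Kähler class, so $h^{1,1} \geq 1$), contradicting $b_2(V) = 0$ and the deformation invariance of $b_2$. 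I would present the argument in this last form, as it localizes the difficulty to the single clean statement: \emph{a compact Kähler threefold with $K \cong \scrO$ has $b_2 \geq 1$}, which is immediate.
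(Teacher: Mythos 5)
You correctly reduce the problem to the right statement (a compact complex threefold of class $\mathcal{C}$ cannot have $b_2=0$, since $b_2$ is a deformation invariant), and you even correctly flag where the difficulty sits. But your proposed resolution of that difficulty does not close the gap. Your final ``clean statement'' --- that a compact K\"ahler threefold with trivial canonical bundle has $b_2\geq 1$ --- is a statement about the K\"ahler model $V'$, not about $V$. A bimeromorphic modification changes $b_2$ (blowing up a curve in a threefold raises $b_2$ by one), so $b_2(V')\geq 1$ is automatic and in no tension whatsoever with $b_2(V)=0$; invoking ``deformation invariance of $b_2$'' here conflates deformation equivalence with bimeromorphic equivalence. (A secondary issue: $K_{V'}$ is not trivial in general, only effective, so $V'$ is not literally a Calabi--Yau threefold.) Your other routes also stall exactly where you suspect: since $b_2(V)=0$ forces $b_4(V)=0$ by Poincar\'e duality, the cohomological pushforwards $f_*\kappa\in H^2(V)$ and $f_*(\kappa^2)\in H^4(V)$ vanish automatically, so no contradiction can be extracted from them by purely cohomological pairings.

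The missing idea, which is how the paper proceeds, is to convert the positivity on $V'$ into an honest analytic cycle on $V$ and use geometric intersection multiplicities rather than abstract classes. Concretely: from $b_2(V)=0$ and the $\partial\bar\partial$-lemma for $V$ (which holds since $V$ is of class $\mathcal{C}$) one gets $h^0(V;\Omega^2_V)=h^2(V;\scrO_V)=0$; by birational invariance of $h^{0,2}$, also $h^0(V';\Omega^2_{V'})=0$, hence $H^2(V';\Cee)=H^{1,1}(V')$ and Kodaira's embedding theorem makes $V'$ \emph{projective}. Now take a very ample divisor $H$ and a curve $C$ on $V'$ meeting $H$ in a finite nonempty set disjoint from the exceptional locus of $f\colon V'\to V$. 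Then $f(H)$ is an analytic hypersurface in $V$ meeting the curve $f(C)$ in finitely many points, so $[f(H)]\cup[f(C)]>0$ by positivity of local intersection multiplicities of analytic subvarieties; in particular $[f(H)]\neq 0$ in $H^2(V;\Q)$, contradicting $b_2(V)=0$. Without the projectivity step and the cycle-theoretic intersection argument, your proof does not go through.
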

\begin{proof} Since the condition that $b_2=0$ is preserved under deformation equivalence, it suffices to show that a compact complex threefold $V$ with $b_2=0$ is not bimeromorphic to a K\"ahler manifold. Assume the contrary, that $V$ is bimeromorphic to a K\"ahler manifold $V'$. In fact, we may assume that there is a surjective degree one morphism $f\colon V'\to V$.  By \cite[(5.3)]{DeligneLefschetz} (cf.\ also \cite[(5.22)]{DGMS}), the $\partial\bar{\partial}$-lemma holds for $V$. As $b_2(V) =0$,   $h^2(V;\scrO_V) = h^0(V;\Omega^2_V) =0$. Since $h^0(V;\Omega^2_V)$ is a birational invariant, $h^0(V';\Omega^2_{V'})=0$ as well. Then $H^2(V';\Cee) = H^{1,1}(V')$, there exists a Hodge metric on $V'$ and so $V'$ is  projective. Since $f\colon V'\to V$ is birational,  there exists a hypersurface $E\subseteq V'$ such that $f(E)$ has codimension at least two and $f$ induces an isomorphism $V'-E \cong V-f(E)$. Choose an irreducible  curve $C$ on $V'$ not contained in $E$ and an irreducible   very ample divisor $H$ on $V'$ such that $H\cap C$ is finite and disjoint from $E$. Then $f(H)$ is a hypersurface in $V$ and it meets $f(C)$ at a finite and nonempty set of points. Then $[f(H)] \cup [f(C)]>0$, so that $[f(H)]$ is a nonzero element of $H^2(V;\Q)$. This contradicts the assumption  that $b_2(V) = 0$.
\end{proof} 

\begin{remark} Let $\pi\colon \mathcal{X} \to \Delta$ be a degeneration of compact complex manifolds $X_t, t\neq 0$, to a singular $X_0$. Under very general hypotheses, the arguments of Theorem~\ref{MHS2} will show that, for $t\in \Delta^*$ small, the Hodge-de Rham spectral sequence for $X_t$ degenerates at $E_1$. For example, if all components of $X_0$  are bimeromorphic to K\"ahler manifolds, or   if $X_0$ has normal crossings and all  $k$-fold intersections $X_0^{[k]}$ satisfy the $\partial\bar{\partial}$-lemma, then the Hodge-de Rham spectral sequence for $X_t$ degenerates at $E_1$ for $t$ small and $\neq 0$. On the other hand, it is easy to find examples for which the $\partial\bar{\partial}$-lemma does not hold for $X_t$, $t\neq 0$. For example, let $X_0$ be the singular surface which is obtained by gluing the negative section $\sigma_0$ of the rational ruled surface $\mathbb{F}_n$ to a disjoint section $\sigma$ by some choice of isomorphism. Note that $\sigma_0^2 = -n$ and $\sigma^2=n$, so that $X_0$ is $d$-semistable in the sense of \cite{Friedman1983}. Kodaira has shown \cite{Kodaira} that, if $n\neq 0$, then there is a degeneration $\pi \colon \mathcal{X} \to \Delta$, such that $\pi^{-1}(0) \cong X_0$ and, for $t\neq 0$, $X_t =\pi^{-1}(t)$ is a  Hopf surface. Then the Hodge-de Rham spectral sequence for $X_t$ degenerates at $E_1$, but the $\partial\bar{\partial}$-lemma does not hold for any compact complex surface deformation equivalent to $X_t$.
\end{remark}

\end{document}